\documentclass[french]{article} 
\usepackage[utf8]{inputenc}
\usepackage[T1]{fontenc}
\usepackage{lmodern}
\usepackage[a4paper,vmargin={3.4cm,3.4cm},hmargin={2.7cm,2.7cm}]{geometry}

\usepackage[english]{babel}

\usepackage{fullpage}

\usepackage{amssymb,amsmath,mathtools,amsthm,dsfont,stmaryrd, bm}
\usepackage{graphicx}

\usepackage[all]{xy}
\usepackage{xcolor}
\usepackage{todonotes}

\newcommand{\zi}{z}

\newtheorem{theorem}{Theorem}
\newtheorem{prop}[theorem]{Proposition}
\newtheorem{corollary}[theorem]{Corollary}
\newtheorem{lemma}[theorem]{Lemma}

\newtheorem{remark}[theorem]{Remark}
\newtheorem{defi}[theorem]{Definition}

\newcommand{\E}{\mathbb{E}}

\newcommand{\N}{\mathbb{N}}

\renewcommand{\P}{\mathbb{P}}

\newcommand{\T}{\mathcal{T}}
\newcommand{\Tf}{T}

\newcommand{\Z}{\mathbb{Z}}
\renewcommand{\dir}{\mathop{dir}}

\renewcommand{\epsilon}{\varepsilon}
\renewcommand{\phi}{\varphi}

\newcounter{numeroexo}

\begin{document}

\title{\LARGE Random sequential nearest-neighbor coloring on trees}

\author{Anne-Laure Basdevant\footnote{LPSM, Sorbonne Universit\'e, France, {\it anne.laure.basdevant@normalesup.org}}, Arvind Singh\footnote{CNRS and LMO, Universit\'e Paris-Saclay, France
 {\it arvind.singh@universite-paris-saclay.fr}}}
\date{\today}

\selectlanguage{english}

\maketitle

\vspace{-0.5cm}

\begin{figure}[h!]
    \centering
\includegraphics[width=11cm]{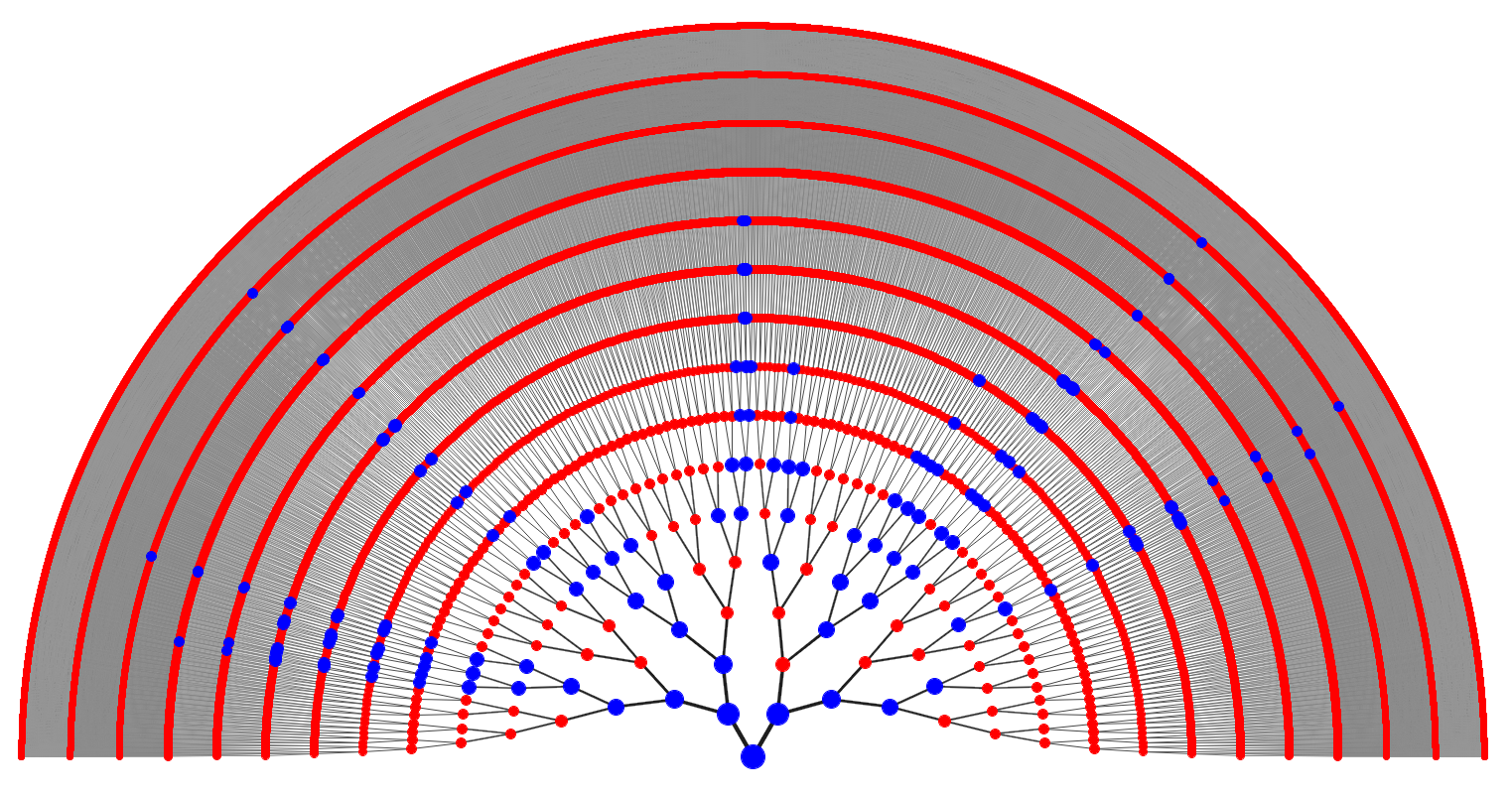}
\caption{\small{Simulation of the coloring process on a binary tree of height $15$. All the leaves are seeds initially colored in red while the root is the seed of the blue cluster.}} \label{fig:abstract}
\end{figure}

\begin{abstract}
We study a nearest-neighbor coloring process in which vertices are revealed in random order and inherit the color of the closest vertex revealed before them. This model is a discrete analogue of coloring processes previously studied by Preater \cite{Preater2009} and Aldous \cite{Aldous18} in Euclidean spaces. We focus here on regular trees and analyze the associated genealogy of color inheritance. In contrast with the Euclidean case, the genealogical graph on an infinite regular tree is not connected: it has infinitely many infinite one-ended components, each with a distinct asymptotic direction, while every vertex has only finitely many descendants. We also describe how this structure is modified in the presence of finitely many initial seeds. Finally, we study local limits of the coloring on finite regular trees as their height tends to infinity, for two natural seed configurations: two fixed seeds, and one blue seed at the root with red seeds at the leaves.
\end{abstract}

\section{Introduction}

We study a random coloring procedure on graphs, in which vertices are colored sequentially at random, each vertex taking, at the time it is picked, the color of one of its closest already colored vertices. This model can be interpreted as a growth process driven by local competition between different species (\emph{i.e.}, colors). Alternatively, it may also be interpreted as a variant of the classical voter model where new settlers take the political opinion of one of their already settled neighbors.
\medskip

Sequential nearest-neighbor rules of this kind have appeared in two closely related forms. In the uncolored version, each arriving point is linked to its closest predecessor, giving the on-line nearest-neighbor graph, or random nearest-neighbor tree, whose geometry has been studied in several Euclidean settings \cite{PenroseWade2008,PenroseWade2009,LichevMitsche2024,Casse2025}. In the colored version, introduced by Preater and Aldous and further studied in \cite{Preater2009,Aldous18,BasdevantBlancCurienSingh}, the focus is instead on the random partition generated by color inheritance. More precisely, in the latter Euclidean model, the authors consider the unit cube $[-1,1]^d$. Initially, the origin is colored in blue and the boundary of the cube in red. Subsequently, independent, uniformly sampled random points fall in $[-1,1]^d$ and, upon arrival, each point takes the color of the nearest point that has appeared so far. This procedure ultimately creates a random coloring of the unit cube\footnote{This construction produces only a countable set of colored sample points but can be subsequently extended to a coloring of the whole cube by assigning to each location the color of the points in its infinitesimal neighborhood (with an additional ``gray'' interface corresponding to accumulation points from both colors).}. One of the main results in \cite{Preater2009, BasdevantBlancCurienSingh} is that there exists a.s. an open region containing the origin that is fully blue, \emph{i.e.}, all red vertices remain at a positive distance from the initial blue vertex. On the other hand, the converse question of whether there may exist blue vertices arbitrarily close to the red boundary with positive probability is more delicate and still remains open.

\medskip
In contrast with these Euclidean works, the present paper deals with a discrete setting in which the underlying graph is a regular tree. This change of geometry leads to rather different phenomena and makes the genealogical structure of the process a natural object of study. Before specializing to trees, let us first describe the model on an arbitrary finite connected graph $G=(V,E)$. Given a set of initially colored vertices called \emph{seeds} and a uniformly random permutation of the non-seed vertices (which encodes the order in which the vertices are revealed), the coloring procedure is defined as follows. When a vertex $v$ is revealed, it takes the color of one of the already colored vertices that minimize the graph distance to $v$; if several such vertices exist, one chooses uniformly among them. At the end of the procedure, all vertices are colored.

\medskip

The same model can equivalently be formulated in terms of arrival times, in a way that naturally extends to infinite graphs: one assigns to each vertex $v$ a time $\tau_v \geq 0$, interpreted as its arrival time, and reveals the vertices in increasing order of these times. On a finite graph, if the variables $(\tau_v)_{v\in V}$ are i.i.d.\ with a continuous distribution, then their increasing order induces a uniform random permutation of the vertices and we recover the previous construction of the coloring. On an infinite graph, there is in general no way to reveal the vertices one after another in increasing order of their arrival times, since the collection $(\tau_v)$ need not admit a smallest element. Nevertheless, the variables $(\tau_v)$ still encode the relative order of arrival of the vertices and make it possible to reconstruct the genealogy by a local backward exploration. Vertices $s$ with arrival time $\tau_s = 0$ are declared to be seeds, have no ancestor, and are assigned a fixed color. If $v$ has positive arrival time, we write $d(\cdot,\cdot)$ for the graph distance and set
\begin{equation}\label{eq:radius}
r(v):=\min\bigl\{d(v,w): w\in V,\, \tau_w<\tau_v\bigr\}.
\end{equation}
The parent of $v$ is defined as the vertex at distance $r(v)$ with an arrival time strictly smaller than $\tau_v$ (chosen uniformly at random if there is more than one). Iterating this parent relation yields an ancestral line for each vertex and, in turn, a genealogical forest $\mathcal{G}$ on the set of vertices $V$. Each connected component of $\mathcal{G}$ is a tree with all its vertices of the same color.

\medskip

In the present paper, we focus on the case where the graph $G$ is a regular tree. Our main purpose is to study the genealogy generated by the coloring procedure. In the infinite-volume setting, we show that this genealogy is a forest with infinitely many connected components and investigate its geometric structure. We then return to finite regular trees and derive from the infinite-volume picture several consequences for the corresponding finite-volume colorings and their local limits.

\subsection{Main results}

We now introduce some notation and state the main results of the paper.

\subsubsection{Structure of the genealogical graph on a regular infinite tree.}

We consider the infinite regular tree $\T$ in which every vertex has $K+1$ neighbors, where $K \ge 2$ is fixed. We fix a vertex $o \in \T$, which we call the root. For $u,v \in \T$, we write $d(u,v)$ for the graph distance between $u$ and $v$, and $|v| := d(o,v)$ for the height of the vertex $v$. For $v\in \T$ and $r\in \N$, we denote by $B(v,r)$ (resp.\ $S(v,r)$) the ball (resp.\ sphere) of radius $r$ and center $v$.

\medskip

We denote by $\partial \T$ the boundary at infinity of $\T$, that is, the set of infinite rays\footnote{We call a ray a geodesic starting from the root, \emph{i.e.}, a non-backtracking path $o = v_0, v_1,v_2, \ldots$ where $v_{i+1}$ is a neighbor of $v_i$ and $v_{i+1} \neq v_{i-1}$.} starting from the root. Finally, for $v \in \T$, we denote by $\T_v$ the descendant subtree rooted at $v$, namely the induced subtree consisting of all vertices $w \in \T$ such that the geodesic path from $o$ to $w$ passes through $v$.

\medskip

We first consider the genealogical graph $\mathcal{G}$ associated with the coloring process on $\T$ without seeds. More precisely, we fix an i.i.d.\ sequence of arrival times $(\tau_v,\; v \in \T)$ sampled according to a continuous distribution\footnote{The choice of the distribution does not matter provided it has no atoms. In practice, it will be convenient to choose either a uniform or exponential distribution.} on $(0,\infty)$. Therefore, all vertices have a positive arrival time (there are no initial seeds) and, for each vertex $v$, we can define its parent $p(v)$ by the exploration procedure explained in \eqref{eq:radius}. This defines an oriented graph $\mathcal{G}$ on $\T$, called the \emph{genealogical graph}, whose oriented edges are of the form $(v,p(v))$. We call its connected components the \emph{genealogical components}. By construction, every connected component of $\mathcal G$ is necessarily an oriented tree\footnote{By convention, edges in the genealogical graph are always oriented from a vertex to its parent.}.
\medskip

We stress that $\mathcal{G}$ has the same vertex set as $\T$ but is not compatible with the graph structure of $\T$ since $p(v)$ is not, in general, a neighbor of $v$. In particular, the genealogical components are trees but are not subtrees of $\T$. 

\medskip

For each vertex $v \in \T$, we denote by
\begin{equation}\label{eq:def_path}
    \Gamma(v) := (v,p(v),p^{(2)}(v),\dots)
\end{equation}
its ancestral path, where $p^{(k)}(v)$ is the $k$-th iterate of the parent map. We observe that two vertices $u$ and $v$ belong to the same component if and only if their ancestral paths eventually coalesce, i.e.\ there exist integers $m,n \ge 0$ such that
\[
p^{(m+k)}(u)=p^{(n+k)}(v)\qquad \text{for all }k\ge 0.
\]
For $v \in \T$, we also define the set of all its descendants:
\[
D(v):=\{w\in \T:\; v\in \Gamma(w)\}.
\]

The following theorem describes the structure of $\mathcal{G}$.

\begin{theorem}[\textbf{The genealogical graph without seeds}]\label{thm:GwithoutSeeds}
Consider the coloring process on $\T$ without seeds and let $\mathcal G$ denote the associated genealogical graph. Then the following assertions hold almost surely:
\begin{enumerate}
\item The genealogical graph $\mathcal G$ has infinitely many connected components. Furthermore, denoting by $N_n$ the number of connected components of $\mathcal G$ intersecting the sphere $S(o,n)$, there exists a constant $c>0$ (depending only on $K$) such that
    $$\liminf_{n\to \infty} \frac{N_n}{\sharp S(o,n)}\ge c \qquad \mbox{ a.s.}.$$
    \item Every connected component of $\mathcal G$ is an infinite one-ended oriented tree, i.e.\ for every $v \in \T$, its ancestral path $\Gamma(v)$ is infinite while its set of descendants $D(v)$ is finite and satisfies
    \[
    \E[\sharp D(v) \mid \tau_v]<\infty \qquad \mbox{ a.s.}.
    \]

    \item Every connected component $\mathcal C$ of $\mathcal G$ admits a limit direction, i.e.\ there exists an infinite ray $\xi=(w_i)_{i\ge 1}\in \partial \T$ such that, for every vertex $v$ in the component and every $i \ge 1$, the vertices of $\Gamma(v)$ eventually belong to the subtree $\T_{w_i}$. We denote this limit by
    \[
    \dir \mathcal C \in \partial \T.
    \]

    \item For every vertex $v \in \T$, there exists, almost surely, a connected component $\mathcal C$ such that $\dir \mathcal C \in \partial \T_v$. 

    \item If $\mathcal C,\mathcal C'$ are two distinct connected components of $\mathcal G$, then
    \[
    \dir \mathcal C \neq \dir \mathcal C' \qquad \text{a.s.}.
    \]
    
\end{enumerate}
\end{theorem}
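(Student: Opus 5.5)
Our plan is to start from one structural fact and then prove the five items in order. The fact is that along an ancestral path the arrival times strictly decrease, $\tau_{p(v)}<\tau_v$, and the radius $r(p(v))$ is typically much larger than $r(v)$. Indeed, $r(v)$ is governed by the fact that all of $B(v,r(v)-1)\setminus\{v\}$ arrives after $v$. This ball has size of order $K^{r(v)}$, so $\P(r(v)\ge r\mid\tau_v=t)\approx (1-F(t))^{cK^{r}}$ with $F$ the common distribution function. We take $\tau$ uniform on $(0,1)$, so small $t$ yields radius of order $\log_K(1/t)$.

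\textbf{Item (2).} The path $\Gamma(v)$ is infinite because every vertex of positive arrival time has a parent: among infinitely many i.i.d. times there is a.s. a smaller one at finite distance. For finiteness of $D(v)$, we bound $\E[\sharp D(v)\mid\tau_v]$ by summing over $w$ the probability that $v\in\Gamma(w)$. The key estimate is a first-generation bound. A vertex $w$ at distance $\ell$ from $v$ has $p(w)=v$ only if the ball $B(w,\ell-1)$ contains no vertex older than $w$ and $\tau_w>\tau_v$. By the tree geometry, this ball contains the part of $B(v,\ell-1)$ "behind" $w$, of size of order $K^{\ell}$. Integrating over $\tau_w\in(\tau_v,1)$, the probability is at most about $\tau_v^{-1}K^{-\ell}$ times a constant, while $\sharp S(v,\ell)\approx K^\ell$. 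This naive sum diverges only logarithmically. We therefore use the sharper fact that $w$ must also be younger than all of a ball of size $K^{\ell}$ centred near $w$, which forces $\tau_w$ small. Conditioning on $\tau_w\le u$, we get $\P\le \int (1-s)^{cK^{\ell}}ds\sim K^{-\ell}$, still constrained by $\tau_w>\tau_v$. This gives a factor $e^{-c\tau_vK^\ell}$, and hence $\E[\sharp\{w:p(w)=v\}\mid\tau_v]\le C\log(1/\tau_v)$ roughly. Iterating, descendants of generation $k$ have times decreasing to $0$ from above $\tau_v$, which is impossible. More carefully, the child times lie in $(\tau_v,1)$ and children of children lie above those, so a branching bound with the offspring mean integrated against times yields a convergent sum. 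This is the main obstacle. To make the iteration summable we would try the time-weighted functional $\phi(t)=\E[\sharp D(v)\mid\tau_v=t]$ and prove a Volterra-type inequality $\phi(t)\le 1+\int_t^1 m(t,s)\phi(s)\,ds$ with integrable kernel; this gives finiteness by Gronwall.

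\textbf{Item (3).} For the limit direction, we show that $|p^{(k)}(v)|\to\infty$. We also show that successive jumps $r(p^{(k)}(v))$ grow, say geometrically, because $\tau$ decreases to $0$ along $\Gamma(v)$ (if it stayed above some $\epsilon>0$, radii would be bounded and infinitely many distinct vertices with times above $\epsilon$ would be found nearby, a contradiction). Since on a tree a jump of length $r$ from a point at height $h$ can only retract the direction by a controlled amount, the confluents $p^{(k)}(v)\wedge p^{(k+1)}(v)$ converge to the boundary. Comparing $d(p^{(k)},p^{(k+1)})=r_k$ with $|p^{(k)}|$ requires a Borel–Cantelli estimate: the probability that the next ancestor lies "backwards", toward $o$ beyond the current geodesic, is small. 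That probability is at most the ratio of the backward ball volume to the full ball volume, roughly $K^{-(\text{height gain})}$. The direction is then common to the whole component because paths coalesce.

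\textbf{Items (4), (5) and (1).} For (4), we take $w\in\T_v$ deep in $\T_v$ with the minimal time in a large ball inside $\T_v$. With positive probability uniformly in depth, its ancestral line never leaves $\T_v$. Independence across disjoint deep balls then gives a.s. existence. For (5), two components with the same direction would have ancestral lines in the same shrinking subtrees with times tending to $0$. Near the ray, the oldest vertex in a large ball attracts both lines, so the lines coalesce with probability tending to one; a 0-1 argument gives a.s. For (1), we count vertices $w\in S(o,n)$ whose descendant subtree $\T_w$ has, at depth $m$, a component living entirely in $\T_w$. By (4) and a finite-range approximation using (2), this has probability $\ge c$ for each $w$, with weak dependence. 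The law of large numbers over the $\approx K^n$ disjoint subtrees then gives the liminf.
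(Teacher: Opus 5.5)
Your outline names the right quantities, but the step you yourself flag as the main obstacle (Item 2) does not go through as proposed. The Volterra inequality $\phi(t)\le 1+\int_t^1 m(t,s)\phi(s)\,ds$ presupposes that, conditionally on $\{p(w)=v,\ \tau_w=s\}$, the expected number of descendants of $w$ is at most $\phi(s)$. This fails. The event $\{p(w)=v\}$ forces every vertex of $B(w,d(v,w)-1)$ to arrive after $w$, so it creates an empty region around $w$. That is exactly what favours children of $w$, since a vertex $x$ can be a child of $w$ only if $B(x,d(x,w)-1)$ contains no vertex arrived before $x$. Along a genealogical chain these balls overlap heavily, so successive generations are not a branching process and your first-generation estimate does not iterate.

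The paper never recurses over generations. It bounds $\P(v\in D(u)\mid\tau_u)$ by a union bound over chains. From any chain $u=x_0,\dots,x_n=v$ it extracts a subsequence $x_{i_1},\dots,x_{i_m}$ and radii $r_1,\dots,r_m$ such that the balls $B(x_{i_j},r_j-1)$ are pairwise disjoint, contain no vertex arrived before $u$, and still satisfy $2\sum_j r_j\ge d(u,v)$ (Lemma~\ref{lemm:descendance1}). Disjointness yields the factor $(1-\tau_u)^{\sum_j K^{r_j-1}-m}$, the ordering $\tau_{x_{i_1}}<\dots<\tau_{x_{i_m}}$ yields $1/m!$, and there are at most $K^{\sum_j r_j}$ such configurations. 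This decoupling device is the missing idea.

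The remaining items have similar holes.

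\textbf{Items 1 and 4.} You need events of uniformly positive probability that are measurable with respect to the clocks and indexings in disjoint subtrees. Neither ``$w$ is the oldest vertex of a large ball and $\Gamma(w)$ never leaves $\T_v$'' nor ``$\T_w$ contains a component living entirely in $\T_w$'' is of this type. The exploration of $\Gamma(w)$ may inspect clocks outside $\T_w$ even if the path never leaves it, and Item 2 says nothing about the range of the exploration. So your ``independence across disjoint balls'', ``weak dependence'' and ``finite-range approximation'' are unsupported. Besides, a component contained in $\T_w$ need not meet $S(o,n)$.

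The paper instead counts $w\in S(o,n)$ with $\overleftarrow{w}\notin W_\infty(w)$. This event forces $\Gamma(w)\subset\T_w$, so distinct such $w$ lie in distinct components meeting $S(o,n)$. It depends only on data inside $\T_w$, because explored sets are connected. It has probability $c>0$ by Proposition~\ref{prop:keyresult}, whose proof forces the first $n_0$ steps deep into a sibling branch, then imposes $2^n\le I_n\le 3^n$ and uses the forward bias of Proposition~\ref{prop:V_nconditionnelle}. That argument is exactly what your ``positive probability uniformly in depth'' omits.

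\textbf{Item 5.} The claim that ``the oldest vertex of a ball attracts both lines, so they coalesce with probability tending to one'' is not substantiated, and ``same direction'' is a tail event you cannot condition on. What is needed is a uniform lower bound, at every step of a joint exploration of both lines with a Markov structure, on the conditional probability that they coalesce or settle in disjoint sectors forever, followed by a restart argument. The paper obtains this from the lower $2$-record encoding and a comparison with a positively drifted random walk.

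\textbf{Item 3.} The radii grow linearly ($R_n\sim n/\log K$), not geometrically, and $\tau\to 0$ alone gives no rate. Your Borel--Cantelli step needs $\sum_n K^{-\varepsilon R_n/2}<\infty$, which comes from $\log I_n\sim n$ for the record inter-arrival times. The backward-ratio bound also needs $\T_{V_n}\cap W_n=\{V_n\}$ to bound the denominator below by $K^{R_{n+1}}$.
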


\newpage

\begin{remark} 
\begin{enumerate}
    \item The fact that the genealogical graph $\mathcal{G}$ has infinitely many connected components is related to the exponential growth of the underlying tree $\T$. If one considers instead the lattice $G = \mathbf{Z}^d$ for $d\geq 2$ as the base graph, then the associated genealogical graph is a.s. connected. This result can be proved using arguments similar to those used by Aldous to study the coalescence time for the "coupled EA process" (Section~$3$ of \cite{Aldous18}). The arguments of \cite{Aldous18} are for the continuous Poissonian model in the plane but can be extended, with some adjustments, to the lattice case for any dimension $d\geq 2$.
    \item Here, we are working with a base graph $\T$ that is a tree, but the acyclic nature of the graph does not seem fundamental. We believe that the description of $\mathcal{G}$ given in Theorem \ref{thm:GwithoutSeeds} should remain true for more general vertex-transitive graphs with exponential growth. We also expect that Theorem \ref{thm:GwithoutSeeds} should admit a continuous analogue for the Poissonian coloring process in hyperbolic space. This is the subject of work in progress (see Figure \ref{fig:hyper} for a simulation of the coloring process on the Poincaré hyperbolic disk).
\end{enumerate}
\end{remark}

\begin{figure}
\centering
\includegraphics[height=9cm]{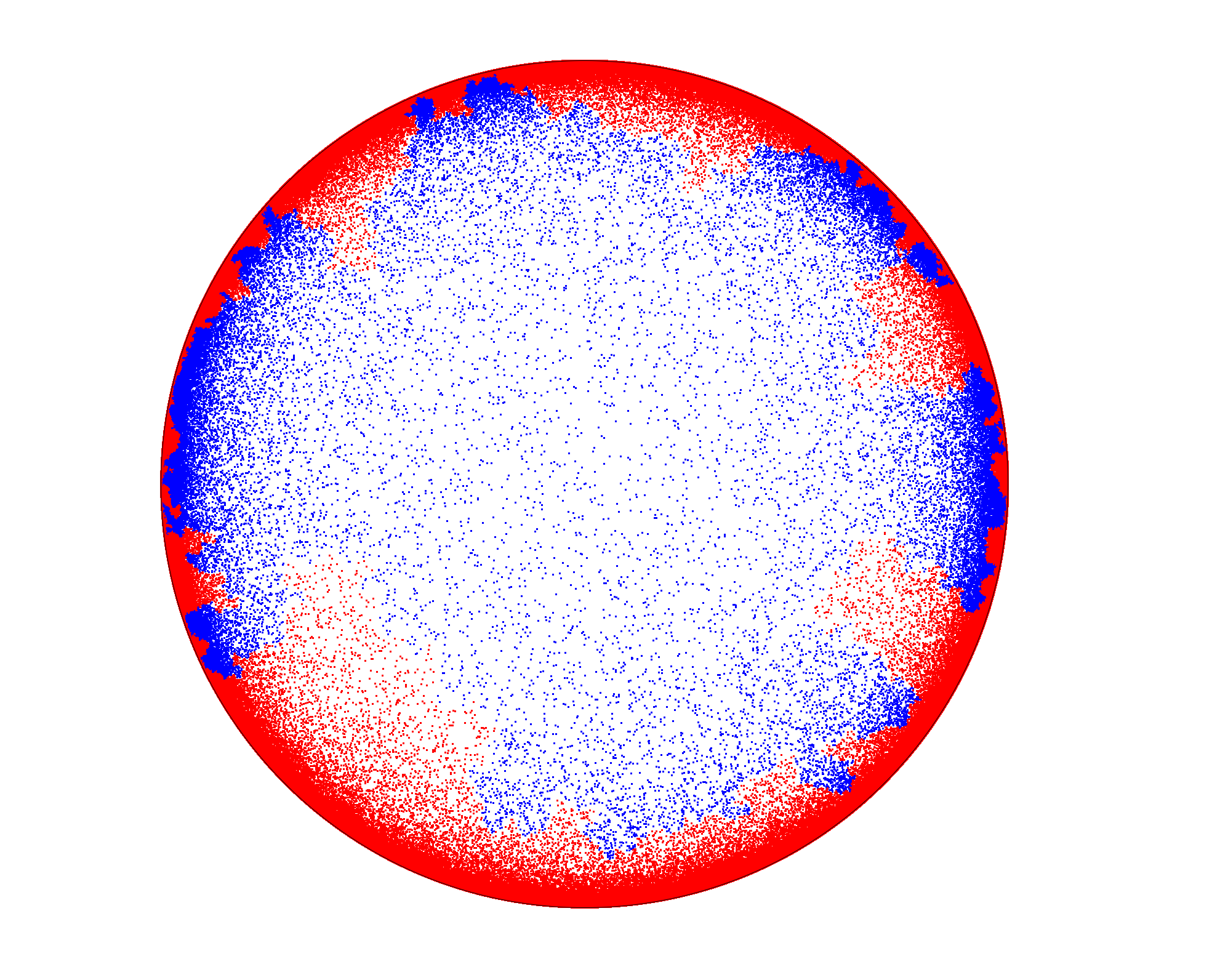}
\caption{\small{Coloring on the Poincaré hyperbolic disk starting from a single blue point at the origin and a red ideal boundary. Simulation for $10^7$ points.}} \label{fig:hyper}
\end{figure}

\begin{theorem}[\textbf{The genealogical graph with a finite number of seeds}]\label{thm:GwithSeeds}
Let $\mathcal G$ denote the genealogical graph of the coloring process on $\T$ without seeds. Write the decomposition of $\mathcal G$ into its connected components as:
$$
\mathcal G = \bigsqcup_{i \in I} \mathcal{C}_i.
$$
Fix $M\geq 1$ and vertices $\tilde{s}_1, \ldots \tilde{s}_M \in \T$ chosen as seeds. Let $\tilde{\mathcal{G}}$ denote the genealogical graph constructed\footnote{In this coupled construction of $\mathcal G$ and $ \tilde{\mathcal{G}}$, we also require ties (when there are multiple possible ancestors) to be broken in the same way, see Section \ref{sec:constructionW} for more details.} with the same i.i.d.\ sequence of arrival times $(\tau_v,\, v\in \T)$ except that now $\tau_{\tilde{s}_j}=0$ for all $1\le j\le M$. Then the decomposition of $\tilde{\mathcal{G}}$ into connected components satisfies
$$
\tilde{\mathcal G} = \bigsqcup_{i \in I} \tilde{\mathcal{C}}_i \sqcup \bigsqcup_{j = 1}^M\tilde{\mathcal{U}}_j,
$$
where 
\begin{enumerate}
    \item For each $i\in I$, $\tilde{\mathcal{C}}_i$ is an infinite oriented subtree of $\mathcal{C}_i$. In particular, we have $\dir \tilde{\mathcal{C}}_i = \dir \mathcal{C}_i$.

    \item For each $j\in \{1,\ldots, M\}$, $\tilde{\mathcal{U}}_j$ is an oriented tree rooted at $\tilde{s}_j$. Moreover, 
    \begin{enumerate}
        \item For $v \in \tilde{\mathcal{U}}_j$, $v \neq \tilde{s}_j$, the set $\tilde{D}(v)$ of descendants of $v$ in $\tilde{\mathcal{U}}_j$ is finite with $\E[\sharp\tilde{D}(v) \mid \tau_v] < \infty$.
\item The component $\tilde{\mathcal{U}}_j$ is infinite if and only if $\tilde{s}_j$ has infinite degree. Furthermore, we have 
$$
\P(\mbox{$\tilde{s}_j$ has infinite degree in $\tilde{\mathcal{U}}_j$})=
\begin{cases}
    1 & \mbox{if $\{v\in \T, d(v,\tilde{s}_j)=\min_{i\le M} d(v,\tilde{s}_i)\}$ is infinite}\\
    0 & \mbox{otherwise}
\end{cases}.
$$
\end{enumerate}    
\end{enumerate}
\end{theorem}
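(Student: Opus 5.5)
Our plan is to compare the two genealogies vertex by vertex through the coupling. Say $v$ is \emph{affected} if $p(v)\neq \tilde p(v)$, where $\tilde p$ is the parent in $\tilde{\mathcal G}$. Setting $\tau_{\tilde s_j}=0$ only lowers arrival times, so $\tilde r(v)\le r(v)$. Hence $v$ is affected only if some seed $\tilde s_j$ with $\tau_{\tilde s_j}>\tau_v$ satisfies $d(v,\tilde s_j)\le r(v)$, i.e.\ some seed lies in the exploration ball of $v$ and arrived after $v$ in the unseeded process; in that case $\tilde p(v)$ is a seed. (Ties are broken by the coupled rule, so unaffected vertices keep the same parent.)

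So every non-seed vertex either has the same parent in both graphs or has a seed as parent in $\tilde{\mathcal G}$. Following the ancestral path $\tilde\Gamma(v)$, it agrees with $\Gamma(v)$ until it first reaches a seed, either directly or through an affected vertex, and then stops there. This gives the dichotomy. Let $\tilde{\mathcal U}_j$ be the set of $v$ whose $\tilde\Gamma(v)$ ends at $\tilde s_j$. For $\tilde{\mathcal C}_i$, take the vertices of $\mathcal C_i$ whose ancestral path never meets a seed or an affected vertex.

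Point 1 reduces to showing that only finitely many vertices are affected, or at least that each $\Gamma(v)$ meets affected vertices or seeds only finitely often. The ancestral path of any $v\in\mathcal C_i$ eventually follows the infinite tail of the component and has direction $\dir\mathcal C_i$. It should then eventually avoid the finite set of vertices $w$ with $d(w,\tilde s_j)\le r(w)$ and $\tau_w<\tau_{\tilde s_j}$. To prove this set is a.s.\ finite, we compute $\sum_w \P(r(w)\ge d(w,\tilde s_j),\ \tau_w<\tau_{\tilde s_j})$. With uniform times, the probability that a ball of radius $n$ around $w$ contains no earlier vertex is about $\E[(1-\tau_w)^{\sharp B(w,n)}]\approx K^{-n}$ up to constants. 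There are about $K^n$ vertices at distance $n$ from $\tilde s_j$, so this naive sum diverges logarithmically. The extra condition $\tau_w<\tau_{\tilde s_j}$ costs nothing, so we need more care. We will instead use Theorem~\ref{thm:GwithoutSeeds}(2): the affected $w$ are exactly those $w$ for which $\tilde s_j$ is a candidate in a deeper sense. An alternative route is to note that any tail vertex of $\Gamma$ far from the seeds has a small radius compared to its distance to the seeds, since ancestral radii along $\Gamma$ should grow at most linearly while distances diverge. Then the tail is not infinite-to-affected and $\tilde{\mathcal C}_i$ is infinite with the same direction. I expect this finiteness or tail-avoidance step to be the main obstacle.

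For 2(a), a descendant of $v\ne\tilde s_j$ in $\tilde{\mathcal U}_j$ is a vertex whose path reaches $v$ before any seed. Such a path coincides with the unseeded path up to $v$, unless it passes through an affected vertex, which would go directly to a seed rather than to $v$. Hence $\tilde D(v)\subset D(v)$, and Theorem~\ref{thm:GwithoutSeeds}(2) gives finiteness with the conditional expectation bound. For 2(b), since all other descendant sets are finite, $\tilde{\mathcal U}_j$ is infinite iff $\tilde s_j$ has infinitely many children. If the Voronoi cell $V_j$ of $\tilde s_j$ is finite, vertices far away have a nearer seed or nearer vertex, so only finitely many can choose $\tilde s_j$; we show this by Borel--Cantelli, using that choosing $\tilde s_j$ requires an empty ball of radius at least the distance. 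If $V_j$ is infinite, it contains a ray, and along it infinitely many independent events occur. Each such event is a vertex $w$ at distance $n$ whose ball of radius $n-1$ minus the seeds arrives after $w$, and which picks $\tilde s_j$ among the tied seeds. These events have probability of order $K^{-n}$, and with the roughly $K^n$ cell vertices at distance $n$ in a subtree-cone their expected count diverges. A second-moment or disjoint-cone argument combined with Kolmogorov's 0-1 law then gives probability one.
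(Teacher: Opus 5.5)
Your coupling dichotomy, the inclusion $\tilde D(v)\subset D(v)$ giving 2(a), and the reduction of 2(b) to the degree of $\tilde s_j$ match the paper. The genuine gap is the probability-one claim when the cell $\T'=\{y:\ d(y,\tilde s_j)=\min_i d(y,\tilde s_i)\}$ is infinite.

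A divergent expected number of children does not give infinitely many children, and the tools you invoke do not apply as stated. Take $\T_{o'}\subset\T'$ with $\tilde s_j\notin\T_{o'}$ and $h=d(o',\tilde s_j)$. For every $w\in\T_{o'}$, the ball $B(w,d(w,\tilde s_j)-1)$ contains the fixed ball $B(o',h-1)$. So all events $\{\tilde p(w)=\tilde s_j\}$ share the clocks of this region. Events in disjoint cones are therefore not independent, and ``$\tilde s_j$ has infinitely many children'' is not a tail event of the clocks without further argument. Moreover, with radius $n-1$ your event does not force the parent to be a seed. Non-seed vertices at distance exactly $n$ that arrive before $w$ compete in the tie-break, so you need the closed ball.

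The missing idea in the paper is a decoupling. Let $u_n$ be the earliest vertex of $\T_{o'}$ at distance $n$ from $o'$.
\begin{itemize}
\item The event $\mathcal E_n$ that all vertices of $B(u_n,n+h)$ with $d(\cdot,o')\ge n/3$ arrive after $u_n$ has probability at least $1/(1+(K+1)K^h)$. It depends only on the shell $\{n/3\le d(o',\cdot)<3n\}$, so the events $\mathcal E_{9^k}$ are independent and Borel--Cantelli II gives infinitely many of them.
\item The inner region $\{d(\cdot,o')<n/3\}$ has only $O(K^{n/3})$ clocks, while $\tau_{u_n}$ is a minimum of $K^n$ clocks. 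By Borel--Cantelli I, the inner region arrives after $u_n$ for all large $n$.
\item Hence $\tilde p(u_n)\in B(o',h)$ infinitely often. Since $\tau_{u_n}\to 0$, this parent is eventually a seed at distance $n+h$. The independent tie-breaking indexings at the distinct $u_n$ then select $\tilde s_j$ infinitely often.
\end{itemize}
A route closer to yours can also work. A per-level bound $\P(\tilde p(u_n)\in S)\ge c_0$ gives $\P(\limsup)\ge c_0$. Upgrading this to probability one, however, requires actually proving a 0-1 law. For example, show that the arrival times of children tend to $0$, so that modifying finitely many clocks changes only finitely many children.

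For Point 1, drop your first route: the set of affected vertices is a.s.\ infinite, not finite. A seed with an infinite cell has infinitely many children, and their arrival times tend to $0$. So all but finitely many satisfy $\tau_w<\tau_{\tilde s_j}$ (the seed's clock in the unseeded process) and are affected. Your divergent sum reflects this, and the appeal to Theorem~\ref{thm:GwithoutSeeds}(2) does not repair it.

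Your alternative route is the paper's argument, and it is not an obstacle. By Corollary~\ref{coro:V_nenn2}, transported to $\Gamma(v)$ by transitivity, $R_{n+1}=O(n)$ while $d(V_n,S)$ grows like $n^2$. So $S\cap\bigcup_{n\ge N}B(V_n,R_{n+1})=\emptyset$ for large $N$. The whole exploration of $\Gamma(V_N)$ lies in this union, so $V_N,V_{N+1},\ldots$ are unaffected non-seeds and belong to $\tilde{\mathcal C}_i$. Mere divergence of $d(V_n,S)$ would not suffice; it is the quadratic-versus-linear comparison that does the work.

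Finally, the finite-cell case needs no Borel--Cantelli, and the empty-ball bound alone would not be summable. A vertex outside the cell has a strictly closer seed, whose arrival time is $0<\tau_v$. Hence the children of $\tilde s_j$ lie in the cell deterministically.
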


\begin{remark} 
We note that if there are $M\le K+1$ seeds on a $(K+1)$-regular tree, then necessarily all the seeds have infinite degree in the genealogical graph. 
\end{remark}

\subsubsection{Coloring on finite regular trees and local limits}

In the second part of this paper, we describe the coloring process on large finite regular trees and study its limit as the height of the tree tends to infinity. For $\ell \geq 0$, let
$$
\T^{(\ell)} := \T \cap B(o,\ell)
$$
denote the finite regular tree of height $\ell$.

\medskip

\noindent\textbf{Coloring with seeds at the root and at the leaves.}
First, we consider the coloring process on $\T^{(\ell)}$, where seeds are placed at the root and at every leaf of $\T^{(\ell)}$ (that is, every vertex at height $\ell$). This setting mimics, in some sense, the model studied in the continuous setting on $[-1,1]^d$, with a blue point at the origin and the boundary of the cube colored red. Thus, here the seed at the root propagates the blue color to its descendants, whereas all seeds at the leaves propagate the same red color to their descendants (see Figure \ref{fig:abstract} for an illustration of this process). We denote by $B^{(\ell)}$ (resp.\ $R^{(\ell)}$) the final sets of blue (resp.\ red) vertices.

\medskip

Note that for $\ell<\ell'$, there is a natural coupling for the coloring processes on $\T^{(\ell)}$ and $\T^{(\ell')}$ obtained by considering the same ordering of the vertices (\emph{i.e.}, the same arrival times) and simply ignoring vertices $v$ with height $|v| > \ell$ when coloring $\T^{(\ell)}$. For the particular seed configuration considered here, this coupling is monotone in the sense that
\begin{eqnarray}\label{eq:coupling}
B^{(\ell)}\subset B^{(\ell')}\quad \hbox{a.s. for all $\ell<\ell'$.}    
\end{eqnarray}
We are particularly interested in the following quantities:
\begin{equation}\label{eq:M_N_m_n}
    M_{\ell}:=\max\{|v|:\; v\in B^{(\ell)}\},
\qquad
m_{\ell}:=\min\{|v|:\; v\in R^{(\ell)}\},
\end{equation}
that is, the maximal height reached by the blue region and the minimal height reached by the red region. By \eqref{eq:coupling}, the sequences $(m_\ell)_{\ell\ge 1}$ and $(M_\ell)_{\ell\ge 1}$ are both non-decreasing with $\ell$.

\begin{theorem}\label{theo:main} Consider the coloring process on $\T^{(\ell)}$ with a blue seed at the root and red seeds at the leaves constructed with the coupling described above. Then the coloring process converges a.s.\ in the local topology to the random coloring of $\T$ obtained by coloring the genealogical component of the root in blue and all other components in red. Furthermore, the sequence $(M_\ell)_{\ell\ge 1}$ converges a.s. to infinity and satisfies
\begin{equation}\label{eq:max}
  \lim_{\ell\to \infty} \P(M_{\ell}=\ell-1)=1.  
\end{equation}
The sequence $(m_\ell)_{\ell\ge 1}$ converges a.s. to a finite random variable $m_\infty$ such that
\begin{equation}\label{eq:min}
   \P(m_\infty=1)=\inf_{\ell\ge 1} \P(m_\ell=1)>0.  
\end{equation} 
\end{theorem}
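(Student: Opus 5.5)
We derive everything from Theorem~\ref{thm:GwithSeeds} applied with the single seed $\tilde s_1=o$, together with Theorem~\ref{thm:GwithoutSeeds}. Write $\tilde{\mathcal G}$ for the genealogical graph on $\T$ where only $o$ is a seed.

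\emph{Step 1: local convergence.} Fix $r$ and a vertex $v\in B(o,r)$. In $\T^{(\ell)}$, the ancestral path of $v$ is computed by backward exploration. By Theorem~\ref{thm:GwithSeeds}, in $\tilde{\mathcal G}$ this path is either absorbed at $o$ after finitely many steps, or it is infinite with a limit direction. In the first case the explored region (the balls $B(p^{(k)}(v), r(p^{(k)}(v)))$ along the path) is finite. Hence for $\ell$ large enough the exploration in $\T^{(\ell)}$ sees no leaf seed, and it coincides with the exploration in $\T$: then $v$ is blue. In the second case $|p^{(k)}(v)|\to\infty$ and the radii are finite. The $\T^{(\ell)}$ exploration agrees with the $\T$ one until the first ancestor whose search ball reaches height $\ell$. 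At that point the closest earlier vertex is either a leaf seed, or an earlier vertex of height $<\ell$ that is also its parent in $\T$. Continuing, the path can never reach $o$. The reason is that, up to the first reaching of the leaves, every ancestor is an ancestor in $\tilde{\mathcal G}$; after that, the ancestor is a leaf seed. Thus $v$ is red for all large $\ell$.

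One caveat must be handled: the $\T^{(\ell)}$-path must be identical to the $\T$-path until it ends at a leaf. This holds because distances inside $\T^{(\ell)}$ equal distances in $\T$, and vertices of height $>\ell$ simply do not exist; a leaf seed at distance $\le r(w)$ always dominates. Since $B(o,r)$ is finite, the coloring converges a.s.\ locally. The limit colors blue exactly the component $\tilde{\mathcal U}_1$ of $o$, which the statement phrases as "the genealogical component of the root". Monotonicity~\eqref{eq:coupling} is consistent with this.

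\emph{Step 2: $M_\ell$.} By Theorem~\ref{thm:GwithSeeds}(2b), with $M=1$ the set of vertices closest to $o$ is all of $\T$. Hence $o$ has infinite degree and $\tilde{\mathcal U}_1$ is infinite, so it has blue vertices of arbitrarily large height. Local convergence then gives $M_\ell\to\infty$ a.s.

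For~\eqref{eq:max}, consider the leaf-adjacent level. Fix a vertex $u$ at height $\ell-1$. It is blue only if its parent is not a leaf, and leaves are at distance $1$. So we need $r(u)\le 1$ with an earlier neighbor at height $\ell-2$ that is blue, roughly. Rather than estimating single vertices, I would use the many children of $o$. In the finite tree, the direct children of $o$ (with $\tau<$ their neighbors) are blue; then follow "chains" of vertices $w_1,\dots,w_{\ell-1}$ along a ray with increasing arrival times, each $w_{i+1}$ having parent $w_i$. Such a chain ends blue at height $\ell-1$. The events "the next vertex on the ray is revealed after its predecessor and before its other neighbors" can be organised into a branching process (a Galton--Watson tree of such chains) indexed by $\T$. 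This is the main obstacle: we must show that this process survives to depth $\ell-1$ with probability tending to $1$. I would try a dynamic condition adapted to the arrival time: the vertex of height $k$ with arrival time $t$ has $K$ children, each of which has parent equal to it with probability bounded below when $t$ is small. The mean offspring $\int$ of this kernel exceeds $1$ because of the exponential growth ($K\ge2$). A second-moment or $0$–$1$ argument over the many independent subtrees $\T_w$, $|w|=j$, then boosts the survival probability to $1-o(1)$. Alternatively, use the finiteness of descendants and the one-endedness to show that a blue vertex deep in $\tilde{\mathcal U}_1$ has a child at distance one, surviving the truncation.

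\emph{Step 3: $m_\ell$.} By monotonicity $m_\ell$ is non-decreasing, and $m_\ell=1$ means some child of $o$ is red. By local convergence, $m_\ell\to m_\infty:=\min\{|v|: v\notin\tilde{\mathcal U}_1\}$. This is finite because, by Theorem~\ref{thm:GwithSeeds}, the components $\tilde{\mathcal C}_i$ are nonempty and infinitely many. Since $\{m_\ell=1\}$ decreases in $\ell$, $\P(m_\infty=1)=\inf_\ell\P(m_\ell=1)$. Positivity follows by exhibiting an event of positive probability that forces a child $w$ of $o$ into some $\tilde{\mathcal C}_i$. For instance, $w$ has a child $w'$ with $\tau_{w'}<\tau_w$ and $\tau_{w'}$ smaller than the times of all its other neighbors. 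Then $p(w')$ lies at distance $\ge 1$ and is not $o$-absorbed, which occurs with positive probability by Theorem~\ref{thm:GwithoutSeeds}(1,4), since components with direction in $\partial\T_{w'}$ exist.
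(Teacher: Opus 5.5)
\paragraph{What is fine.} Your Step 1, the limit $M_\ell\to\infty$, the identification $m_\infty=\min\{|v|:\ v\notin\tilde{\mathcal U}_o\}$ and the identity $\P(m_\infty=1)=\inf_\ell\P(m_\ell=1)$ are correct and match the paper. Your Step 1 even makes explicit the ``prefix of the infinite-volume path, then one jump to a leaf'' structure that the paper only asserts.

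\paragraph{The gap: \eqref{eq:max}.} This is the hard part of the theorem, and the mechanism you propose cannot work. Suppose $w_1,\dots,w_n$ is a ray in which each $w_{i+1}$ inherits from $w_i$ at distance one. Then necessarily $\tau_{w_1}<\dots<\tau_{w_n}$. The expected number of rays with $n$ vertices, starting at height $1$, whose arrival times increase is $(K+1)K^{n-1}/n!$. So chains from the children of $o$ up to height $\ell-1$ exist with probability tending to $0$, not $1$.

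The branching process typed by arrival times is not supercritical. Times only increase along a lineage, so the favourable small-$t$ regime does not persist. No second-moment or $0$--$1$ boosting can rescue a first moment that decays superexponentially in $\ell$.

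Your fallback, through one-endedness and finiteness of descendants in $\tilde{\mathcal G}$, fails for a different reason. Statement \eqref{eq:max} concerns the layer adjacent to the leaves of $\T^{(\ell)}$. That layer escapes every fixed ball, so it is invisible to the local limit.

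\paragraph{The missing idea.} Blue reaches the leaves by long jumps, and the paper captures this by following the dynamics forward in time. Take a $K$-ary tree of height $n$ with blue root and red leaves, plus at most $C$ extra red vertices within $2H$ of the leaves.
\begin{enumerate}
\item With high probability, the first vertices of height $<n/2$ to ring sit at height in $(n/2-H,n/2)$.
\item With probability $\ge 1-\eta$, among the first $N$ of them there is a vertex $X_i$ such that no clock has rung in $B(X_i,|X_i|)$. It therefore inherits blue directly from the root, while its subtree contains at most $C$ coloured vertices, all within $2H$ of the leaves. This is Proposition~\ref{prop:unsommetvide}, whose core is Lemma~\ref{lem:technique}.
\item One finds two such vertices with disjoint subtrees (Proposition~\ref{prop:main}) and restarts in these subtrees. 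They have the same kind of initial condition and about half the height.
\item This gives a comparison with a supercritical Galton--Watson process with offspring $0$ or $2$. Hence there are polynomially many independent subtrees of bounded height, each producing a blue vertex at height $\ell-1$ with probability bounded below.
\end{enumerate}

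\paragraph{A secondary gap: positivity of $\P(m_\infty=1)$.} Your argument here is not justified. Items 1 and 4 of Theorem~\ref{thm:GwithoutSeeds} give components with directions in $\partial\T_{w'}$, but they say nothing about the ancestral path of $w$ or $w'$ avoiding $o$. The paper instead uses Proposition~\ref{prop:keyresult} together with transitivity. With positive probability, the seedless exploration from a neighbour $w$ of $o$ never examines $o$. On that event $\tilde\Gamma(w)=\Gamma(w)$ is infinite, so $w$ is red in the limit.
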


The a.s. convergence of the coloring process to the limit coloring of $\T$ described above is a soft result that follows from the coupling and the description of the genealogical graph $\mathcal{G}$ given in Theorem \ref{thm:GwithoutSeeds}. The more interesting (and more difficult) part of the theorem is \eqref{eq:max}, which, somewhat counterintuitively, states that the blue region not only grows to infinity but, in fact, reaches maximum height $\ell-1$ with high probability as $\ell$ tends to infinity. This result is in stark contrast with the behavior of the coloring process in Euclidean space, where it is known that, with positive probability, the blue region does not reach the boundary\footnote{The open question mentioned at the beginning of the introduction is whether this occurs with probability~$1$, see open Question 1.4 in \cite{BasdevantBlancCurienSingh} and the discussion preceding Corollary 1 in \cite{Preater2009} for more details.}.

\medskip 

Statement \eqref{eq:min} of Theorem~\ref{theo:main} shows that, although the distance from the root to the red seeds increases with the height of the tree, the coloring remains non-trivial around the root. Again, this phenomenon contrasts with the Euclidean setting: if one considers the Poissonian coloring in a ball with red boundary conditions and a blue seed at the origin, then letting the radius tend to infinity (together with an appropriate scaling) leads to a trivial limit in which the coloring becomes entirely blue. On the infinite tree $\T$, however, the limit remains non-trivial because the red seeds do not disappear; instead, they are pushed to infinity, \emph{i.e.}, to the ideal boundary of the tree. This phenomenon is reminiscent of situations arising in hyperbolic geometry, where infinite-volume limits of random structures may naturally live on the ideal boundary of the space. A related example is the recently introduced model of ideal Poisson--Voronoi tessellations \cite{dAchilleEtAl23}.

\bigskip

\noindent\textbf{Two-seed coloring.} We now study the coloring process on the finite tree with two seeds. Fix another vertex $o' \in \T$ and for $\ell > d(o,o')$, consider the coloring process on $\T^{(\ell)}$ with a blue seed at the root and a red seed at $o'$. As usual, we denote by $\tilde{B}^{(\ell)}$ (resp.\ $\tilde{R}^{(\ell)}$) the set of blue (resp.\ red) vertices.
We describe the local limit in distribution of the process $(\tilde{B}^{(\ell)},\tilde{R}^{(\ell)})$ in terms of the genealogical graph associated with the coloring process on the infinite tree $\T$ with two seeds. We first introduce some notation.

\medskip

Let $\tilde{\mathcal G}$ denote the genealogical graph of the coloring process on the infinite tree $\T$ with two seeds at $o$ and $o'$. Theorem~\ref{thm:GwithSeeds} states that the decomposition of $\tilde{\mathcal G}$ into connected components satisfies
$$
\tilde{\mathcal G}
=
\bigsqcup_{i \in I} \tilde{\mathcal C}_i
\sqcup
\tilde{\mathcal U}_o
\sqcup
\tilde{\mathcal U}_{o'},
$$
where $\tilde{\mathcal U}_o$ (resp.\ $\tilde{\mathcal U}_{o'}$) is the genealogical component of $o$ (resp.\ $o'$), and each genealogical component $\tilde{\mathcal C}_i$ is an infinite oriented tree with asymptotic direction $\dir \tilde{\mathcal C}_i$. Observe that, for any asymptotic direction in $\partial \T$, we can determine whether it is asymptotically closer 
to $o$, closer to $o'$, or asymptotically at equal distance from $o$ and $o'$.

\medskip

\noindent Now, consider the following coloring procedure:
\begin{itemize}
    \item Color all vertices of $\tilde{\mathcal U}_o$ in blue and all vertices of $\tilde{\mathcal U}_{o'}$ in red.
    
    \item For the clusters $\tilde{\mathcal C}_i$ such that their asymptotic direction $\dir \tilde{\mathcal C}_i$ is asymptotically closer to $o$ (resp.\ to $o'$), color all their vertices in blue (resp.\ red).
    
    \item For each cluster $\tilde{\mathcal C}_i$ such that its asymptotic direction $\dir \tilde{\mathcal C}_i$ is at equal distance from $o$ and $o'$, pick an independent Bernoulli random variable $X_i$ with parameter $\frac{1}{2}$ and color the cluster in blue if $X_i=1$ and red otherwise.
\end{itemize}
This procedure yields a random coloring process of $\T$ in two colors; we denote by $\chi_{o'}$ its distribution.

\begin{theorem}\label{theo:twoseed} Consider the coloring process on the finite tree $\T^{(\ell)}$ with two seeds located at $o$ and $o'$. Then the coloring $(\tilde{B}^{(\ell)},\tilde{R}^{(\ell)})$ on $\T^{(\ell)}$ converges in distribution in the local topology to $\chi_{o'}$ on $\T$.
\end{theorem}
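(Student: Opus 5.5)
We couple the finite colorings with the infinite one: use the same arrival times $(\tau_v)$ on $\T$ and the same tie-breaking rule, and let $\tilde{\mathcal G}$ be the genealogical graph on $\T$ with seeds $o,o'$. The finite coloring on $\T^{(\ell)}$ is produced by the same ancestral rule, except that only vertices of $B(o,\ell)$ can serve as ancestors. Fix a radius $R$ and a vertex $v\in B(o,R)$. The plan is to show two things with probability close to one. First, the finite ancestral line of $v$ agrees with its infinite line $\tilde\Gamma(v)$ up to a vertex far away. Second, the color that the finite process eventually gives to that far vertex is, asymptotically, the one prescribed by $\chi_{o'}$.

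\textbf{Step 1 (stabilization near the root).} We take $v\in B(o,R)$ with $v\in\tilde{\mathcal U}_o\cup\tilde{\mathcal U}_{o'}$. By Theorem~\ref{thm:GwithSeeds}, its ancestral line in $\tilde{\mathcal G}$ is finite and ends at a seed. All the balls $B(w,r(w))$ used along this line are finite. Hence for $\ell$ large the finite and infinite parents agree along the whole line, and the color stabilizes to the correct one almost surely.

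Now take $v$ in some $\tilde{\mathcal C}_i$. Its ancestral line is infinite and converges to $\xi=\dir\tilde{\mathcal C}_i$; by Theorem~\ref{thm:GwithoutSeeds}(3) and Theorem~\ref{thm:GwithSeeds}(1) it moves outward. For fixed $L$, the first ancestors of $v$ up to the first ancestor at height $\ge L$ coincide for all large $\ell$ with those in $\tilde{\mathcal G}$. Call that ancestor $w_L$. The finite color of $v$ equals the finite color of $w_L$, provided the finite ancestral path beyond $w_L$ does not come back into $B(o,R)$. This holds with high probability: returning would require a small arrival time at a vertex that is reached only through long explorations.

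\textbf{Step 2 (color of far ancestors).} We need the finite color of $w_L$ when $L$ and $\ell$ are large. Three cases arise according to $\xi$.

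\emph{Case $\xi$ strictly closer to $o$.} Beyond the branching point of the geodesics to $o,o'$, the geodesic from $w_L$ to $o'$ passes through the geodesic to $o$. The finite ancestral line from $w_L$ does not enter the seed-free region before terminating in a seed. We will argue that it exits only toward the root side, using that every seed-nearest region near the direction $\xi$ belongs to $o$. Concretely, we would show that, given that the path has reached height $L$ in the subtree $\T_u$ with $u$ far from $\{o,o'\}$, the color is determined by the first seed reached when the path finally retreats. Reaching $o'$ would require passing through $o$'s nearer region, and there the nearest already-revealed competitor is $o$ or a descendant of $o$'s side. The monotone-coupling ideas behind \eqref{eq:coupling} should give this.

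\emph{Equidistant case.} The subtree $\T_u$ containing $\xi$ is attached to the geodesic $[o,o']$ at its midpoint $m$. By the symmetry of the subtree seen from the two seeds, there is a reflection of $\T$ swapping $o$ and $o'$ and fixing $\T_u$ pointwise. It preserves the law of the arrival times and the finite tree $\T^{(\ell)}$ only approximately, since $\T^{(\ell)}$ is centered at $o$. We would therefore compare with the ball centered at $m$, showing that the boundary asymmetry does not affect local events with high probability. The finite color of the cluster is then asymptotically a fair coin. Independence across different clusters would come from the fact that distinct clusters have distinct directions (Theorem~\ref{thm:GwithoutSeeds}(5)), so their far ancestral lines explore disjoint far regions whose colors are asymptotically independent.

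\textbf{Main obstacle.} The hardest point is Step 2: controlling the finite ancestral line after it leaves every fixed ball. One has to show that it travels out to height close to $\ell$ and then returns to a seed essentially along the geodesic from the far region, its color being decided by which seed is closer to that far region. I would try to prove that the finite line, from height $L$, reaches the boundary layer with high probability before any retreat. The retreat then occurs in one step, because near the leaves almost all vertices are already revealed, except those whose nearest revealed vertex is a seed. This yields the distance comparison, and the Bernoulli$(1/2)$ symmetry in the tie case.
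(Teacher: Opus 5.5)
\textbf{The gap is Step 2: it carries the whole content of the theorem, and you only outline it.} Your Step 1 is fine. It is Lemma \ref{lemm:derniersaut}(i) plus local convergence of the first ancestors. The proviso that the finite path not return to $B(o,R)$ is superfluous, since $v$ always carries the color of its ancestor $w_L$.

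\emph{Strictly closer case.} For $\xi$ strictly closer to $o$ you appeal to ``the monotone-coupling ideas behind \eqref{eq:coupling}''. That monotonicity is specific to the configuration with a seed at the root and seeds at all leaves. Nothing of the kind holds for two seeds: when $d(o,o')$ is even, the color of a vertex whose cluster has its direction in the middle subtree changes infinitely often along the coupling.

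\emph{Equidistant case.} The automorphism swapping $o$ and $o'$ only relates the coloring on $B(o,\ell)$ to the coloring on $B(o',\ell)$ with colors exchanged. To conclude, you would have to compare the law of the color of $v$ on two different finite domains. That is not a local statement. Proving it requires controlling the finite ancestral path after it leaves every fixed ball, which is exactly what you list as the main obstacle and leave unproved. The heuristic you give there (the line reaches the leaves and then retreats in one step because ``almost all vertices are already revealed'') is not what decides the color.

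\textbf{The missing ingredient is a short deterministic argument, the paper's Lemma \ref{lemm:derniersaut}(ii).}
\begin{enumerate}
\item Choose $w$ on $\xi$ deep enough that $\T_w\subset\overline{\T}_i$ and $o,o'\notin\T_w$. Let $A_w:=B(w,d(w,o))\setminus(\T_w\cup\{o,o'\})$, which is a finite set.
\item Since the arrival times along $\tilde\Gamma(v)$ tend to $0$, there is $N$ with $\tilde V_n\in\T_w$ for all $n\ge N$ and $\tau_{\tilde V_N}<\min_{u\in A_w}\tau_u$. For $\ell$ large, the finite path passes through $\tilde V_N$.
\item Let $Y$ be the last non-seed vertex of the finite path lying in $\T_w$, so $\tau_Y\le\tau_{\tilde V_N}$. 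The parent of $Y$ cannot be in $A_w$, by the arrival times. It cannot be a non-seed vertex of $\T_w$, by maximality of $Y$. It cannot lie outside $\T_w\cup B(w,d(w,o))$, since every such vertex is strictly farther from $Y$ than the seed $o$.
\item Hence $Y$ jumps directly to a seed, necessarily the one nearer to $Y\in\overline{\T}_i$. This settles every $i\neq d/2$, with no need to know how far the path travels before that jump.
\end{enumerate}

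When $i=d/2$, $Y$ is equidistant from $o$ and $o'$. The choice between them is then made by the uniform tie-breaking indexing at $Y$, which produces the fair coin directly, with no symmetry of the domain needed.

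For two non-coalescing clusters, Item 5 of Theorem \ref{thm:GwithoutSeeds} (transferred to $\tilde{\mathcal G}$ via Theorem \ref{thm:GwithSeeds}) gives disjoint subtrees $\T_{w_1}$ and $\T_{w_2}$ containing the respective penultimate vertices. The two coins therefore come from indexings at distinct vertices, which is the asymptotic independence you wanted.
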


In fact, if we construct the process $(\tilde{B}^{(\ell)},\tilde{R}^{(\ell)})$ and $\tilde{\mathcal G}$ using the same sequence of arrival times $(\tau_v)_{v \in \T}$, then the previous result can be strengthened to yield almost sure convergence of $(\tilde{B}^{(\ell)},\tilde{R}^{(\ell)})$ outside the clusters whose asymptotic directions are at equal distance from $o$ and $o'$ (but not for the cluster at equal distance). In particular, we also obtain the following proposition.

\begin{prop}\label{prop:twoseed}
Let $o'$ be a vertex of $\T$ such that $d(o,o')$ is odd. For $\ell > d(o,o')$, consider the coloring process on the finite tree $\T^{(\ell)}$ with two seeds located at $o$ and $o'$, and assume that all processes are constructed using the same sequence of arrival times. Then $
(\tilde{B}^{(\ell)},\tilde{R}^{(\ell)})_{\ell > d(o,o')}
$
converges almost surely in the local topology to some random sets
$
(\tilde{B}^{(\infty)},\tilde{R}^{(\infty)}).
$
Moreover:
\begin{itemize}
    \item[\textup{(a)}]
    $\tilde{B}^{(\infty)} \cap \{v \in \T : d(v,o)>d(v,o')\}$ is a.s. finite;
    
    \item[\textup{(b)}]
    $\tilde{R}^{(\infty)} \cap \{v \in \T : d(v,o)<d(v,o')\}$ is a.s. finite.
\end{itemize}
\end{prop}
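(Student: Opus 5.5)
We will couple all the finite colorings with the infinite two-seed genealogical graph $\tilde{\mathcal G}$ through the same arrival times $(\tau_v)$, and with $\tau_o=\tau_{o'}=0$. Since $d(o,o')$ is odd, no vertex is equidistant from $o$ and $o'$. Consequently no direction in $\partial\T$ is asymptotically equidistant: along any ray the difference $d(v,o)-d(v,o')$ is eventually constant, odd, hence nonzero. So every $\xi\in\partial\T$ lies in one of two open sets $\partial_o:=\{\xi:\text{eventually closer to }o\}$ and $\partial_{o'}$. These are unions of boundaries of subtrees $\T_w$ with $w$ in one half-space, and the split is realized by the single edge on the geodesic $[o,o']$ where the two half-spaces $H_o=\{d(v,o)<d(v,o')\}$ and $H_{o'}$ meet.

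\textbf{Step 1: genealogy on $\T^{(\ell)}$ versus on $\T$.} For a fixed vertex $v$, compute its parent in $\T^{(\ell)}$ and in $\T$. These agree as soon as $\ell\ge |v|+r(v)$, with the same tie-breaking. Ancestral lines in $\tilde{\mathcal G}$ either end at a seed after finitely many steps, or are infinite and converge to $\dir\tilde{\mathcal C}_i$ (Theorems~\ref{thm:GwithoutSeeds} and \ref{thm:GwithSeeds}).

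In the finite tree, the ancestral line of $v$ follows the infinite one until the first ancestor $p^{(k)}(v)$ whose parent in $\T$ lies outside $B(o,\ell)$, or whose finite-volume search radius is affected by truncation. After that it evolves inside $\T^{(\ell)}$ near height $\ell$ until it hits a seed. Since the line was converging to $\dir\tilde{\mathcal C}_i$, this departure point lies for large $\ell$ deep inside a subtree $\T_w$ with $\partial\T_w$ contained in $\partial_o$ or in $\partial_{o'}$.

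\textbf{Step 2: the key claim.} A finite-volume ancestral line that starts in $\T_w\cap\T^{(\ell)}$ at height $\approx\ell$, with $w$ far from $o,o'$ and in $H_o$, ends at $o$ with probability tending to one. This is the main obstacle, since the finite-volume dynamics near the leaves is not controlled by the infinite picture.

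I would first try a geometric observation: parent jumps have length $r(\cdot)$, and a vertex in $\T_w$ can only pick an ancestor outside $\T_w$ if its radius exceeds its distance to $w$. Since $\T_w\subset H_o$, any ancestor outside $\T_w$ but still in $H_o$ keeps the color choice on the $o$ side. The risk is a jump crossing into $H_{o'}$ before reaching $o$. Every vertex of $H_o$ is strictly closer to $o$ than to $o'$, and seeds are present from time $0$, so $r(u)\le d(u,o)<d(u,o')$. Hence a vertex in $H_o$ never chooses $o'$ directly, but it could choose a nonseed vertex in $H_{o'}$. To rule out such crossings I would use the estimate $\E[\sharp \tilde D(v)\mid\tau_v]<\infty$ together with a first-moment bound. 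The number of vertices $u\in\T_w$ whose parent (in any $\ell$) lies across the cut edge is dominated by vertices whose radius exceeds $d(u,w)+d(w,\text{cut})$. Radii have exponentially small tails relative to the growth, because $\tau_u$ is typically not tiny and balls of radius $r$ contain $\sim K^r$ vertices. Summing over $u$ then gives a quantity that is small when $d(w,\text{cut})$ is large, uniformly in $\ell$. Applying Borel--Cantelli along the increasing sequence of $\ell$ then yields a.s. stabilization of colors.

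\textbf{Step 3: conclusion.} Together these steps give a.s. local convergence. The color of $v$ stabilizes to blue if its line ends at $o$, or if it is infinite with $\dir\in\partial_o$, and symmetrically for red, matching $\chi_{o'}$ without randomization.

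For (a), every vertex of $H_{o'}$ colored blue in the limit is in $\tilde{\mathcal U}_o$ or in a cluster with direction in $\partial_o$. Only finitely many vertices of $H_{o'}$ belong to $\tilde{\mathcal U}_o$, because by Theorem~\ref{thm:GwithSeeds}(2b) $o$ has finite degree restricted to $H_{o'}$ and descendant sets are finite. To see this, use a first-moment estimate: the expected number of $u\in H_{o'}$ with parent in $H_o$ is finite, by the radius-tail bound from Step 2. The same bound controls the vertices of $H_{o'}$ lying in clusters directed into $\partial_o$: such a vertex must have an ancestor that jumps from $H_{o'}$ to $H_o$, and each jumping vertex has finitely many descendants, with finite expectation. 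This gives (a), and (b) follows by symmetry.
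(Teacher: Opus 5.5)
There is a genuine gap in Step~2, and your proof of (a) inherits it.

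\textbf{The first-moment bound fails.} Let $u\in\T_w$ be at distance $n$ from $w$, and set $D:=d(w,\mathrm{cut})$ in your notation. The event $r(u)\ge n+D$ says that $\tau_u$ is minimal in a ball of radius $n+D-1$, which is seed-free whenever such a jump is possible at all. This has probability of order $K^{-(n+D)}$. There are of order $K^n$ such $u$, so each generation contributes of order $K^{-D}$ and the sum grows linearly in $\ell$. Radius tails exactly balance the exponential growth of the tree, so they cannot give a bound uniform in $\ell$. Near height $\ell$ it is worse: a truncated ball of radius $r$ has only of order $K^{r/2}$ vertices, so long finite-volume jumps are even more likely.

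\textbf{Borel--Cantelli does not apply.} Even a bound of the form ``the line ends at $o$ with probability tending to one'' would not give almost sure stabilization. Borel--Cantelli along $\ell$ requires summability in $\ell$, which you do not have.

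\textbf{Consequences for (a).} The same divergent computation undermines the first-moment bound you invoke for (a). The fact that $o$ has finite (indeed zero) degree in $H_{o'}$ does not help either: $o$ has infinitely many children in $H_o$, and their descendants might cross.

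\textbf{The missing idea for convergence.} No estimate is needed; one only uses that arrival times strictly decrease along ancestral lines. Take $w\neq o$ on the ray $\dir\tilde\Gamma(v)$ with $\T_w\subset H_o$.
\begin{itemize}
\item For $y\in\T_w$, every non-seed vertex outside $\T_w$ that is not farther from $y$ than the seed $o$ lies in the \emph{finite} set $A_w:=B(w,d(w,o))\setminus(\T_w\cup\{o,o'\})$.
\item Choose $N$ with $\tilde V_n\in\T_w$ for all $n\ge N$ and $\tau_{\tilde V_N}<\min_{u\in A_w}\tau_u$. This is possible since $\tau_{\tilde V_n}\to 0$.
\item For $\ell$ large, the finite-volume line of $v$ agrees with $\tilde\Gamma(v)$ up to $\tilde V_N$.
\item Let $Y^{(\ell)}$ be the last non-seed vertex of the finite-volume line lying in $\T_w$. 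Then $\tau_{Y^{(\ell)}}\le\tau_{\tilde V_N}$.
\item So the parent of $Y^{(\ell)}$ is neither in $A_w$ nor a non-seed vertex of $\T_w$. Hence it is a seed, and since $\T_w\subset H_o$, it is $o$.
\end{itemize}
This is Lemma~\ref{lemm:derniersaut}. It gives stabilization for every large $\ell$ on the almost sure event where the infinite-volume picture holds, with no probabilistic estimate.

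\textbf{The missing idea for (a).} The analogous observation is geometric. If $z\in H_{o'}$ and $\tilde p(z)\in H_o$, then $d(z,\tilde p(z))\le d(z,o')$ forces $\tilde p(z)\in B(o',d-1)\setminus\{o'\}$. Hence every limiting blue vertex of $H_{o'}$ lies in $\bigcup_{u\in B(o',d-1)\setminus\{o'\}}\tilde D(u)$. This is a finite union of sets that are a.s.\ finite by Theorems~\ref{thm:GwithoutSeeds} and~\ref{thm:GwithSeeds}.

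Your first-moment claim for (a) is in fact true, but for a reason you do not use. The landing vertex must be one of these finitely many vertices and must have arrival time below $\tau_u$. That supplies the extra factor of order $K^{-n}$ that radius tails alone miss.
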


The assumption that $d(o,o')$ is odd is required here to ensure that no vertex of $\T$ can be at the same distance from $o$ and $o'$. This implies that the color of a vertex $v$ eventually stabilizes according to whether it belongs to a cluster of $\tilde{\mathcal G}$ whose asymptotic direction is closer to $o$ or to $o'$.

We will, in fact, also show that if $d(o,o')$ is even, the sequence $(\tilde{B}^{(\ell)},\tilde{R}^{(\ell)})_{\ell > d(o,o')}$ still converges outside the set consisting of vertices that are at equal distance from $o$ and $o'$, and the two finiteness statements above remain valid.
Thus, from a macroscopic perspective, (a) and (b) tell us that the coloring of $\T^{(\ell)}$ with two seeds is ``almost'' deterministic: apart from finitely many vertices, all those that are closer to $o$ are colored blue, while those that are closer to $o'$ are colored red. Simulations also suggest that the same phenomenon occurs for a similar coloring process on the hyperbolic plane.

\subsection*{Outline of the paper}

In Section \ref{s:genealogicalprocess}, we give the precise construction of the ancestral path $\Gamma(v)$ and of an associated ancestral process $(V_n(v),W_n(v))$. This includes the uniform increasing indexings used to break ties and an equivalent construction in terms of the inter-arrival times of a lower record process. We then derive the preliminary estimates on this process, in particular the growth estimates for the jumps and for the height of the ancestral path (Proposition \ref{prop:V_nconditionnelle} and Corollary \ref{coro:V_nenn2}).

\medskip

Section \ref{s:Gwithoutseed} is devoted to the proof of Theorem~\ref{thm:GwithoutSeeds}. We first prove that every genealogical component has an asymptotic direction. We then use Proposition~\ref{prop:keyresult}, which shows that an ancestral path may avoid an entire descendant subtree with positive probability, to prove that $\mathcal{G}$ has infinitely many components and that every descendant sector contains the direction of some component. The fact that two distinct components have distinct directions is proved by introducing a coupled exploration of two ancestral paths, encoded by a lower 2-record process. Finally, we prove that every vertex has only finitely many descendants by a combinatorial estimate on possible descendant paths.

\medskip 

In Section \ref{s:Gwithseed}, we prove Theorem~\ref{thm:GwithSeeds}. The argument is based on a coupling between the genealogical graph without seeds and the genealogical graph with a finite set of seeds: the ancestral path of a vertex is changed exactly when its exploration discovers one of the seeds. This observation transfers most of the structure obtained in the seedless case and reduces the remaining work to characterizing when a seed has infinite degree in its genealogical component.

\medskip 

Section \ref{s:local_limits} returns to finite trees and proves the local-limit results. We first record the local convergence of genealogical graphs under the natural coupling. We then treat the two-seed coloring, proving Theorem~\ref{theo:twoseed} and Proposition~\ref{prop:twoseed} by reading the limiting color of a vertex from the terminal seed or the asymptotic direction of its infinite-volume ancestral path. The last part of the section studies the coloring with a blue seed at the root and red seeds at the leaves: the local convergence and the statement \eqref{eq:min} follow from the infinite-volume results with seeds. The remaining estimate \eqref{eq:max}, which concerns the behavior of the coloring near the leaves, requires different arguments; it is proved separately in Section \ref{sec:max}, where we follow the coloring dynamics forward in time and compare the recursive growth of blue vertices with a supercritical Galton--Watson process.

\newpage

\section{The genealogical process}\label{s:genealogicalprocess}

\subsection{Construction of the genealogical process}\label{sec:constructionW}
A central object in the study of the geometry of the genealogical graph is the ancestral path $\Gamma(v)$ of a vertex $v$, as defined in \eqref{eq:def_path}. To analyze this process, it is convenient to keep track not only of $\Gamma(v)$ but also of the set of vertices of $\T$ that need to be examined in order to determine $\Gamma(v)$. We denote this set by $W_\infty(v)$. This subsection gives a precise construction of $\Gamma(v)$ and $W_\infty(v)$. We first introduce the notion of a \emph{uniform increasing indexing}, which gives us a rule to break ties when several possible ancestors of a vertex are available.

\begin{defi}\label{def:randomindex}
Let $G=(V,E)$ be a connected, locally finite graph, and let $v \in V$.
We say that a random sequence $(\zi_j)_{j \ge 1}$ is a uniform increasing indexing of the vertices of $G$ centered at $v$ if it satisfies the following properties:
\begin{itemize}
    \item[(i)] (indexing of $V$) $\{\zi_j : j \ge 1\} = V$ and $\zi_j \neq \zi_i$ for $j \neq i$;
    \item[(ii)] (increasing) if $j < i$, then $d(v,\zi_j) \le d(v,\zi_i)$;
    \item[(iii)] (uniformity) for every $r \ge 0$, the order of the vertices at distance $r$ from $v$ induced by $(\zi_j)_{j \ge 1}$ is uniform among all possible orderings and independent of the orderings induced at other distances.
\end{itemize}
\end{defi} 
Note that (iii) makes sense because we consider locally finite graphs, so the set of vertices at distance $r$ from $v$ is finite for every $r \ge 0$. In particular, a uniform increasing indexing of the vertices of $\T$ centered at $v$ can be constructed by first sampling a uniform random ordering of the vertices at distance $1$ from $v$, then a uniform random ordering of the vertices at distance $2$ from $v$, and so on, independently for each distance.

Recall that $(\tau_u)_{u \in \T}$ denotes the arrival times associated with the vertices of $\T$. In this section, we consider the model without seeds; hence, the variables $(\tau_u)_{u \in \T}$ are i.i.d.\ with an atomless distribution.
For each vertex $w \in \T$, let $(\zi_j^w)_{j \geq 1}$ denote a uniform increasing indexing of $\T$ centered at $w$, independent of the family $(\tau_u)_{u \in \T}$, and such that the families $\bigl((\zi_j^w)_{j \geq 1}\bigr)_{w \in \T}$ are independent.
Using this collection of random variables, we define the direct ancestor $p(v)$ of $v$ as follows: we examine the arrival times of the vertices of $\T$ in the order given by the sequence $(\zi_j^v)_{j \ge 1}$, and we stop as soon as we find a vertex $u$ such that $\tau_u < \tau_v$. We then declare that $p(v)$ is this vertex $u$. More formally, we set
\[
J(v):=\inf\{j\ge 1:\tau_{\zi_j^v}<\tau_v\},
\qquad
p(v):=\zi_{J(v)}^v.
\]
In this construction, the family of random variables $(\tau_u)_{u \in \T}$ uniquely determines the distance between $v$ and its direct ancestor $p(v)$, while the family of random indexings is used to break ties when several possible ancestors are available. Moreover, because the ordering is uniform on each sphere centered at $v$, this choice is uniform among all closest such vertices; therefore, this definition of the direct ancestor coincides with the definition given in the introduction.

For $v \in \T$, we can generalize this construction to define the entire ancestral path $\Gamma(v)=(V_n)_{n \ge 1}$ of $v$ by induction. We simultaneously construct a sequence of sets $(W_n(v))_{n \ge 1}$: the set $W_n(v)$ records the vertices whose arrival times have been revealed in order to determine $(V_1,\ldots,V_n)$.

\medskip

First, set
\[
V_1 := V_1(v) := v,
\qquad
W_1 := W_1(v) := \{v\}.
\]

Assume now that $(V_1,\dots,V_n)$ and $W_n$ have been constructed, where $V_n$ is the $(n-1)$-th ancestor of $v$, and $W_n$ records the vertices revealed up to this point. Assume moreover that, for every $u \in W_n$, we have $\tau_u \ge \tau_{V_n}$.
To determine the direct ancestor of $V_n$, we examine the vertices in the order given by the increasing indexing centered at $V_n$ and set
\[
J_n := \inf\{j \ge 1 : \tau_{\zi_j^{V_n}} < \tau_{V_n}\}
     = \inf\{j \ge 1 : \zi_j^{V_n} \notin W_n \hbox{ and } \tau_{\zi_j^{V_n}} < \tau_{V_n}\}.
\]
The two definitions coincide since, for every $u \in W_n$, we have $\tau_u \ge \tau_{V_n}$.
Thus, by the construction of the direct ancestor, $\zi_{J_n}^{V_n}$ is precisely equal to $p(V_n)$. We therefore set
\begin{equation}\label{eq:defvraiVn}
 V_{n+1} := \zi_{J_n}^{V_n},
\qquad
W_{n+1} := W_n \cup \{\zi_j^{V_n} : j \le J_n\}.
\end{equation}

By construction, $W_{n+1}$ records all vertices whose arrival times have been examined in order to determine $(V_1,\ldots,V_{n+1})$, and for every $u \in W_{n+1}$, we have $\tau_u \ge \tau_{V_{n+1}}$.

Finally, define
\[
\Gamma(v):=(V_n)_{n\ge 1},
\qquad
W_\infty := W_\infty(v) := \bigcup_{n \ge 1} W_n,
\]
so that $\Gamma(v)$ is the ancestral path of $v$, and $W_\infty$ is the set of vertices whose arrival times have been ultimately revealed. The process $(V_n(v), W_n(v))_{n \ge 1}$ will play a central role throughout the paper, and we therefore introduce the following terminology.

\begin{defi}
For \( v \in \T \), the process \( (V_n(v), W_n(v))_{n \ge 1} \) constructed by the procedure described above is called the \emph{ancestral process} of \( v \). Its first coordinate \( (V_n(v))_{n \ge 1} \) is called the \emph{ancestral path} of \( v \).
\end{defi}

For a fixed $v$, it will be convenient to consider a process $(\hat{V}_n(v), \hat{W}_n(v))_{n \ge 1}$ with the same distribution as the ancestral process of $v$, but defined directly in terms of 
the inter-arrival times $(I_n)_{n \ge 1}$ of a generic lower record process associated with an i.i.d.\ atomless sequence\footnote{One concrete realization of $(I_n,\, n\geq 1)$ is the following. Let $(X_i)_{i \ge 1}$ be i.i.d.\ with an atomless distribution, and set $T_0=0$, $T_1=1$, and, for $n\ge1$, $T_{n+1}:=\inf\{k>T_n:X_k<X_{T_n}\}$. Then set $I_n:=T_n-T_{n-1}$ for $n\ge 1$. The law of $(I_n, n\ge 1)$ does not depend on the choice of the distribution of $(X_i)_{i \ge 1}$.}. For each $w \in \T$, let $(\zi_j^w)_{j \geq 1}$ be a uniform increasing indexing of $\T$ centered at $w$, independent of $(I_n)_{n \geq 1}$, and such that the families $\bigl((\zi_j^w)_{j \geq 1}\bigr)_{w \in \T}$ are independent. Set 
\[
\hat{V}_1 := \hat{V}_1(v) := v,
\qquad
\hat{W}_1 := \hat{W}_1(v) := \{v\}.
\]
By induction, assuming that $(\hat{V}_1,\dots,\hat{V}_n)$ and $\hat{W}_n$ have been constructed, define $\hat{J}_n$ as the first index such that we explore $I_{n+1}$ new vertices around $\hat{V}_n$ that do not belong to $\hat{W}_n$:
\begin{equation}\label{eq:def1process}
\hat{J}_n := \inf\left\{j \ge 1 : \sharp\{i \le j : \zi_i^{\hat{V}_n} \notin \hat{W}_n\}=I_{n+1}\right\}.
\end{equation}
Then set
\begin{equation}\label{eq:def2process}
\hat{V}_{n+1} := \zi_{\hat{J}_n}^{\hat{V}_n},
\qquad
\hat{W}_{n+1} := \hat{W}_n \cup \{\zi_j^{\hat{V}_n} : j \le \hat{J}_n\}.
\end{equation}
We note that this construction uses only the inter-arrival times $(I_n)_{n \ge 1}$ of a generic lower record process, rather than the arrival times themselves. Nevertheless, as the next lemma states, it has the same distribution as the ancestral process.

\begin{lemma}\label{lemm:const_record} The process $(\hat{V}_n(v), \hat{W}_n(v))_{n \ge 1}$ defined above has the same distribution as the ancestral process $(V_n(v), W_n(v))_{n \ge 1}$. Moreover, for every $n\ge 1$, we have
\[
I_n = \sharp\bigl(\hat{W}_n(v) \setminus \hat{W}_{n-1}(v)\bigr),
\qquad \text{with the convention } \hat{W}_0(v) := \emptyset.
\]
\end{lemma}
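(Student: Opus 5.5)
The plan is to show that, in the original ancestral process, the number of newly revealed vertices at each step, $N_n := \sharp(W_n\setminus W_{n-1})$, together with the indexings, has the same joint law as $(I_n)$ together with independent indexings. The sequence of these counts is exactly the sequence of inter-arrival times of a lower record process. The key observation is that the exploration order is entirely determined by the indexings $(\zi^w_j)$ and by the counts $N_n$. Indeed, given $V_n$ and $W_n$, the set $W_{n+1}$ and the vertex $V_{n+1}$ are obtained from $\zi^{V_n}$ by scanning past $W_n$ until $N_{n+1}$ fresh vertices have been seen. This is precisely \eqref{eq:def1process}--\eqref{eq:def2process} with $I_{n+1}$ replaced by $N_{n+1}$. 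So there is a deterministic map $\Phi$ with
\[
(V_n,W_n)_{n\ge1}=\Phi\bigl((N_n)_{n\ge 1},(\zi^w)_{w}\bigr)
\quad\text{and}\quad
(\hat V_n,\hat W_n)_{n\ge1}=\Phi\bigl((I_n)_{n\ge 1},(\zi^w)_w\bigr).
\]
It therefore suffices to prove that $(N_n)_{n\ge 1}$ is independent of the indexings and distributed as $(I_n)$. The identity $I_n=\sharp(\hat W_n\setminus\hat W_{n-1})$ is then immediate from the construction: the new vertices added at step $n$ are exactly the first $I_n$ vertices of $\zi^{\hat V_{n-1}}$ outside $\hat W_{n-1}$, with $N_1=I_1=1$ corresponding to $W_1=\{v\}$.

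To identify the law of $(N_n)$, I will condition on the indexings, which are independent of the arrival times, and define a random enumeration $u_1=v,u_2,u_3,\dots$ of vertices. This enumeration lists the vertices of $W_\infty$ in the order they are revealed: first $v$, then the new vertices of $\zi^{V_1}$ up to $V_2$ in order, then the new vertices of $\zi^{V_2}$ up to $V_3$, and so on. Each $u_k$ is a stopping-time-type choice: which vertex is revealed $k$-th depends only on the indexings and on the arrival times of $u_1,\dots,u_{k-1}$. More precisely, it depends on those times only through whether each of them is a new running minimum, since $V_{n+1}$ is the first fresh vertex with $\tau<\tau_{V_n}$. By the construction, $\tau_{V_n}$ is the minimum of $\tau$ over $W_n$, so $V_{n+1}$ is exactly the next lower record of the sequence $X_k:=\tau_{u_k}$.

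The main step is to check that $(X_k)_{k\ge1}$ is i.i.d.\ with the atomless law. This is the usual adaptive-sampling argument. Conditionally on $X_1,\dots,X_{k-1}$ and the indexings, $u_k$ is a vertex not yet revealed, and it is chosen using only that information. Its arrival time is therefore independent of the past and has the common law. Formally, this follows by summing over the possible values of $u_k$ and using the independence of the $\tau$'s.

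Consequently, the record times of $(X_k)$ are $T_n=\sharp W_n$, and $N_n=T_n-T_{n-1}$ has the law of $(I_n)$. Since the law of $(X_k)$ does not depend on the indexings, $(N_n)$ is independent of them. Applying $\Phi$ then gives the equality in law. I expect the only delicate point to be the careful formalization of this adaptive i.i.d.\ claim. One also has to make sure that the scanning never exhausts the vertices, which holds because every sphere is finite while $\T$ is infinite.
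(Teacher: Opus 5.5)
Your argument is correct and is essentially the paper's. Both rest on the same fact: the not-yet-revealed arrival times, read along the indexings, form a fresh i.i.d.\ atomless sequence whose successive lower records are the ancestors $V_{n+1}$. The paper runs this as a step-by-step induction, whereas you package it globally through one adaptively sampled i.i.d.\ sequence $(X_k)$ and the deterministic map $\Phi$. Your packaging makes explicit two points the paper's sketch leaves implicit: the counts $N_n$ are independent of the indexings, and it is the joint law of $(I_n)_{n\ge1}$ that is matched (the gaps are not independent).
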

\begin{proof}
The result follows directly by induction from the construction. Conditionally on the past up to step $n$, the unexplored arrival times, read in the random indexing around $V_n$, form a fresh i.i.d.\ atomless sequence. Since $\tau_{V_n}$ is the current lower record among the revealed times, the number of new vertices inspected before finding the next ancestor has the law of $I_{n+1}$, and the last inspected vertex is chosen through the same uniform increasing indexing. This is exactly the recursive construction of $(\hat{V}_n,\hat{W}_n)$. The identity for $I_n$ is also immediate, with $I_1=1$ corresponding to $\hat{W}_1=\{v\}$.
\end{proof}

In the rest of the paper, whenever we study a single ancestral process, we will usually favor the ``record-based'' construction above and omit the hats, so we will simply write $(V_n(v), W_n(v))_{n \ge 1}$ for $(\hat{V}_n(v), \hat{W}_n(v))_{n \ge 1}$.

\subsection{Preliminary results on the genealogical process}\label{sec:preliminaires}

We now present some preliminary results for the ancestral process. We use the construction of the previous section from the inter-arrival times $(I_k)_{k \ge 1}$ and the random indexings $\bigl((\zi_j^w)_{j \geq 1}\bigr)_{w \in \T}$ through Equations~\eqref{eq:def1process} and~\eqref{eq:def2process}. Since $\T$ is transitive, it will be enough to study the process starting from $v = o$ and, for the sake of readability, we will simply write $(V_n, W_n)_{n \ge 1}$ for $(V_n(o), W_n(o))_{n \ge 1}$.

\medskip 

For $n\ge 2$, define $R_n$ as the graph distance between $V_n$ and $V_{n-1}$:
\begin{equation}\label{eq:defR_n}
    R_n:=d(V_n,V_{n-1}).
\end{equation}
Note that $R_n$ is a function of $V_{n-1},W_{n-1},I_n$ given by
\begin{equation}\label{eq:defR_n2}
R_n = \inf \{ r\ge 0:\; \sharp \{ u\in \T\setminus W_{n-1}, d(V_{n-1},u)\le r \} \geq I_n \}.
\end{equation}
Using this equation, we can give a precise estimate of $R_n$ in terms of $I_n$. Recall that $\T_u$ denotes the descendant subtree rooted at $u$ (for $u\neq o$, it is the connected component of $\T\setminus \{u\}$ that does not contain the root $o$).

\begin{lemma}\label{lem:lienIetR}
    For any $n\ge 1$, we have the following properties:
    \begin{itemize}
        \item $W_n$ is a connected subset of $\T$. 
        \item $\T_{V_n}\cap W_n=\{V_n\}$. 
    \end{itemize} Moreover, for any $n\ge 2$, we have
    \begin{equation}\label{eq:In}
        \frac{I_n}{K+1}\le K^{R_n}\le K I_n.
    \end{equation}
\end{lemma}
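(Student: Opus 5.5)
We argue by induction on $n$, proving the two structural properties jointly; the bound \eqref{eq:In} then follows from them and the characterization \eqref{eq:defR_n2} of $R_n$. For $n=1$ we have $W_1=\{o\}=\{V_1\}$, so both properties are trivial. Assume they hold at step $n$.

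\textbf{Connectedness.} Recall that $W_{n+1}=W_n\cup\{\zi_j^{V_n}: j\le J_n\}$. Because the indexing centered at $V_n$ is increasing, the added set consists of a full ball $B(V_n,R_{n+1}-1)$ together with part of the sphere $S(V_n,R_{n+1})$. Every vertex of this set is joined to $V_n$ by a geodesic whose interior points are at distance less than $R_{n+1}$ from $V_n$, so these interior points lie in the ball. Hence the added set is connected and contains $V_n\in W_n$. The union of two connected sets sharing a vertex is connected, so $W_{n+1}$ is connected.

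\textbf{The property $\T_{V_{n+1}}\cap W_{n+1}=\{V_{n+1}\}$.} By construction $V_{n+1}$ is the last vertex indexed, at distance $R:=R_{n+1}$ from $V_n$. Take $u\in \T_{V_{n+1}}\setminus\{V_{n+1}\}$. Its geodesic to the root passes through $V_{n+1}$.
\begin{itemize}
\item Suppose $V_n\notin \T_{V_{n+1}}$. Then the geodesic from $V_n$ to $u$ passes through $V_{n+1}$, so $d(V_n,u)>R$. Thus $u$ is not among the newly indexed vertices.
\item The case $V_n\in\T_{V_{n+1}}$ (with $V_n\neq V_{n+1}$) must be ruled out. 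Since $V_n\in \T_{V_n}$ and $\T_{V_n}\cap W_n=\{V_n\}$, every vertex of $\T_{V_n}$ other than $V_n$ is unexplored. In the record construction, $V_{n+1}$ is found after $I_{n+1}$ new vertices. Here the key point is that the rule chooses the $I_{n+1}$-th new vertex, so $V_{n+1}$ is at distance $R$ from $V_n$, and a descendant $w$ of $V_{n+1}$ with $V_n\in\T_{V_{n+1}}$ would have to be an ancestor of $V_n$ in the rooted tree.
\end{itemize}

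It remains to exclude that $u$ was already in $W_n$. Here I will use connectedness. Since $W_n$ is connected and contains $V_n$, if $u\in W_n$ then the whole geodesic from $V_n$ to $u$ lies in $W_n$. This geodesic passes through $V_{n+1}$, which would put $V_{n+1}\in W_n$, contradicting that $V_{n+1}$ is a new vertex. The same connectedness argument handles the second case above. If $V_{n+1}$ is an ancestor (toward the root) of $V_n$, then $V_n\in\T_{V_{n+1}}$. I expect to need this case to be excluded by the inductive structure: $W_n$ contains the geodesic from $o$ to $V_n$ (by connectedness, since $o\in W_n$), so the ancestors of $V_n$ are already explored and cannot be $V_{n+1}$. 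Hence $V_n\notin\T_{V_{n+1}}$, and the first case applies.

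\textbf{The bound \eqref{eq:In}.} By \eqref{eq:defR_n2}, the ball $B(V_{n-1},R_n)$ contains at least $I_n$ unexplored vertices. Therefore
\[
I_n\le \sharp B(V_{n-1},R_n)\le (K+1)K^{R_n}
\]
up to the exact constant; a careful count using $\sharp B(v,r)=1+(K+1)\frac{K^r-1}{K-1}$ is needed to get exactly $I_n/(K+1)\le K^{R_n}$. For the other inequality, the ball $B(V_{n-1},R_n-1)$ contains fewer than $I_n$ unexplored vertices. It does, however, contain all of $\T_{V_{n-1}}\cap B(V_{n-1},R_n-1)$ except $V_{n-1}$ itself, and these are all unexplored by the induction hypothesis. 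That set has size $K+K^2+\dots+K^{R_n-1}\ge K^{R_n-1}$, which gives $K^{R_n-1}\le I_n$, i.e. $K^{R_n}\le KI_n$.

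The main obstacle is the second structural property: showing rigorously that $V_{n+1}$ is never an ancestor of $V_n$, which rests on $o\in W_n$ together with connectedness. The constant bookkeeping in the lower bound is only routine.
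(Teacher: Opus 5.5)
Your proof is correct and follows essentially the same route as the paper: induction on $n$, connectedness from the increasing indexing, the property $\T_{V_{n+1}}\cap W_{n+1}=\{V_{n+1}\}$ from $o\in W_n$ plus connectedness, and the same two counting bounds. The one organizational difference is that you treat $W_n$ and the newly indexed ball separately, whereas the paper applies connectedness once to $W_{n+1}\setminus\{V_{n+1}\}$.

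Two small fixes. The hedge in the upper bound is unnecessary, since $1+(K+1)\frac{K^r-1}{K-1}\le (K+1)K^r$ holds directly for $K\ge 2$. Your inequality $K+\dots+K^{R_n-1}\ge K^{R_n-1}$ fails when $R_n=1$, but that case is trivial because $I_n\ge 1$; the paper treats it separately.
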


\begin{proof} Let us first note that, by construction, $V_{n}\in W_{n}$ for all $n\ge 1$. 
    Let us now prove by induction on $n$ that $W_n$ is connected and $\T_{V_n}\cap W_n=\{V_n\}$. This holds for $n=1$. Assume it holds for $n-1$. Let $(\zi_j^{n-1})_{j\ge 1}$ be the subsequence of $(\zi_j^{V_{n-1}})_{j\ge 1}$ obtained by keeping only the terms that do not belong to $W_{n-1}$, and set
    $$Z_l:=W_{n-1}\cup \{\zi_j^{n-1}: j\le l\},$$
    so that $Z_0=W_{n-1}$ and $Z_{I_n}=W_n$. Since $V_{n-1}\in W_{n-1}$ and $\zi_j^{V_{n-1}}$ is an increasing indexing centered at $V_{n-1}$, we see by induction on $l$ that $Z_l$ is a connected subset for all $0\le l\le I_n$ and so $W_n$ is connected. Moreover, $W_n=Z_{I_n-1}\cup\{V_n\}$ and $Z_{I_n-1}$ is a connected subset which does not contain $V_n$. Hence, it is contained in one of the connected components of $\T\setminus \{V_n\}$. Since the root $o$ is in $Z_{I_n-1}$, the connected component of $\T\setminus \{V_n\}$ containing $Z_{I_n-1}$ is necessarily the one containing the root. Hence, we get 
    $$Z_{I_n-1}\cap \T_{V_n}=\emptyset,$$ 
    and so $$\T_{V_n}\cap W_n=\{V_n\}.$$

    Let us now prove \eqref{eq:In}. On the one hand, using \eqref{eq:defR_n2}, we have 

\begin{eqnarray*}
    I_n &\le & \sharp\{u\in \T\setminus W_{n-1}, d(V_{n-1},u)\le R_{n}\}\\
    &\le& \sharp\{u\in \T\setminus \{V_{n-1}\}, d(V_{n-1},u)\le R_{n}\}\\
    &= & \sum_{i=0}^{R_n-1}  (K+1)\cdot K^{i}\le (K+1)K^{R_n}.
\end{eqnarray*}
This gives the lower bound. The upper bound is trivial when $R_n = 1$. We therefore assume $R_n \geq 2$, and bound $I_n$ from below using again \eqref{eq:defR_n2} combined with the fact that $\T_{V_{n-1}}\cap W_{n-1}=\{V_{n-1}\}$ to conclude that
\begin{eqnarray*}
    I_n &> & \sharp\{u\in \T\setminus W_{n-1}, \; d(V_{n-1},u)< R_{n}\}\\
    &\ge& \sharp\{u\in \T_{V_{n-1}}\setminus\{V_{n-1}\}, \; d(V_{n-1},u)< R_{n}\}\\
    &\ge& K^{R_n-1}.
\end{eqnarray*}
\end{proof}

The process $(I_n)_{n\ge 1}$ is the inter-arrival-time process of a classical record process. Such processes have been well studied. We will use the following result, which is an easy consequence of the classical asymptotic behavior of record processes.

\begin{prop}[see Arnold \cite{arnold2011}]\label{prop:Arnold} Let $(I_n)_{n\ge 1}$ be the inter-arrival times of a record process of an i.i.d.\ sequence with diffuse distribution. Then
\begin{enumerate}
    \item[(i)] We have $$\lim_{n\to \infty } \frac{\log I_n}{n}=1 \mbox{ a.s.}.$$
    \item[(ii)] For any $n\ge 1$ and $i_2,\ldots,i_n\ge 1$,
$$\P(I_2=i_2,\ldots, I_n=i_n,\ 2^{k}\le I_k\le 3^{k} \hbox{ for all } k> n)>0.$$
    \item[(iii)] For any $c\ge 0$ and $n\ge 1$, we have $$\P(\forall m\ge 0, I_{n+m+1}\ge c \mid I_1,\ldots,I_n)\ge \P(\forall m\ge 0, I_{m+2}\ge c)>0 \quad\hbox{a.s.}.$$
\end{enumerate}
\end{prop}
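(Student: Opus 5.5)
The plan is to pass to the \emph{record values} and use their Markov structure. Take the underlying i.i.d.\ sequence uniform on $(0,1)$, which is allowed since the law of $(I_n)$ does not depend on the atomless distribution. Let $L_n := X_{T_n}$ be the $n$-th lower record value. Two classical facts drive everything.
\begin{itemize}
\item Given $L_{n}=x$, the next record $L_{n+1}$ is uniform on $(0,x)$. Hence $L_n = U_1U_2\cdots U_n$ with $(U_i)$ i.i.d.\ uniform.
\item Given $L_n=x$, the waiting time $I_{n+1}$ is geometric with success probability $x$. It is conditionally independent of everything else given $L_n$, and it is independent of $L_{n+1}/L_n$.
\end{itemize}
Concretely, one can realize $I_{n+1}=\lceil \log E_{n+1}/\log(1-L_n)\rceil$ with $E_{n+1}$ i.i.d.\ uniform, independent of the $U_i$. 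This explicit representation will be used for the monotone coupling in (iii).

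For (i), note that $-\log L_n=\sum_{i\le n}(-\log U_i)$ is a sum of i.i.d.\ $\mathrm{Exp}(1)$ variables, so $-\log L_n/n\to 1$ a.s. It then suffices to show $\log(I_{n+1}L_n)=o(n)$ a.s. Given $L_n=x$, we have $P(I_{n+1}x< n^{-2})\le n^{-2}$ (roughly; use $1-(1-x)^{k}\le kx$) and $P(I_{n+1}x> 2\log n)\le (1-x)^{2\log n/x}\le n^{-2}$. Both bounds are uniform in $x$, so Borel--Cantelli gives $|\log(I_{n+1}L_n)|\le 2\log n+O(1)$ eventually. Since $\log I_{n+1} = -\log L_n + \log(I_{n+1}L_n)$, this gives (i).

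For (ii), let $A_m:=\{2^k\le I_k\le 3^k \ \forall k>m\}$. Since $\log 2<1<\log 3$, part (i) gives $P(A_m)\to1$, so we may fix $m>n$ with $P(A_m)>0$. By the Markov property, $P(A_m\mid \mathcal F_m)=g(L_m)$ for a measurable $g$, and $g>0$ on a set $S\subset(0,1)$ of positive Lebesgue measure. Now fix deterministic values $i_k\in[2^k,3^k]$ for $n<k\le m$, and look at the event $\{I_k=i_k,\ 2\le k\le m,\ L_m\in S\}$. The joint law of $(L_1,\dots,L_m,I_2,\dots,I_m)$ has a density, with respect to Lebesgue$\otimes$counting measure, equal to a product of factors $1/L_{k-1}$ (on $0<L_k<L_{k-1}$) and $L_{k-1}(1-L_{k-1})^{i_k-1}$. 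This density is strictly positive on the whole region $\{1>L_1>\dots>L_m>0\}$. Since $\{L_m\in S\}$ has positive measure inside that region, the event has positive probability. Multiplying by $g(L_m)>0$ and integrating gives the claim. The main care needed here is exactly this step: turning ``eventually in the window'' into ``in the window from $n+1$ on'' while prescribing the first coordinates. The Markov/density argument above is how I would handle it.

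For (iii), conditionally on $I_1,\dots,I_n$ (indeed on $\mathcal F_n$), the future $(I_{n+m+1})_{m\ge0}$ is the process started from state $L_n=x<1$. Couple this with the process started from $x=1$ using the same $U_i,E_i$: the states are $x\prod U$ versus $\prod U$. Because $I$ is decreasing in the state in the representation above, the chain started from $x$ has pointwise larger inter-arrival times. The chain started from $1$ has $L'_1=U$ uniform, which reproduces exactly the law of $(I_{m+2})_{m\ge0}$. This yields the inequality in (iii). Positivity follows from (ii) with $n$ large enough that $2^k\ge c$ for $k>n$ and all prescribed $i_k\ge c$.
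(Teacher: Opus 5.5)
Your parts (i) and (ii) are fine. For (i) you actually prove the classical limit, via the SLLN for $-\log L_n$ and a two-sided Borel--Cantelli bound on $I_{n+1}L_n$, whereas the paper only cites Arnold. Part (ii) is the paper's argument (a positive-probability tail window, the Markov property at the $m$-th record, finitely many prescribed gaps), made rigorous through the explicit density on $\{1>L_1>\dots>L_m>0\}$.

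Part (iii), however, has a genuine gap: an off-by-one that your coupling cannot absorb. In your coupling, the state driving $I_{n+m+1}$ is $x\,U_{n+1}\cdots U_{n+m}$, and it is compared with $U_{n+1}\cdots U_{n+m}$ in the chain started from $1$. For $m=0$ the latter state is $1$, whose gap is identically $1$. What you actually get is $I_{n+m+1}\ge I'_{m+1}$ with $(I'_{m+1})_{m\ge0}\overset{d}{=}(I_{m+1})_{m\ge0}=(1,I_2,I_3,\dots)$. This gives only the lower bound $\P(\forall m\ge0,\ I_{m+1}\ge c)$, which is $0$ for $c>1$. The chain from $1$ reproduces the law of $(I_{m+2})_{m\ge0}$ only from its second step on, so it controls $(I_{n+m+2})_{m\ge0}$ but not $I_{n+1}$.

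This cannot be repaired while conditioning on $\mathcal F_n$. On $\{L_n=x\}$ one has $\P(I_{n+1}\ge 2\mid\mathcal F_n)=1-x$. On the positive-probability event that $L_n$ is close to $1$, this is smaller than the constant $\P(\forall m\ge0,\ I_{m+2}\ge 2)>0$. So the bound you aim for is false conditionally on $\mathcal F_n$. It holds only conditionally on $I_1,\dots,I_n$, and the proof must use the randomness of $L_n$.

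The missing step is as follows. In your representation, $L_n=L_{n-1}U_n\le U_n$. Moreover, $U_n$ is uniform and independent of $(U_1,\dots,U_{n-1},E_2,\dots,E_n)$, hence of $(I_1,\dots,I_n)$. Run the comparison chain from the state $U_n$ instead of from $1$, using the same $U_{n+1},U_{n+2},\dots$ and $E_{n+1},E_{n+2},\dots$. Its gaps then have three properties:
\begin{itemize}
\item by monotonicity, they are pointwise below $(I_{n+m+1})_{m\ge0}$;
\item they are independent of $(I_1,\dots,I_n)$;
\item they have the law of $(I_{m+2})_{m\ge0}$, since the uniform state $U_n$ plays the role of $L_1$.
\end{itemize}
This is exactly the paper's ``fresh record process started from time $1$''. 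With this change, your positivity argument via (ii) goes through as written.
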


\begin{proof}
The first part of the proposition is a classical result (see, for example, \cite{arnold2011}, p.~28). For the second point, one uses the Markov description of the lower record process: any finite admissible sequence of inter-arrival times has positive probability, and after such a finite prescription the future process is again a lower record process started from the current record value. Since \(2<e<3\), point~(i) implies that, for some sufficiently large deterministic \(N\), the tail event \(\{2^k\le I_k\le 3^k \text{ for all } k>N\}\) has positive probability; the finitely many constraints for \(n<k\le N\) can then be imposed with positive probability.

Finally, the last point follows from the standard Markov description of the record-time chain: conditionally on \(I_1,\ldots,I_n\), the future gaps are stochastically lower bounded by a fresh record inter-arrival process started from time \(1\). Point~(ii) then implies that
$\P(\forall m \ge 0,\ I_{m+2} \ge c) > 0$.
\end{proof}

We now introduce the filtration $(\mathcal{F}_n)_{n\ge 1}$ given by 
\begin{equation}\label{eq:defFn}
    \mathcal{F}_n:=\sigma\bigl((I_k)_{k\ge 1}, ((\zi_j^{V_i})_{j\ge 1})_{i< n}\bigr).
\end{equation}
Notice that $\mathcal{F}_n$ contains the whole sequence of inter-arrival times $(I_k)_{k\ge 1}$, not only the first $n$ of them.

\begin{lemma} 
For any $n\ge 1$, $V_n$, $W_n$ and $R_{n+1}$ are $\mathcal{F}_n$-measurable.  
Moreover, conditionally on $\mathcal{F}_n$, the vertex $V_{n+1}$ is uniformly distributed on the set of vertices $u\in \T\setminus W_n$ such that $d(V_n,u)=R_{n+1}$. 
\end{lemma}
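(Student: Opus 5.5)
We argue by induction on $n$, using the record-based construction \eqref{eq:def1process}--\eqref{eq:def2process}. We prove jointly that $V_n$ and $W_n$ are $\mathcal{F}_n$-measurable. The base case $n=1$ is trivial, since $V_1=o$ and $W_1=\{o\}$ are deterministic.

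For the inductive step, suppose $V_n$ and $W_n$ are $\mathcal{F}_n$-measurable. By \eqref{eq:defR_n2}, $R_{n+1}$ is a deterministic function of $(V_n,W_n,I_{n+1})$. Since $\mathcal{F}_n$ contains the whole sequence $(I_k)_{k\ge 1}$, $R_{n+1}$ is $\mathcal{F}_n$-measurable. Next, $\hat J_n$, and hence $V_{n+1}$ and $W_{n+1}$, are functions of $V_n$, $W_n$, $I_{n+1}$ and the indexing $(\zi^{V_n}_j)_{j\ge1}$. All of these are $\mathcal{F}_{n+1}$-measurable. Here the indexing $(\zi_j^{V_n})_j$ means the family $(\zi_j^w)_j$ evaluated at the $\mathcal{F}_n$-measurable random vertex $w=V_n$, which is measurable with respect to $\mathcal{F}_{n+1}$. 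This closes the induction, and in particular gives the $\mathcal{F}_n$-measurability of $V_n$, $W_n$ and $R_{n+1}$.

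For the conditional law of $V_{n+1}$, we first need a structural fact: the vertices $V_1,\dots,V_n$ are distinct. Indeed, each $V_{i+1}$ is chosen outside $W_i$, and $W_i\ni V_1,\dots,V_i$. Consequently the indexings $(\zi_j^{V_n})_j$ used at step $n$ belong to a family $(\zi^w)$ with $w\neq V_1,\dots,V_{n-1}$. We then condition on the event $\{V_n=w\}$ together with the information in $\mathcal{F}_n$. On this event, $\mathcal{F}_n$ depends only on $(I_k)$ and on $(\zi^{u})_{u\ne w}$. These are independent of $(\zi^w_j)_j$, so conditionally on $\mathcal{F}_n$ the indexing around $V_n$ is still a uniform increasing indexing centered at $V_n$. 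Making this step rigorous amounts to decomposing over the countably many values $w$ of $V_n$.

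Finally, let $m$ be the number of vertices $u\notin W_n$ with $d(V_n,u)<R_{n+1}$. By definition of $R_{n+1}$, $V_{n+1}$ is the $(I_{n+1}-m)$-th element, in the order induced by the indexing, among the vertices of $S(V_n,R_{n+1})\setminus W_n$. By property (iii), the ordering on the sphere $S(V_n,R_{n+1})$ is uniform, so its restriction to the deterministic (given $\mathcal{F}_n$) subset $S(V_n,R_{n+1})\setminus W_n$ is also uniform. The element at a fixed, $\mathcal{F}_n$-measurable rank is therefore uniformly distributed on that subset. The main obstacle is the careful justification of the conditional independence at the random center $V_n$; the remaining steps are bookkeeping.
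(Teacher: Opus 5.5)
Your proposal is correct and follows the same route as the paper. You prove measurability by induction. You then observe that, conditionally on $\mathcal{F}_n$, the indexing at $V_n$ is a fresh uniform increasing indexing centered at $V_n$, so the vertex at the $\mathcal{F}_n$-measurable rank $I_{n+1}-m$ in $S(V_n,R_{n+1})\setminus W_n$ is uniform on that set. Your use of the distinctness of $V_1,\dots,V_n$, which also gives $\{V_n=w\}\in\sigma\bigl((I_k)_k,(\zi^u)_{u\neq w}\bigr)$, is exactly what makes rigorous the paper's terse claim that $(\zi_j^{V_n})_j$ is ``independent of $\mathcal{F}_n$''.
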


\begin{proof}
The measurability assertion follows by induction. Indeed, $V_1$ and $W_1$ are deterministic, and if $V_n$ and $W_n$ are $\mathcal F_n$-measurable, then $R_{n+1}$ is $\mathcal F_n$-measurable by \eqref{eq:defR_n2}, since all the variables $(I_k)_{k\ge1}$ are included in $\mathcal F_n$. Once the indexing centered at $V_n$ is added, the definitions \eqref{eq:def1process} and \eqref{eq:def2process} give $V_{n+1}$ and $W_{n+1}$. The conditional distribution of $V_{n+1}$ follows from the fact that $(\zi_j^{V_n})_{j\ge 1}$ is independent of $\mathcal{F}_n$ and is chosen uniformly among the increasing indexings centered at $V_n$.
\end{proof}

\begin{prop}\label{prop:V_nconditionnelle}
    For any $\varepsilon\in (0,1)$ and $n\ge 1$,
    $$\P(|V_{n+1}|< |V_{n}|+R_{n+1}(1-\varepsilon) \mid \mathcal{F}_{n})\le 2K^{-\frac{\varepsilon R_{n+1}}{2}}.$$
\end{prop}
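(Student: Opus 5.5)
Conditionally on $\mathcal F_n$, the quantities $V_n$, $W_n$ and $R:=R_{n+1}$ are fixed, and by the last lemma $V_{n+1}$ is uniform on
\[
A:=\{u\in \T\setminus W_n:\ d(V_n,u)=R\}.
\]
So it suffices to show that the fraction of $u\in A$ with $|u|<|V_n|+(1-\varepsilon)R$ is at most $2K^{-\varepsilon R/2}$. We may assume $\varepsilon R\ge 1$, since otherwise the right-hand side is at least $2K^{-1/2}\ge 1$ for $K\ge 2$.

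The plan is to bound $\sharp A$ from below and the bad set from above. For the lower bound, Lemma~\ref{lem:lienIetR} gives $\T_{V_n}\cap W_n=\{V_n\}$. Hence every descendant of $V_n$ at distance $R$ lies in $A$, so $\sharp A\ge K^R$; for $n=1$ we have $V_1=o$ and this still holds.

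For the bad set, take $u$ with $d(V_n,u)=R$ and let $w$ be the meeting point of the geodesic from $V_n$ to $u$ with the geodesic from $V_n$ to $o$. If $w$ is at distance $j$ above $V_n$, then $|w|=|V_n|-j$ and $|u|=|V_n|-j+(R-j)=|V_n|+R-2j$. The condition $|u|<|V_n|+(1-\varepsilon)R$ is therefore equivalent to $j>\varepsilon R/2$. For each such $j$ (with $j\le \min(R,|V_n|)$), the number of $u$ is at most $K^{R-j}$: when $j<R$, $u$ lies in one of the $K-1$ subtrees of $w$ off both geodesics, giving at most $(K-1)K^{R-j-1}\le K^{R-j}$ vertices, and when $j=R$ there is a single vertex. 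Summing over $j>\varepsilon R/2$ gives at most
\[
K^R\sum_{j>\varepsilon R/2}K^{-j}\le K^R\,\frac{K^{-\varepsilon R/2}}{K-1}\cdot\ldots\le 2K^{R-\varepsilon R/2},
\]
using a geometric sum together with $K/(K-1)\le 2$ (the first omitted index is at least $\varepsilon R/2$).

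Dividing by $\sharp A\ge K^R$ gives the claimed bound $2K^{-\varepsilon R/2}$. The only delicate point is the lower bound on $\sharp A$, and this is exactly what the structural fact $\T_{V_n}\cap W_n=\{V_n\}$ from Lemma~\ref{lem:lienIetR} provides. The remaining work is bookkeeping of the geometric sum with the strict versus non-strict inequality on $j$.
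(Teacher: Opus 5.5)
Your proof is correct and is essentially the paper's argument: the same uniform conditioning on $A$, the same lower bound $\sharp A\ge K^{R}$ from $\T_{V_n}\cap W_n=\{V_n\}$, and the same meeting-point criterion (meeting point more than $\varepsilon R/2$ above $V_n$). The only difference is that the paper counts the bad set at once by placing it inside the sphere of radius $R-\lceil \varepsilon R/2\rceil$ around the ancestor of $V_n$ at height $|V_n|-\lceil\varepsilon R/2\rceil$, whereas you sum over layers $j$.

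A few harmless slips should be fixed:
\begin{itemize}
\item The claim $2K^{-1/2}\ge 1$ fails for $K\ge 5$. The reduction to $\varepsilon R\ge 1$ is unnecessary anyway, since your layer sum works for every $\varepsilon R>0$.
\item The geometric sum should read $\sum_{j\ge j_0}K^{-j}=\frac{K}{K-1}K^{-j_0}\le 2K^{-\varepsilon R/2}$, with $j_0$ the least integer exceeding $\varepsilon R/2$. Your intermediate $\frac{K^{-\varepsilon R/2}}{K-1}$ is smaller than the sum unless $\varepsilon R/2$ is an integer.
\item When $w=o$ there are $K$ side subtrees, not $K-1$; this still gives at most $K^{R-j}$ vertices.
\end{itemize}
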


\begin{proof} First note that the proposition is trivial for $n=1$ since $V_1=o$ and so $|V_2|=R_2$. Assume now that $n>1$. Conditionally on $\mathcal{F}_n$, $V_{n+1}$ is uniform on the set of vertices $u\in \T\setminus W_n$ such that $d(V_n,u)=R_{n+1}$. Hence,
\begin{equation}\label{eq:probaV}
    \P(|V_{n+1}|< |V_{n}|+R_{n+1}(1-\varepsilon) \mid \mathcal{F}_{n})=\frac{\sharp \{ u\in \T\setminus W_n,\; d(V_n,u)=R_{n+1}, \; |u|<|V_n|+R_{n+1}(1-\varepsilon)\}}{\sharp \{ u\in \T\setminus W_n,\; d(V_n,u)=R_{n+1}\}}.
\end{equation}
    First, recalling that $\T_{V_n} \cap W_n=\{V_n\}$, we get 
    $$\sharp \{ u\in \T\setminus W_n,\; d(V_n,u)=R_{n+1}\}\ge \sharp \{ u\in \T_{V_n}, \;d(V_n,u)=R_{n+1}\}= K^{R_{n+1}}.$$
    Now let $u$ be such that $d(u,V_n)=R_{n+1}$ and $|u|<|V_n|+R_{n+1}(1-\varepsilon)$.
    Given two vertices $u,w$, we denote by $w\wedge u$ their common ancestor of maximal height. Using that $2|w\wedge u|=|u|+|w|-d(u,w)$, we get 
    $$2|V_n\wedge u|= |V_n|+ |u|-d(u,V_n)< 2|V_n|-\varepsilon R_{n+1},$$
     and so 
    $$|V_n\wedge u|< |V_n|-\frac{\varepsilon R_{n+1}}{2}.$$
    If $|V_n|<\lceil\varepsilon R_{n+1}/2\rceil$, this is impossible and the set under consideration is empty. Otherwise, let $w$ be the vertex on the path from the root to $V_n$ such that $|w|=|V_n|-\lceil\varepsilon R_{n+1}/2\rceil$. We get $d(w,u)=d(u,V_n)-d(V_n,w)= R_{n+1}- \lceil\frac{\varepsilon R_{n+1}}{2}\rceil$. Thus,
    \begin{align*}
    &\sharp \{ u\in \T\setminus W_n:\; d(V_n,u)=R_{n+1}, \; |u|<|V_n|+R_{n+1}(1-\varepsilon)\}\\
    &\qquad\le \sharp \left\{ u\in \T:\; d(w,u)=R_{n+1}- \left\lceil{\frac{\varepsilon R_{n+1}}{2}}\right\rceil\right\}
    \le 2K^{(1-\varepsilon/2)R_{n+1}}.
    \end{align*}
    Plugging this in \eqref{eq:probaV}, we conclude that
    \begin{equation*}
    \P(|V_{n+1}|< |V_{n}|+R_{n+1}(1-\varepsilon) \mid \mathcal{F}_{n})\le\frac{2K^{(1-\varepsilon/2)R_{n+1}}}{K^{R_{n+1}}}=2K^{-\frac{\varepsilon R_{n+1}}{2}}.
\end{equation*}
\end{proof}


\begin{corollary}\label{coro:V_nenn2} The following limits hold a.s.:

$$\lim_{n\to \infty} \frac{ R_{n}}{n}=  \frac{1}{\log K}\qquad \hbox{ and }\qquad  \lim_{n\to \infty} \frac{ |V_{n}|}{n^2}=  \frac{1}{2\log K}.$$
\end{corollary}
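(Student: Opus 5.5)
The first limit comes from Lemma~\ref{lem:lienIetR} combined with Proposition~\ref{prop:Arnold}(i); the second comes from summing the jumps and using Proposition~\ref{prop:V_nconditionnelle} to show that each jump moves almost entirely away from the root.

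\textbf{Step 1: the jumps.} Taking logarithms in \eqref{eq:In} gives
\[
\frac{\log I_n-\log(K+1)}{\log K}\le R_n\le \frac{\log I_n+\log K}{\log K}.
\]
By Proposition~\ref{prop:Arnold}(i), $\log I_n/n\to 1$ almost surely. Dividing by $n$ therefore yields $R_n/n\to 1/\log K$ almost surely.

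\textbf{Step 2: an upper bound on the height.} By the triangle inequality, $|V_n|\le\sum_{k=2}^n R_k$. Step~1 gives $\sum_{k\le n}R_k\sim n^2/(2\log K)$, so
\[
\limsup_{n\to\infty}\frac{|V_n|}{n^2}\le \frac{1}{2\log K}.
\]

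\textbf{Step 3: the matching lower bound.} Fix $\varepsilon\in(0,1)$ and let $A_n$ be the event
\[
A_n=\bigl\{|V_{n+1}|<|V_n|+(1-\varepsilon)R_{n+1}\bigr\}.
\]
Proposition~\ref{prop:V_nconditionnelle} gives $\P(A_n\mid\mathcal F_n)\le 2K^{-\varepsilon R_{n+1}/2}$, where $R_{n+1}$ is $\mathcal F_n$-measurable. This bound is summable only on the event where $R_n$ grows, so I would work on that event. Let $\Omega_N=\{R_{k}\ge k/(2\log K)\ \text{for all } k\ge N\}$. Step~1 gives $\P(\Omega_N)\to1$. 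Because $\mathcal F_n$ contains the whole sequence $(I_k)_{k\ge1}$, the event $\Omega_N$ is $\mathcal F_n$-measurable for every $n$. Hence
\[
\P(A_n\cap\Omega_N)=\E\bigl[\mathbf 1_{\Omega_N}\P(A_n\mid\mathcal F_n)\bigr]\le 2K^{-\varepsilon (n+1)/(4\log K)}\quad\text{for } n\ge N,
\]
which is summable in $n$. By Borel--Cantelli, on $\Omega_N$ only finitely many $A_n$ occur almost surely. Letting $N\to\infty$, the same holds almost surely. Consequently $|V_{n+1}|-|V_n|\ge(1-\varepsilon)R_{n+1}$ for all large $n$, and therefore
\[
\liminf_{n\to\infty}\frac{|V_n|}{n^2}\ge\frac{1-\varepsilon}{2\log K}.
\]
Letting $\varepsilon\downarrow0$ along a countable sequence gives the matching lower bound, which with Step~2 proves the corollary.

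The only delicate point is the conditioning in Step~3. It is resolved by the fact that $\mathcal F_n$ includes all the $I_k$, which lets the almost-sure growth of $R_n$ be inserted inside the conditional Borel--Cantelli argument. An alternative is the conditional (Lévy) Borel--Cantelli lemma, applied on the event $\sum_n K^{-\varepsilon R_{n+1}/2}<\infty$, which holds almost surely by Step~1.
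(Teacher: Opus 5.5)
Your argument follows the paper's proof. You use Lemma~\ref{lem:lienIetR} with Proposition~\ref{prop:Arnold}(i) for $R_n/n$, the bound $|V_{n+1}|-|V_n|\le R_{n+1}$ for the upper bound, and Proposition~\ref{prop:V_nconditionnelle} with Borel--Cantelli for the lower bound. The paper cites the conditional Borel--Cantelli lemma directly, which is the alternative you mention at the end.

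However, the justification of your main version of Step~3 is wrong: $\Omega_N$ is not $\mathcal F_n$-measurable. By \eqref{eq:defR_n2}, $R_k$ depends on $I_k$ and also on $(V_{k-1},W_{k-1})$. For $k>n+1$, these depend on the indexings $(\zi_j^{V_i})_{j\ge1}$ with $i\ge n$, which are not in $\mathcal F_n$; only the $I_k$ are. So the equality $\P(A_n\cap\Omega_N)=\E\bigl[\mathbf 1_{\Omega_N}\P(A_n\mid\mathcal F_n)\bigr]$ is unjustified, and the ``delicate point'' is not resolved by the reason you give.

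The fix is immediate. For $n+1\ge N$ you have $\Omega_N\subset\{R_{n+1}\ge (n+1)/(2\log K)\}$, and this single-index event is $\mathcal F_n$-measurable. Hence
$$\P(A_n\cap\Omega_N)\le \E\bigl[\mathbf 1_{\{R_{n+1}\ge (n+1)/(2\log K)\}}\P(A_n\mid\mathcal F_n)\bigr]\le 2K^{-\varepsilon(n+1)/(4\log K)},$$
which is what you needed.

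Alternatively, define $\Omega_N$ through the inter-arrival times, e.g.\ $\{I_k\ge (K+1)e^{k/2}\ \text{for all } k\ge N\}$. This forces $R_k\ge k/(2\log K)$ by \eqref{eq:In}, is genuinely $\mathcal F_n$-measurable for every $n$, and still has probability tending to $1$. The paper uses the same device in the proof of Proposition~\ref{prop:keyresult}.

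A third option is Lévy's conditional Borel--Cantelli lemma, noting that $A_n\in\mathcal F_{n+1}$. With any of these, the rest of your proof goes through unchanged.
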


\begin{proof} The limit of $R_n/n$ is a direct consequence of Lemma \ref{lem:lienIetR} and Proposition \ref{prop:Arnold}.
Moreover, for any fixed $\varepsilon\in(0,1)$, this linear growth of $R_n$ makes the upper bounds in Proposition~\ref{prop:V_nconditionnelle} summable. Hence, by the conditional Borel-Cantelli lemma, there exists a.s. $N$ such that, for $n\ge N$,
\begin{equation}\label{eq:minVn}
    |V_{n+1}| \ge |V_{n}|+ R_{n+1}(1-\varepsilon).
\end{equation}
Thus, for all sufficiently large $n$,
$$ R_{n+1}(1-\varepsilon) \le |V_{n+1}| - |V_{n}| \le  R_{n+1}.$$
Using the asymptotic
$$\sum_{k=2}^n R_k \sim \frac{n^2}{2\log K},$$
and letting $\varepsilon$ tend to 0 yields 
$$\lim_{n\to \infty} \frac{ |V_{n}|}{n^2}=  \frac{1}{2\log K}\qquad a.s.$$
\end{proof}

\section{Study of $\mathcal{G}$ without seeds}\label{s:Gwithoutseed}
The aim of this section is to prove Theorem~\ref{thm:GwithoutSeeds}. We do not follow the order of the statements. Instead, the proof is organized around the geometry of ancestral paths: we first show that each component has an asymptotic direction, then use local constructions to produce many components, and finally prove that distinct components have distinct directions and that descendant sets are finite. More precisely, we proceed as follows:
\begin{itemize}
    \item Item 3: we prove that each connected component of $\mathcal{G}$ admits a limiting direction;
    \item Items 1 and 4: we prove that $\mathcal{G}$ has infinitely many connected components, that for every subtree $\T_v$ there exists a component whose limiting direction belongs to $\partial\T_v$, and we derive a lower bound on the number of components intersecting the sphere of radius $n$;
    \item Item 5: we show that two distinct connected components almost surely have distinct limiting directions;    
    \item Item 2: we prove that every vertex almost surely has finitely many descendants.
\end{itemize}
\subsection{Proof of Theorem \ref{thm:GwithoutSeeds}, Item 3}
We begin by proving that each connected component of $\mathcal{G}$ has a limiting direction. Since $\T$ is vertex-transitive, it is sufficient to prove this property for the connected component containing the root. 
\begin{lemma} Let $\Gamma(o)=(V_1,V_2,\ldots)$ be the ancestral path of the root. Almost surely, for all sufficiently large $n$,
\[
 |V_{n+1} \wedge V_n| > |V_n \wedge V_{n-1}|,
\]
where $V_n\wedge V_{n-1}$ denotes the common ancestor of $V_n$ and $V_{n-1}$ of maximal height.
\end{lemma}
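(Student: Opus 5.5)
We will work with the identity $2|u\wedge w|=|u|+|w|-d(u,w)$ used in the proof of Proposition~\ref{prop:V_nconditionnelle}. Applied to consecutive ancestors, it gives
\[
2|V_{n+1}\wedge V_n| = |V_{n+1}|+|V_n|-R_{n+1},\qquad 2|V_n\wedge V_{n-1}| = |V_n|+|V_{n-1}|-R_n .
\]
The claim $|V_{n+1}\wedge V_n|>|V_n\wedge V_{n-1}|$ is therefore equivalent to
\[
|V_{n+1}|-|V_{n-1}| > R_{n+1}-R_n .
\]
So it suffices to compare the height increments of the ancestral path with the increments of the jump sizes $R_n$.

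By the proof of Corollary~\ref{coro:V_nenn2}, inequality \eqref{eq:minVn} holds for all large $n$, for any fixed $\varepsilon\in(0,1)$. It gives $|V_{n+1}|-|V_n|\ge (1-\varepsilon)R_{n+1}$ and $|V_n|-|V_{n-1}|\ge (1-\varepsilon)R_n$. Summing, we get
\[
|V_{n+1}|-|V_{n-1}|\ge (1-\varepsilon)(R_{n+1}+R_n).
\]
It then suffices to show $(1-\varepsilon)(R_{n+1}+R_n)>R_{n+1}-R_n$, that is, $(2-\varepsilon)R_n>\varepsilon R_{n+1}$. Corollary~\ref{coro:V_nenn2} gives $R_{n+1}/R_n\to 1$ a.s. With, say, $\varepsilon=1/2$ the required inequality reads $3R_n>R_{n+1}$, which holds for all large $n$. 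Since $R_n\to\infty$, the inequality is in fact strict, and the lemma follows.

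The only delicate point is applying \eqref{eq:minVn} at two consecutive indices and making sure it holds eventually, which the conditional Borel--Cantelli argument in Corollary~\ref{coro:V_nenn2} already provides. Even a crude ratio bound on $R_{n+1}/R_n$ is enough here; we do not need the precise growth constant. The main obstacle is therefore just the bookkeeping of the identity for $|u\wedge w|$, and no new probabilistic estimate is required.
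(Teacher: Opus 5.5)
Your argument is correct and is essentially the paper's proof. It uses the same identity $2|u\wedge w|=|u|+|w|-d(u,w)$, the same inequality \eqref{eq:minVn} with $\varepsilon=1/2$ at two consecutive indices, and the same eventual bound on $R_{n+1}/R_n$ from Corollary~\ref{coro:V_nenn2}. The only difference is cosmetic. You compare the two common-ancestor heights exactly, which needs $R_{n+1}<3R_n$. The paper instead uses the cruder bound $|V_n\wedge V_{n-1}|\le |V_{n-1}|$, needs $R_{n+1}<2R_n$, and in fact proves the slightly stronger $|V_n\wedge V_{n+1}|>|V_{n-1}|$.
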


\begin{proof}
Recall that the ancestral path of the root $\Gamma(o) = (V_1,V_2,\ldots)$ is defined in Section~\ref{s:genealogicalprocess}. Recall also the notation \( R_n := d(V_n, V_{n-1}) \). Note that 
$$2|V_n\wedge V_{n+1}|=|V_n|+|V_{n+1}|-d(V_n,V_{n+1}).$$
Corollary \ref{coro:V_nenn2} states that $R_n/n$ tends a.s. to $1/\log K$, so in particular, for $n$ large enough, we have
$$R_n> \frac{R_{n+1}}{2}.$$
Moreover, we have proved (see \eqref{eq:minVn}) that, for $n$ large enough, it holds that
$$|V_{n+1}|\ge |V_{n}|+\frac{R_{n+1}}{2}.$$
Thus, a.s., for $n$ large enough, we have 
$$2|V_n\wedge V_{n+1}|=|V_n|+|V_{n+1}|-R_{n+1}\ge 2|V_n| -\frac{R_{n+1}}{2} \ge  2|V_{n-1}|+R_{n} -\frac{R_{n+1}}{2}> 2|V_{n-1}|.$$
Hence
$$|V_{n}\wedge V_{n+1}|> |V_{n}\wedge V_{n-1}|.$$
\end{proof}

\begin{proof}[Proof of Theorem \ref{thm:GwithoutSeeds}, Item 3]
Any connected component of $\mathcal{G}$ is an oriented tree and $u,v\in \T$ are in the same connected component if and only if their ancestral paths coalesce. So, to prove Item 3 of Theorem \ref{thm:GwithoutSeeds}, it is sufficient to find an infinite ray $z=(z_i)_{i\ge 1}\in \partial \T$ such that, for every $i \ge 1$, the vertices of $\Gamma(o)$ eventually belong to $\T_{z_i}$.

\begin{figure}
    \centering
\includegraphics[width=11cm]{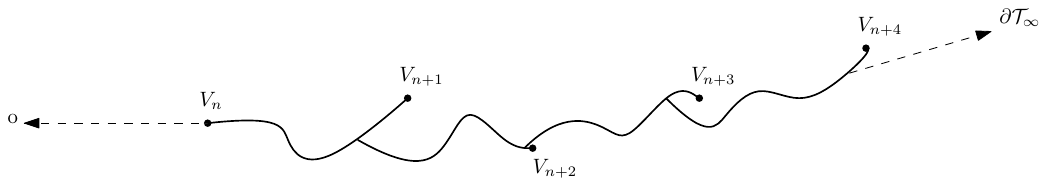}
\caption{\small{Illustration of the geometry of an ancestral path for large $n$.}}\label{fig:path_image}
\end{figure}

Note that $|V_{n}\wedge V_{n+1}|> |V_{n}\wedge V_{n-1}|$ implies that 
$V_{n}\wedge V_{n+1}\in \T_{V_{n}\wedge V_{n-1}}$ (see Figure \ref{fig:path_image} for an illustration of a path satisfying this condition). Thus, by the previous lemma, there exists an
infinite ray $z=(z_i)_{i\ge 1}$ that contains the vertices $V_{n}\wedge V_{n+1}$ for large $n$. By construction, for every $i \ge 1$, the vertices of $\Gamma(o)$ eventually belong to $\T_{z_i}$ and so the ancestral path of the root has a limiting direction given by $z$.

\end{proof}

\subsection{Proof of Theorem \ref{thm:GwithoutSeeds}, Items 1 and 4}

We prove here that $\mathcal{G}$ has infinitely many connected components and, in order  
to control the number of components in each direction of the space, we prove that, 
with positive probability, a given neighbor of the root is never explored by the ancestral process of the root.

\begin{prop}\label{prop:keyresult} Let $v\in \T$ be a neighbor of the root. Let $(V_n,W_n)$ be the ancestral process of the root and $W_\infty:=\cup_{n\ge 1} W_n$. Then
$$\P(v\notin W_\infty) = \P(\T_v \cap W_\infty = \emptyset) > 0.$$ 
\end{prop}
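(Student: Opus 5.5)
The equality $\P(v\notin W_\infty)=\P(\T_v\cap W_\infty=\emptyset)$ comes from connectivity. By Lemma~\ref{lem:lienIetR}, each $W_n$ is connected and contains $o$. Any vertex of $\T_v$ lies on the far side of $v$ from $o$, so a connected set containing $o$ meets $\T_v$ only if it contains $v$. Hence $\{v\notin W_\infty\}=\{\T_v\cap W_\infty=\emptyset\}$, and only positivity needs proof.

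The plan is to force, with positive probability, the ancestral path to leave the root through a neighbor $v'\neq v$ and then escape so fast that the explored balls never wrap back to $v$. Work with the record construction and the filtration $\mathcal F_n$.

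First I choose $I_2=1$, so that $R_2=1$. With probability $K/(K+1)$, $V_2$ is then a neighbor $v'\neq v$, and $W_2=\{o,v'\}$. I want $V_n\in\T_{v'}$ for all $n\ge2$, with the exploration never reaching $v$. Let $w_n:=V_n\wedge V_{n+1}$ in the tree rooted at $o$. The ball of radius $R_{n+1}$ around $V_n$ reaches $v$ only if $R_{n+1}\ge d(V_n,v)=|V_n|+1$.

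So it suffices to have $|V_n|\ge R_{n+1}$ for every $n\ge2$, and also that each new ancestor $V_{n+1}$ stays in $\T_{v'}$. Both follow if $|V_{n+1}|\ge |V_n|+R_{n+1}(1-\varepsilon)$ for all $n\ge2$ and $R_{n+1}\le |V_n|$ for all $n$. Indeed, the first condition keeps $V_n\wedge V_{n+1}$ at height at least $|V_n|-\varepsilon R_{n+1}/2>0$, hence inside $\T_{v'}$.

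The second condition involves only the $I$'s and the heights. I use Proposition~\ref{prop:Arnold}(ii) to prescribe $I_2=\dots=I_{n_0}=1$, or some other small initial values, together with $2^k\le I_k\le 3^k$ for $k>n_0$. Lemma~\ref{lem:lienIetR} then gives $R_k$ between roughly $k\log 2/\log K-2$ and $k\log3/\log K+1$. On the event that the first condition holds, $|V_n|\gtrsim(1-\varepsilon)\sum_{k\le n}R_k$, which is of order $n^2$ and dominates $R_{n+1}=O(n)$ once $n$ is large.

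The main obstacle is the start, where $|V_n|$ is small compared with $R_{n+1}$. I would handle it by taking the first $n_0$ steps with explicit positive-probability choices. For example, set $I_k$ small so that $R_k=1$ and the path walks straight down into $\T_{v'}$, with each step chosen downward; this has positive probability because the sphere of radius 1 minus $W_n$ is finite. Then $|V_{n_0}|=n_0-1$. Choosing $n_0$ large relative to the constants makes $|V_n|\ge R_{n+1}$ hold for all $n$ on the good event.

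The remaining step is to control the first condition for all $n>n_0$ simultaneously. Conditionally on $\mathcal F_n$, with the $I$'s fixed, Proposition~\ref{prop:V_nconditionnelle} bounds the failure probability at step $n$ by $2K^{-\varepsilon R_{n+1}/2}$. These conditional probabilities multiply along the steps. Since $R_{n+1}\ge cn$ on the prescribed $I$-event, the product $\prod_{n>n_0}(1-2K^{-\varepsilon R_{n+1}/2})$ is bounded below by a positive constant once $n_0$ is large enough that each factor is positive.

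Combining the positive probability of the $I$-event, the finite initial choices and this product gives $\P(v\notin W_\infty)>0$.
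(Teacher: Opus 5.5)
Your overall architecture matches the paper's. The connectivity argument for the equality of the two probabilities is correct. So is the plan of imposing $2^k\le I_k\le 3^k$ for $k>n_0$ via Proposition~\ref{prop:Arnold}(ii) and controlling the height increments through the product $\prod_{n>n_0}(1-2K^{-\varepsilon R_{n+1}/2})$ from Proposition~\ref{prop:V_nconditionnelle}.

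The gap is in your treatment of the start, precisely at the junction step $n=n_0$. Walking down with unit steps gives $|V_{n_0}|=n_0-1$, i.e.\ $d(V_{n_0},v)=n_0$. On $\{2^{n_0+1}\le I_{n_0+1}\le 3^{n_0+1}\}$, Lemma~\ref{lem:lienIetR} only places $R_{n_0+1}$ between about $(n_0+1)\log 2/\log K-\log_K(K+1)$ and $(n_0+1)\log 3/\log K+1$. Both sides are linear in $n_0$, so enlarging $n_0$ does not help. Your construction works for $K\ge 4$, where $\log 3<\log K$. For $K\in\{2,3\}$, however, the requirement $R_{n_0+1}\le |V_{n_0}|$ is violated on part of your good event. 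For $K=2$ it is violated on all of it, since there $R_{n_0+1}\ge n_0$.

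This is not just a lossy sufficient condition. For $K=2$ one has $\sharp\bigl(B(V_{n_0},n_0)\setminus W_{n_0}\bigr)=3\cdot 2^{n_0}-2-n_0$. So whenever $I_{n_0+1}\ge 2^{n_0+2}$, which your window allows, one gets $R_{n_0+1}\ge n_0+1$. Then the whole ball $B(V_{n_0},n_0)\ni v$ is explored and $v\in W_{n_0+1}$. Your heuristic that $|V_n|$ is of order $n^2$ only helps for $n\gg n_0$. At $n=n_0$ the sum $\sum_{n_0<k\le n}R_k$ is empty and everything rests on $|V_{n_0}|$.

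The missing ingredient is that the prescribed initial segment must make $|V_{n_0}|$ superlinear in $n_0$. The paper (Lemma~\ref{lemm:AcapB}) achieves this by taking, at each initial step $i\ge 3$, the largest jump that still cannot reach $v$. It prescribes $V_i=w_i\in\T_{w_{i-1}}$ with $|w_i|=2|w_{i-1}|$, i.e.\ $R_i=|V_{i-1}|<d(V_{i-1},v)$. This is a finite constraint on $I_i$ and on the tie-breaking, so it has positive probability. It gives $|V_{n_0}|=2^{n_0-2}$, which exceeds $R_{n_0+1}\le 2(n_0+1)/\log K$ for $n_0\ge 7$. With this replacement for your unit-step walk, the rest of your argument goes through as in the paper.
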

This proposition shows that, with positive probability, it is not necessary to reveal the arrival times of the vertices in $\T_v$ in order to determine the ancestral line of the root. In particular, on this event, the ancestral path of $o$ is unaffected by the presence of a seed anywhere in the subtree $\T_v$.

\medskip 
We use the following lemma.

\begin{lemma}\label{lemm:AcapB} Let $v\in \T$ be a neighbor of the root and let $n_0\ge 7$. Let $w_1:=o$, let $w_2$ be a neighbor of the root distinct from $v$, and, for $3\le i\le n_0$, let $w_i$ be a vertex of $\T_{w_{i-1}}$ such that $|w_i|=2|w_{i-1}|$. Set 
    $$
    \mathcal{A}:=\{ V_i=w_i \hbox{ and } v \notin W_i\hbox{ for } 1\leq i\le n_0\}.
    $$
    $$\mathcal{B}:=\Big\{\forall n> n_0,\, \frac{n}{2}\le \log(K)R_n \le 2n \mbox{ and } |V_n|\ge |V_{n-1}|+\frac{R_n}{2}\Big\}.$$
Then, we have $$\mathcal{A}\cap \mathcal{B} \subset \{v\notin W_\infty\}.$$
\end{lemma}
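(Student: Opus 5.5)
The plan is to show by induction on $n\ge n_0$ that, on $\mathcal A\cap\mathcal B$, we have $W_n\cap \T_v=\emptyset$ and $V_n\notin \T_v\cup\{o\}$. Since $W_\infty=\bigcup_n W_n$, this gives $v\notin W_\infty$. The base case $n=n_0$ is exactly the event $\mathcal A$. It uses that $V_{n_0}=w_{n_0}\in\T_{w_2}$ with $w_2\neq v$, and that each $W_i$ is connected (Lemma~\ref{lem:lienIetR}), contains $o$, and avoids $v$. Hence $W_{n_0}\cap\T_v=\emptyset$.

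For the inductive step, the geometric observation is the following. If $V_{n-1}\neq o$ and $V_{n-1}\notin\T_v$, then the geodesic from $V_{n-1}$ to any $u\in\T_v$ passes through $o$. Therefore
\[
d(V_{n-1},u)=|V_{n-1}|+|u|\ge |V_{n-1}|+1 .
\]
By construction, the newly explored vertices satisfy $W_n\setminus W_{n-1}\subset B(V_{n-1},R_n)$; this is how $R_n$ is defined in \eqref{eq:defR_n2}. So it suffices to show that $R_n\le |V_{n-1}|$ for every $n>n_0$. Then $W_n\cap\T_v=\emptyset$, and since $V_n\in W_n$, also $V_n\notin \T_v$. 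Moreover $V_n\ne o$, because $|V_n|\ge|V_{n-1}|+R_n/2>0$ on $\mathcal B$. This closes the induction.

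It remains to prove $R_n\le |V_{n-1}|$ for all $n>n_0$, which is the only real (though elementary) step. On $\mathcal A$ we have $|V_{n_0}|=|w_{n_0}|=2^{n_0-2}$, since $|w_2|=1$ and the heights double at each step. On $\mathcal B$, for $n>n_0$, we have $R_n\le 2n/\log K$, and
\[
|V_{n-1}|\ge 2^{n_0-2}+\sum_{k=n_0+1}^{n-1}\frac{R_k}{2}\ge 2^{n_0-2}+\sum_{k=n_0+1}^{n-1}\frac{k}{4\log K}.
\]
So it is enough to check that $f(n)\ge0$ for all $n>n_0$, where
\[
f(n):=2^{n_0-2}+\sum_{k=n_0+1}^{n-1}\frac{k}{4\log K}-\frac{2n}{\log K}.
\]
At $n=n_0+1$ the sum is empty. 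Using $\log K\ge\log 2$ and $n_0\ge7$, we get $2^{n_0-2}\ge 32>16/\log 2\ge 2(n_0+1)/\log K$. Indeed, $2^{m-2}$ versus $2(m+1)/\log2$ holds at $m=7$ and the left side grows faster. For the increments,
\[
f(n+1)-f(n)=\frac{n}{4\log K}-\frac{2}{\log K},
\]
which is nonnegative for $n\ge 8$, and this always holds since $n>n_0\ge7$. Hence $f(n)\ge0$ for all $n>n_0$. This gives $R_n\le|V_{n-1}|$ and completes the proof.
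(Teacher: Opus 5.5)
Your proof is correct and follows essentially the same route as the paper: induction from $n_0$, using $W_n\setminus W_{n-1}\subset B(V_{n-1},R_n)$ together with $R_n<|V_{n-1}|$, which comes from $|V_{n_0}|=2^{n_0-2}$ and the growth conditions in $\mathcal B$. The differences are cosmetic: you track $V_n\notin\T_v\cup\{o\}$ instead of $V_n\in\T_{w_2}$, and you use the monotonicity of $f$ instead of a closed-form sum. One small slip: the chain $32>16/\log 2\ge 2(n_0+1)/\log K$ is false for $n_0>7$, but your next sentence, which compares $2^{m-2}$ with $2(m+1)/\log 2$ for all $m\ge 7$, gives the correct bound.
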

\begin{proof} 
The event $\mathcal{A}$ ensures that $W_{n_0}$ does not contain $v$, that $V_{n_0}\in\T_{w_2}$, and that $|V_{n_0}|=2^{n_0-2}\ge 32$. Assume now that $\mathcal{B}$ also holds and let us prove by induction that $v\notin W_n$ and $V_n\in\T_{w_2}$ for all $n\ge n_0$.
For $i\ge n_0+1$, the event $\mathcal{B}$ gives
$$|V_{i}|\ge |V_{i-1}|+\frac{i}{4 \log K},$$
which yields, for $n\ge n_0$,
$$|V_{n}|\ge  |V_{n_0}|+\sum_{i=n_0+1}^n \frac{i}{4 \log K}\ge 2^{n_0-2}+\big(n(n+1)-n_0(n_0+1)\big)\frac{1}{8\log K}.$$
Thus, using that $R_{n+1}\log K\le 2(n+1)$, we get  
$$|V_n|-R_{n+1}\ge 2^{n_0-2}+\big(n(n+1)-n_0(n_0+1)-16(n+1)\big)\frac{1}{8\log K}.$$
For $n\ge n_0\ge 7$, the last expression is bounded below by
$$2^{n_0-2}-\frac{16(n_0+1)}{8\log K}\ge 2^{n_0-2}-\frac{2(n_0+1)}{\log 2}>0.$$
Since $R_{n+1}=d(V_n,V_{n+1})$ and $W_{n+1}\setminus W_n \subset B(V_n,R_{n+1})$, the inequality $R_{n+1}<|V_n|$ and the induction hypothesis $V_n\in\T_{w_2}$ imply that $B(V_n,R_{n+1})\subset\T_{w_2}$. This implies in particular that $v\notin W_{n+1}\setminus W_n$ and $V_{n+1}\in\T_{w_2}$, and so, by induction, that $v\notin W_\infty$.
\end{proof}

\begin{proof}[Proof of Proposition \ref{prop:keyresult}] Choose $n_0\ge 7$ large enough so that, for every $n>n_0$,
$$(K+1)e^{n/2}\le 2^n\le 3^n\le \frac{1}{K}e^{2n}.$$
It remains to prove that $\P(\mathcal{A}\cap \mathcal{B})>0$, where $\mathcal A$ and $\mathcal B$ are defined in Lemma~\ref{lemm:AcapB} with this choice of $n_0$. Recall the definition of the filtration $\mathcal{F}_n:=\sigma\bigl((I_k)_{k\ge 1}, ((\zi_j^{V_i})_{j\ge 1})_{i< n}\bigr)$ given in \eqref{eq:defFn} and note that $\mathcal{A}\in \mathcal{F}_{n_0}$ and has positive probability. Set $\mathcal{B}_{n_0}:=\Omega$ and, for $N\ge n_0+1$, define
$$\mathcal{B}_N:=\Big\{\forall \; n_0< n\le N,\; \frac{n}{2}\le \log(K)R_n \le 2n \mbox{ and } |V_n|\ge |V_{n-1}|+\frac{R_n}{2}\Big\}.$$
By monotone convergence, $\P(\mathcal{B}\mid \mathcal{F}_{n_0})=\lim_{N\to \infty} \P(\mathcal{B}_N\mid \mathcal{F}_{n_0})$. Moreover, by the choice of $n_0$ and using Lemma \ref{lem:lienIetR}, we see that 
$$2^n\le I_n\le 3^n \;\implies\; (K+1) e^{\frac{n}{2}}\le I_n \le \frac{1}{K}e^{2n} \;\implies\; e^{\frac{n}{2}}\le K^{R_n} \le e^{2n} \;\implies\; \frac{n}{2}\le \log(K)R_n \le 2n.$$
Then, for $N\ge n_0+1$, using that $\mathcal{B}_{N-1}$ and $R_N$ are $\mathcal{F}_{N-1}$-measurable, we write
\begin{eqnarray*}
    \P(\mathcal{B}_N\mid \mathcal{F}_{n_0})&=&\E(1_{\mathcal{B}_{N-1}}1_{\{\frac{N}{2}\le \log(K)R_N \le 2N\}}\E(1_{\{|V_N|\ge |V_{N-1}|+\frac{R_N}{2}\}}\mid\mathcal{F}_{N-1}) \mid \mathcal{F}_{n_0}) \\
    &\ge & \E(1_{\mathcal{B}_{N-1}}1_{\{\frac{N}{2}\le \log(K)R_N \le 2N\}}(1-2K^{-\frac{R_{N}}{4}}) \mid \mathcal{F}_{n_0})\\
     &\ge & \E(1_{\mathcal{B}_{N-1}}1_{\{\frac{N}{2}\le \log(K)R_N \le 2N\}}(1-2K^{-\frac{N}{8\log K}}) \mid \mathcal{F}_{n_0})\\
     &\ge & (1-2K^{-\frac{N}{8\log K}})\E(1_{\mathcal{B}_{N-1}}1_{\{2^N\le I_N \le 3^N\}} \mid \mathcal{F}_{n_0})\\
  &=& (1-2K^{-\frac{N}{8\log K}}) 1_{\{2^N\le I_N \le 3^N\}}\P(\mathcal{B}_{N-1}\mid \mathcal{F}_{n_0}).
\end{eqnarray*}
Hence, we get 
$$\P(\mathcal{B}\mid \mathcal{F}_{n_0})\ge \left(\prod_{n>n_0}(1-2K^{-\frac{n}{8\log K}}) \right)1_{\{\forall n>n_0,\ 2^n\le I_n \le 3^n\} }:=c1_{\{\forall n>n_0,\ 2^n\le I_n \le 3^n\} },$$
for some $c>0$. Finally, we write 
 $$\P(\mathcal{A}\cap \mathcal{B})=\E(1_{\mathcal{A}} \P(\mathcal{B}\mid \mathcal{F}_{n_0}))\ge c \,\E(1_{\mathcal{A}}1_{\{\forall n>n_0,\ 2^n\le I_n \le 3^n\} }).$$
Since the event $\mathcal{A}$ only depends on the value of $I_k$ for $k\le n_0$ and on the indexing $(\zi_j^{V_k})_{j\ge 1}$ for $k<n_0$, using Proposition \ref{prop:Arnold}, we conclude that 
$\P(\mathcal{A}\cap \mathcal{B})>0$, which, in view of Lemma \ref{lemm:AcapB}, implies that $\P(v\notin W_\infty)>0$.
\end{proof}

\begin{proof}[Proof of Theorem \ref{thm:GwithoutSeeds}, Items 1 and 4]
We now prove that $\mathcal{G}$ has infinitely many connected components. Since we need to consider the ancestral paths of several vertices at the same time, we use the construction of $W_n(v)$ given in \eqref{eq:defvraiVn} in Section \ref{sec:constructionW} (and not the one given by \eqref{eq:def2process}). Since $\T$ is transitive, Proposition \ref{prop:keyresult} implies that, for $v \in \T \setminus \{o\}$, if $\overleftarrow{v}$ denotes the father of $v$ for the tree structure $\T$, then
$$c := \P(\overleftarrow{v}\notin W_\infty(v)) >0.$$
Let us define 
$$n_0(v) := \sup\{n\ge 1: \overleftarrow{v}\notin W_n(v)\} \in \N \cup \{\infty\}.$$
We define $W'(v)$ as the set of vertices explored until we explore $\overleftarrow{v}$, that is
\[
W'(v) =
\begin{cases}
W_{n_0(v)}(v)\cup\{\zi_{j}^{V_{n_0(v)}}:\; j\le j_0\} & \text{if } n_0(v)<\infty,\\
W_\infty(v) & \text{if } n_0(v)=\infty.
\end{cases},
\]
where $j_0$ is the unique index where $\zi_{j_0}^{V_{n_0(v)}} = \overleftarrow{v}$.

Let us note that the random sets $W'(v)$ for $v\in S(o,r)$ are independent. Indeed, by construction, $W'(v)$ only depends on the arrival times $\tau_w$ and the random indexing $(\zi_j^w)_{j\ge 1}$ for $w \in \T_v$ and we have $\T_{v}\cap\T_{v'}=\emptyset$ if $v,v'$ are two distinct vertices of $S(o,r)$. Moreover, we have

$$\{\hbox{The set } W'(v) \hbox{ is infinite}\}=\{n_0(v)=\infty\}=\{\overleftarrow{v}\notin W_\infty(v)\}\subset \{\Gamma(v) \mbox{ remains in } \T_v\}.$$ 
In particular, the number $N_r$ of connected components of $\mathcal G$ intersecting the sphere $S(o,r)$ is lower bounded by the number of vertices $v\in S(o,r)$ such that $n_0(v)=\infty$. Since these events are i.i.d.\ for $v\in S(o,r)$ and have positive probability $c$, we get 
$$\liminf_{r\to \infty} \frac{N_r}{\sharp S(o,r)} \geq c \mbox{ a.s.}.$$
This proves Item 1 of Theorem \ref{thm:GwithoutSeeds}. Furthermore, the same argument shows that, for every $v\in \T \setminus \{o\}$, there exist infinitely many vertices $u \in \T_v$ such that $n_0(u)=\infty$. In particular, there exists a $u\in \T_v$ such that $\Gamma(u)$ remains in $\T_u\subset \T_v$ and therefore the asymptotic direction of the genealogical component containing $u$ is in $\partial \T_v$. This proves Item 4 of Theorem \ref{thm:GwithoutSeeds}. 
\end{proof}

\subsection{Proof of Theorem \ref{thm:GwithoutSeeds}, Item 5}

In this section, we show that two distinct connected components cannot share the same asymptotic direction. In particular, for any $u,v\in \T$, either $\Gamma(u)$ and $\Gamma(v)$ ultimately coalesce or their asymptotic distance tends to infinity.
We first prove the following lemma concerning the ancestral process of the root.

\begin{lemma} \label{lemm:VncondHn} Let $(V_n, W_n)_{n\ge 1}$ be the ancestral process of the root. Let $\mathcal{H}_n=\sigma(V_k,W_k, k\le n)$ be the natural filtration of the exploration. There exists a constant $c>0$ such that, for any $n\ge 1$,
$$\P(\forall m\ge 0, V_{n+m}\in \T_{V_n} \mid \mathcal{H}_n)\ge c \mbox{ a.s.}.$$
\end{lemma}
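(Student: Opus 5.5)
We work conditionally on $\mathcal H_n$ and reuse the mechanism of Lemma~\ref{lemm:AcapB}: once the ancestral path sits deep enough inside a subtree and the future jumps grow at the right rate, the balls $B(V_m,R_{m+1})$ never leave that subtree. The key point is that, given $\mathcal H_n$, the future is again driven by the record process. More precisely, conditionally on $\mathcal H_n$, the future gaps $(I_{n+m})_{m\ge 1}$ stochastically dominate a fresh record inter-arrival sequence. This is Proposition~\ref{prop:Arnold}(iii), applied with the Markov description of the record chain; $\mathcal H_n$ only carries information about $I_1,\dots,I_n$ and about the indexings used before step $n$. The future indexings centered at $V_n,V_{n+1},\dots$ are fresh and independent of $\mathcal H_n$.

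\emph{Step 1: stay in $\T_{V_n}$ for a while.} Since $\T_{V_n}\cap W_n=\{V_n\}$ by Lemma~\ref{lem:lienIetR}, the ball around $V_n$ restricted to $\T_{V_n}$ is entirely unexplored. We first force a few initial steps, with probability bounded below uniformly in $\mathcal H_n$. The obstacle is that $I_{n+1}$ may be huge, so that $R_{n+1}$ is large. The new vertex $V_{n+1}$ is uniform on the unexplored vertices of the sphere $S(V_n,R_{n+1})$. At least $K^{R_{n+1}}$ of these lie in $\T_{V_n}$, and at most roughly $(K+1)K^{R_{n+1}-1}$ lie outside it. Hence, with conditional probability at least some $c_1(K)>0$, we get $V_{n+1}\in\T_{V_n}$ at depth $R_{n+1}$ below $V_n$. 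We can even require $V_{n+1}$ to lie in a prescribed child subtree of $V_n$, at probability cost about $1/(K+1)$.

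\emph{Step 2: relative heights.} After this first step we measure heights relative to $V_n$, that is, $h_m:=d(V_n,V_{n+m})$. We then want the analogue of the event $\mathcal B$:
\[
h_{m+1}\ge h_m+R_{n+m+1}/2 \quad\text{and}\quad R_{n+m+1}<h_m \qquad\text{for all } m\ge 1.
\]
On this event, the argument of Lemma~\ref{lemm:AcapB} gives $B(V_{n+m},R_{n+m+1})\subset\T_{V_n}$ by induction. Proposition~\ref{prop:V_nconditionnelle} transposes verbatim to the rooted subtree $\T_{V_n}$. Indeed, as long as the path stays in $\T_{V_n}$, the explored set intersects the subtree hanging below the current vertex only at that vertex. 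So each step fails with conditional probability at most $2K^{-R/4}$.

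\emph{Step 3: controlling the ratio $R_{n+m+1}/h_m$, the main obstacle.} We need two things. First, the first relative height $h_1=R_{n+1}$ must be large compared with the following jumps. Second, the jumps must grow linearly, so that the bounds $2K^{-R/4}$ are summable. Since $I$ is a record gap process started at arbitrary time $n$, we cannot use the absolute bounds $2^k\le I_k\le 3^k$. Instead we would use the relative event
\[
I_{n+m}\in\bigl[c\,2^{m}I_{n+1},\,C\,3^{m}I_{n+1}\bigr]\quad\text{for all } m\ge 2,
\]
together with a preliminary window of $m_0$ steps during which the path descends by doubling depth, as in the event $\mathcal A$. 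The probability of this event conditional on $\mathcal H_n$ must be bounded below uniformly. This is the delicate point. I would handle it with the scale invariance of the record process: conditionally on the past, the next gaps are geometric-like with the current ratio $T_n$, and $I_{n+m}/T_{n+m-1}$ has a law depending only on the fresh uniform ratios. A uniform lower bound then comes from Proposition~\ref{prop:Arnold}(ii),(iii) applied to the rescaled process. Multiplying the bounds from Steps 1–3 gives a constant $c>0$ independent of $n$ and of $\mathcal H_n$.
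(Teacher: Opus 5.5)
\textbf{There is a genuine gap in Step~3.} That step is where the proof has to happen, and it is only asserted. Your Step~2 requires $R_{n+m+1}<h_m$ for every $m\ge 1$. This is an \emph{upper} bound on every future jump relative to the current depth, and it must hold together with a lower bound on the jumps for summability. You need both with probability bounded below uniformly in $n$ and in $\mathcal H_n$, and the tools you cite do not give this:
\begin{itemize}
\item Proposition~\ref{prop:Arnold}(ii) gives, for each fixed prefix, positive probability for the absolute windows $2^k\le I_k\le 3^k$. It gives no bound uniform in $n$ or in the prefix.
\item Proposition~\ref{prop:Arnold}(iii) is a one-sided stochastic lower bound, so it cannot prevent a jump from being too large.
\end{itemize}
Your relative window also does not yield the first constraint $R_{n+2}<h_1=R_{n+1}$. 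It allows $I_{n+2}$ up to $9C\,I_{n+1}$, whereas you need roughly $I_{n+2}<I_{n+1}/(K(K+1))$. Importing the doubling window of $\mathcal A$ literally would not fix this: it forces $K^{R_{n+3}}\approx I_{n+1}^2$, an event whose conditional probability is typically of order $e^{-n}$. A repair through the exact kernel $\P(T_{k+1}>s\mid T_k=t)=t/s$ of the record-time chain may be possible, but that is a substantial argument you have not supplied. As written, no $n$-independent constant $c$ comes out.

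\textbf{The missing idea is that only the path, not the explored set, needs to stay in $\T_{V_n}$.} Track the relative height $h_{V_n}(V_{n+m})$, meaning the height inside $\T_{V_n}$, set to $-1$ outside. Condition on $\mathcal F_{n+m}$ and on $V_{n+m}\in\T_{V_n}$, and write $R=R_{n+m+1}$. Then $V_{n+m+1}$ is uniform on the unexplored part of $S(V_{n+m},R)$, and $\T_{V_{n+m}}\cap W_{n+m}=\{V_{n+m}\}$. Two consequences follow:
\begin{itemize}
\item The relative height increases by exactly $R$ with probability at least $K/(K+1)$.
\item Let $z$ be the vertex $i$ steps above $V_{n+m}$. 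The relative height drops by at least $i$ only if $V_{n+m+1}\in S(z,R-i)\setminus\T_{z\to V_{n+m}}$. This set has at most $K^{R-i}$ elements, so the drop has probability at most $K^{-i}$, \emph{whatever the size of $R$}.
\end{itemize}
Now work on $\{I_{n+m+1}\ge K^{M+1}\ \forall m\ge 0\}$, where all jumps are at least $M$. There the relative height stochastically dominates a random walk with i.i.d.\ steps of law $\mu$, where $\mu(\{M\})=(K-1)/K$ and $\mu((-\infty,-i])=K^{-i}$. For $M$ large this walk has positive drift, so it stays nonnegative forever with positive probability. The only remaining requirement on the gaps is this one-sided event, which is exactly what Proposition~\ref{prop:Arnold}(iii) bounds below uniformly in the past. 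No upper control on the jumps is needed, so your ``main obstacle'' disappears.
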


Note that this result implies that for infinitely many $n$, $(V_{n+m})_{m\ge 0}$ remains in $\T_{V_n}$. In particular, we recover that $\dir \Gamma(o)$ exists a.s. (Item 3 of Theorem \ref{thm:GwithoutSeeds}).

\begin{proof}
For $u\in \T$, define the function $h_u$ on $\T$ by 
$$h_u(w):=\begin{cases}
    d(u,w) &\mbox{if } w\in \T_u\\
    -1 & \mbox{otherwise}
\end{cases}.$$
Hence, for $w\in \T_u$, $h_u(w)$ is the height of $w$ in $\T_u$. As before, we construct the process $(V_n, W_n)_{n\ge 1}$ from the inter-arrival times of a record process $(I_k)_{k\ge 1}$ and random indexings $((\zi_j^{v})_{j\ge 1},v\in \T)$. Recall that
$(\mathcal{F}_n)_{n\ge 1}$ denotes the filtration given by 
\begin{equation*}
    \mathcal{F}_n:=\sigma\bigl((I_k)_{k\ge 1}, ((\zi_j^{V_i})_{j\ge 1})_{i< n}\bigr).
\end{equation*}
Recall also that the distance $R_{k+1}=d(V_{k+1},V_{k})$ and $(V_k, W_{k})$ are $\mathcal{F}_{k}$-measurable, and that, knowing $\mathcal{F}_{k}$, $V_{k+1}$ is uniform on $\{u\in \T\setminus W_{k}:\ d(V_{k},u)=R_{k+1}\}$. Since $W_{k}\cap \T_{V_{k}}=\{V_{k}\}$, we also have
$$
\frac{\sharp \big(S(V_{k},R_{k+1})\cap \T_{V_{k}} \setminus W_k\big)}{\sharp \big(S(V_{k},R_{k+1}) \setminus W_k\big)} \;\geq \;
\frac{\sharp \big(S(V_{k},R_{k+1})\cap \T_{V_{k}}\big)}{\sharp\bigl(S(V_{k},R_{k+1})\bigr)} \;=\; \frac{K}{K+1}.$$
Thus, for $n\geq 1$ and $m \geq 0$, we find that
\begin{eqnarray*}
  \P(h_{V_n}(V_{n+m+1})=h_{V_n}(V_{n+m})+R_{n+m+1} \mid \mathcal{F}_{n+m}) & \geq &
  \P\big(V_{n+m+1} \in \T_{V_{n+m}} \mid \mathcal{F}_{n+m}\big)1_{\{V_{n+m}\in \T_{V_n}\}} \\
  & \geq & \frac{K}{K+1}1_{\{V_{n+m}\in \T_{V_n}\}}.  
\end{eqnarray*}
Let us now show that for $i\ge 1$, on the event $\{V_{n+m}\in \T_{V_n}\}$, we have 
$$\P(h_{V_n}(V_{n+m+1})\le h_{V_n}(V_{n+m})-i \mid \mathcal{F}_{n+m})\le K^{-i}.$$
\begin{figure}
    \centering
\includegraphics[width=5cm]{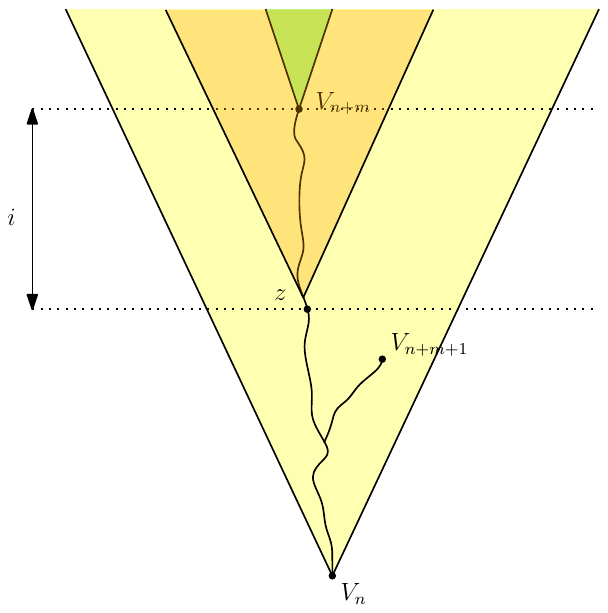}
\caption{\small{Illustration of the notation used in the proof of Lemma \ref{lemm:VncondHn}. In yellow: the tree $\T_{V_n}$. In orange: the tree $\T_{z\rightarrow V_{n+m}}$.}}\label{fig:item5}
\end{figure}
 
We can assume that $h_{V_n}(V_{n+m})-i\ge -1$ since the result is trivially true for larger $i$. If $i\le h_{V_n}(V_{n+m})=d(V_{n+m}, V_n)$, let $z$ be the unique vertex on the path $[V_n,V_{n+m}]$ such that $d(V_{n+m},z)=i$. If $i=h_{V_n}(V_{n+m})+1$, let $z$ be the father of $V_n$ for the tree structure (see Figure \ref{fig:item5} for an illustration of the notation). On the event $h_{V_n}(V_{n+m+1})\le h_{V_n}(V_{n+m})-i$, observe that $z\in [V_{n+m+1},V_{n+m}]$ and so 
$$V_{n+m+1}\in S(z, R_{n+m+1}-i)\setminus \T_{z\rightarrow V_{n+m}},$$
where $\T_{z\rightarrow V_{n+m}}$ denotes the connected component of $\T \setminus \{z\}$ containing $V_{n+m}$. 
Thus, 
$$\P(h_{V_n}(V_{n+m+1})\le h_{V_n}(V_{n+m})-i \mid \mathcal{F}_{n+m})\le \frac{\sharp \big(S(z, R_{n+m+1}-i)\setminus \T_{z\rightarrow V_{n+m}}\big)}{\sharp \big(S(V_{n+m},R_{n+m+1})\cap \T_{V_{n+m}}\big)} = K^{-i}.$$

Now, fix $M\ge 1$ such that the law $\mu$ on $\Z$ defined by 
\begin{equation}\label{eq:defimu}
\mu(\{M\})=\frac{K-1}{K} \qquad \mbox{ and } \qquad \mu((-\infty,-i])=K^{-i} \mbox{ for $i\ge 1$}
\end{equation}
has positive expectation. Then, according to Lemma \ref{lem:lienIetR}, on the event $\{I_{n+m+1}\ge K^{M+1}, m\ge 0\}$, we get $R_{n+m+1}\ge M$ for $m\ge 0$ and so, if $h_{V_n}(V_{n+m})\ge 0$, we get the stochastic domination:
$$\mathcal{L}(h_{V_n}(V_{n+m+1})-h_{V_n}(V_{n+m})\mid\mathcal{F}_{n+m})\ge_{st} \mu.$$
Let $(Y_k)_{k\ge 0}$ be an i.i.d.\ sequence with distribution $\mu$. By induction, we get
$$\P(\forall m\ge 0, h_{V_n}(V_{n+m})\ge 0 \mid \mathcal{F}_n)\ge \P(\forall m\ge 0, \sum_{k=0}^m Y_{k}\ge 0)1_{\{I_{n+m+1}\ge K^{M+1}, m\ge 0\}}.$$
Recalling that $h_{V_n}(V_{n+m})\ge 0$ if and only if $V_{n+m}\in \T_{V_n}$ and $\mathcal{H}_n\subset \mathcal{F}_n$, we get 
$$\P(\forall m\ge 0, V_{n+m}\in \T_{V_n} \mid \mathcal{H}_n)\ge \P(\forall m\ge 0, \sum_{k=0}^m Y_{k}\ge 0)\P(I_{n+m+1}\ge K^{M+1}, m\ge 0\mid\mathcal{H}_n).$$
The first probability on the right-hand side is positive since $(Y_k)_{k\ge 0}$ are i.i.d.\ random variables with positive expectation. For the second probability, we have
\begin{eqnarray*}
\P(I_{n+m+1}\ge K^{M+1}, m\ge 0\mid\mathcal{H}_n) &=& \P(I_{n+m+1}\ge K^{M+1}, m\ge 0\mid I_1,\ldots,I_n)\\
&\geq & \P(I_{m+2}\ge K^{M+1},\ m\ge0)>0,
\end{eqnarray*}
where we used Proposition~\ref{prop:Arnold}, Item (iii) for the inequality.
\end{proof}

In order to prove Item 5 of Theorem \ref{thm:GwithoutSeeds}, which states that two distinct connected components of $\mathcal{G}$ cannot share the same asymptotic direction, we need to consider the ancestral lines of two distinct vertices at the same time. To this end, we introduce a coupled exploration process $(U_n,V_n,W_n)_{n\ge 1}$ that simultaneously constructs the ancestral lines $\Gamma(u)$ and $\Gamma(v)$ of two vertices $u,v$ in a specified order extending the construction of the genealogical process of Section \ref{sec:constructionW}. This coupled exploration process can, once again, be encoded in terms of a classical record process.

\medskip
As in Section \ref{sec:constructionW}, we proceed by induction. Set $(U_1,V_1):=(u,v)$ and $W_1:=\{u, v\}$. Assume that $U_n$, $V_n$ and $W_n$ have been constructed. Then, there are three cases: if $\tau_{V_n}>\tau_{U_n}$, we consider the increasing uniform indexing $(\zi_j^{V_n})_{j\ge 1}$ associated to $V_n$ and let 
$$J_n := \inf\{j \ge 1 : \tau_{\zi_j^{V_n}} < \tau_{V_n}\}
     = \inf\{j \ge 1 : \zi_j^{V_n} \notin W_n\setminus\{U_n\},\ \tau_{\zi_j^{V_n}} < \tau_{V_n}\}.$$
We then declare $\zi_{J_n}^{V_n}$ to be the direct ancestor of $V_n$, and we set
$$
V_{n+1} := \zi_{J_n}^{V_n}, \qquad U_{n+1} := U_n,
\qquad
W_{n+1} := W_n \cup \{\zi_j^{V_n} : j \le J_n\}.
$$
It may happen that $V_{n+1}=U_n=U_{n+1}$, which means that the ancestral lines of $v$ and $u$ have just coalesced. In the second case $\tau_{V_n}<\tau_{U_n}$, we perform a symmetric construction interchanging the roles of $V_n$ and $U_n$. Finally, if at some time step $n$ we have $\tau_{V_n}=\tau_{U_n}$, then $V_n=U_n$; the paths have previously merged, and we simply continue with the single exploration process described in the previous section. To summarize this in words: the joint ancestral process of $u$ and $v$ is constructed in the same manner as in Section \ref{sec:constructionW}, and, at each step, we choose to explore the ancestral line of the vertex with the currently largest arrival time. In particular, keeping $V_1$ and then the indices $k\ge 2$ such that $V_k \neq V_{k-1}$ recovers the ancestral line $\Gamma(v)$ of $v$ (and a similar statement holds for $\Gamma(u)$).

\bigskip

Just as for the single genealogical exploration process of Section \ref{sec:constructionW}, it is convenient to encode the joint exploration process $(U_n,V_n,W_n)_{n\ge 1}$ in terms of a record process. But now we need to consider the inter-arrival times $(I'_n,\, n\geq 1)$ of a \emph{lower 2-record process}\footnote{
This means that a new record is reached when we observe a value that is smaller than the second smallest value observed so far. More precisely, let \((X_i)_{i\ge 1}\) be i.i.d.\ with an atomless distribution, set \(T_0:=0\), \(T_1:=2\), and, for \(n\ge 1\), \(T_{n+1}:=\inf\{k> T_n:X_k<\min_2\{X_1,\ldots,X_{T_n}\}\}\), where \(\min_2\) denotes the second smallest element. Then $(T_n)$ is the sequence of record times of the lower 2-record process, and the differences \(I'_n:= T_n- T_{n-1}\) are the inter-arrival times. This corresponds to the \(k=2\) case of the general \(k\)-record process; see \cite{DziubdzielaKopocinski1976,arnold2011}.} instead of a simple record process. We also need a sequence $(\beta_i)_{i\ge 1}$ of i.i.d.\ uniformly distributed random variables on $\{u,v\}$ and a family $((\zi_j^w)_{j \ge 1}, w \in \T)$ of independent uniform increasing indexings. We assume that all these families of random variables are independent.
We define by induction
$$
(\hat{U}_1,\hat{V}_1):=(u,v),
\qquad
\hat{W}_1:=\{u,v\}.
$$
Assume that $(\hat{U}_n,\hat{V}_n, \hat{W}_n)$ has been constructed and that $\hat{U}_n \neq \hat{V}_n$. If $\beta_n=u$, define $\hat{J}_n$ as the first index such that we explore $I'_{n+1}$ new vertices around $\hat{U}_n$ that do not belong to $\hat{W}_n$:
\begin{equation}\label{eq:def1process2record}
\hat{J}_n := \inf\left\{j \ge 1 : \sharp\{i \le j : \zi_i^{\hat{U}_n} \notin \hat{W}_n\}=I'_{n+1}\right\}.
\end{equation}
Let $\hat{L}_n$ denote the index of $\hat{V}_n$ in the increasing indexing around $\hat{U}_n$, \emph{i.e.}, such that $\zi_{\hat{L}_n}^{\hat{U}_n}=\hat{V}_n$. 
\begin{itemize}
    \item If $\hat{J}_n<\hat{L}_n$, then set $\hat{U}_{n+1}=\zi_{\hat{J}_n}^{\hat{U}_{n}}$, $\hat{V}_{n+1}=\hat{V}_n$ and $\hat{W}_{n+1}=\hat{W}_n\cup \{\zi_j^{\hat{U}_{n}}:j\le \hat{J}_n\}$.
    \item Otherwise, set $\hat{U}_{n+1}=\hat{V}_{n+1}=\hat{V}_{n}$ and $\hat{W}_{n+1}=\hat{W}_n\cup \{\zi_j^{\hat{U}_{n}}:j\le \hat{L}_n\}$.
\end{itemize}
If $\beta_n=v$, the construction is the same with the roles of $\hat{U}$ and $\hat{V}$ interchanged.
This constructs the process $(\hat{U}_n,\hat{V}_n,\hat{W}_n)$ up to the first step $n$ (possibly infinite) such that $\hat{U}_n=\hat{V}_n$.

\begin{lemma}\label{lem:memeloi} Consider the two processes defined above. Let $\sigma:=\inf\{n\ge 1, U_n=V_n\}$ and $\hat{\sigma}:=\inf\{n\ge 1, \hat{U}_n=\hat{V}_n\}$. Then $(U_n,V_n,W_n)_{n\le \sigma}$ and $(\hat{U}_n,\hat{V}_n,\hat{W}_n)_{n\le \hat{\sigma}}$ have the same distribution. Moreover, on the event that $\hat{\sigma} > n$, we have
\[
I'_n = \sharp\bigl(\hat{W}_n \setminus \hat{W}_{n-1}\bigr),
\qquad \text{with the convention } \hat{W}_0 := \emptyset.
\]
\end{lemma}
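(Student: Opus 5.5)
We will argue by induction on $n$, exactly as in the proof of Lemma~\ref{lemm:const_record}, using the Markov structure of lower $2$-record processes. The key invariant for the true joint exploration is this: for $n<\sigma$, every vertex of $W_n\setminus\{U_n,V_n\}$ has arrival time larger than $\max(\tau_{U_n},\tau_{V_n})$. Consequently $\tau_{U_n}$ and $\tau_{V_n}$ are the two smallest arrival times among the revealed ones. Moreover, conditionally on the information revealed so far (the set $W_n$, the positions $U_n,V_n$ and the relative order of the revealed times), the unrevealed times are i.i.d.\ and independent of everything else. We will check this invariant inductively from the definition. At each step we explore around the vertex with the larger of the two times, and we stop at the first unrevealed vertex whose time is below that larger time, i.e.\ below the current second minimum. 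The only exception is when we reach the other current vertex first, which produces coalescence.

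The identification step goes as follows. Conditionally on the past, the two times $\tau_{U_n},\tau_{V_n}$ are exchangeable given the revealed order information up to which one is larger. So the identity of the vertex with the larger time is a fair coin, independent of the geometry; this is the variable $\beta_n$. We also need independence from the past coins. For that we observe that after a new record (a value below the second minimum), the new pair of two smallest values consists of the old minimum and the new value, and their relative order is uniform and independent of everything before. This is a standard property of $2$-records: the new observation is uniform over the rank positions $\{1,2\}$.

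The number of fresh unrevealed vertices inspected before finding one below the second minimum has, conditionally, the law of the next inter-arrival time of a lower $2$-record process started from the current configuration. By the Markov property of $2$-record processes, this law depends only on the current index $T_n$. We therefore need to check that the counts match the record times $T_n$, i.e.\ that $\sharp W_n=T_n$. This holds by induction: $\sharp W_1=2=T_1$, and each step adds exactly $I'_{n+1}$ new vertices as long as no coalescence occurs. If, during the scan in the increasing indexing around the current vertex, the other current vertex (already in $W_n$, hence skipped in the count) is met at index $\hat L_n$ before the $I'_{n+1}$-th fresh vertex, then its time is smaller than the current time, so it is the parent. This gives exactly the coalescence rule defining $\hat\sigma$. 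The ties are broken by the same uniform indexings in both constructions. Together these give equality in law of $(U_n,V_n,W_n)_{n\le\sigma}$ and $(\hat U_n,\hat V_n,\hat W_n)_{n\le\hat\sigma}$. The identity $I'_n=\sharp(\hat W_n\setminus\hat W_{n-1})$ on $\{\hat\sigma>n\}$ is immediate from the definition, because in the non-coalescing branch exactly $I'_{n}$ fresh vertices are added, and $\hat W_1$ has two elements, matching $T_1=2$.

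The main obstacle is the independence claim for $\beta_n$. We have to verify carefully that, conditionally on the full past of the exploration, including the inter-arrival values already used, the ordering of the two current minimal times is a fresh fair coin independent of future record gaps. We would prove this by conditioning on the set of revealed values and using exchangeability of the new record value with the old minimum within the interval below the old second minimum. The future gaps depend only on the second minimum, which is symmetric in the two labels.
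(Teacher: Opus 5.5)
Your proposal is correct and follows essentially the same route as the paper. Both arguments rest on the same ingredients. The two active endpoints carry the two smallest revealed arrival times. Each exploration step, before coalescence, is a lower 2-record inter-arrival time. The coin $\beta_n$ comes from the exchangeability of the old minimum and the new value below the current second minimum. Coalescence occurs when the scan meets the other endpoint first. Your bookkeeping $\sharp W_n=T_n$, combined with the Markov property in $T_n$, is only a minor variant of the paper's conditioning on the second record value.
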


\begin{proof}
We argue by induction before the coalescence time. On the event $\{\sigma>n\}$, the variables $\tau_{U_n}$ and $\tau_{V_n}$ are the two smallest elements of $\{\tau_w:\, w\in W_n\}$. The next step explores the endpoint with the larger of these two arrival times, and new vertices are revealed until either the other endpoint is met or a new vertex has arrival time smaller than $\max(\tau_{U_n},\tau_{V_n})$. Thus, if one ignores the possible coalescence with the other endpoint, the number of new vertices that have to be inspected is exactly an inter-arrival time of a lower 2-record process.

It remains to identify the random choice of which endpoint is explored at each step. For a lower 2-record process, conditionally on the current second record value, say $b$, the current minimum and the new observation which falls below $b$ are independent and have the same law: they are distributed as the arrival-time law conditioned to be smaller than $b$. Since this law is atomless, the new observation is smaller than the current minimum with probability $1/2$, and larger with probability $1/2$. Thus, after each 2-record, the line carrying the larger of the two active arrival times is equally likely to be the $u$-line or the $v$-line. The same conditional independence also gives that these choices are i.i.d.\ and independent of the inter-arrival times and of the uniform increasing indexings. The recursive rules in the construction of $(\hat{U}_n,\hat{V}_n,\hat{W}_n)$ reproduce exactly this mechanism, with the variables $(I'_n)$ giving the 2-record inter-arrival times and the variables $(\beta_n)$ choosing which endpoint is currently the larger one. If the other endpoint is encountered before the next 2-record, both constructions coalesce. This proves the equality in distribution up to the coalescence time. Finally, on $\{\hat{\sigma}>n\}$ no coalescence has occurred before time $n$, and the $n$-th increment of the explored set is precisely the $n$-th 2-record inter-arrival time; in particular $I'_1=2=\sharp\hat{W}_1$ and, for $n\ge2$, $I'_n=\sharp(\hat{W}_n\setminus\hat{W}_{n-1})$.
\end{proof}

\begin{proof}[Proof of Theorem \ref{thm:GwithoutSeeds}, Item 5]
One easily checks that, for any $n \geq 1$, $\hat{W}_n$ either has two connected components in $\T$, one containing $u$ and the other containing $v$, or consists of a unique connected component containing both $u$ and $v$.

For $w\in \T$, define $\T_{w \setminus \{u,v\}}$ to be the subtree of $\T$ composed of $\{w\}$ and all the connected components of $\T\setminus\{w\}$ that contain neither $u$ nor $v$. Observe that $\T_{w \setminus \{u,v\}}$ is a regular $(K+1)$-ary tree except at $w$, whose degree is either $K-1$ or $K$ depending on whether $u$ or $v$ are in the same component of $\T\setminus\{w\}$ or not (see Figure \ref{fig:2direction} for an illustration).

\begin{figure}
    \centering
\IfFileExists{2direction.pdf}{%
\includegraphics[width=11cm]{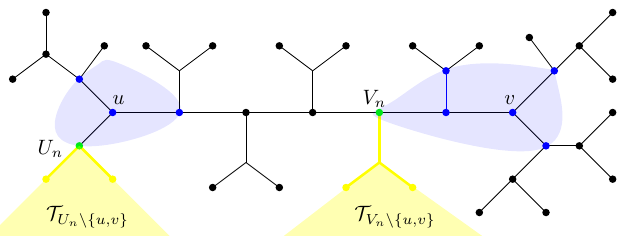}%
}{%
\fbox{\parbox[c][3cm][c]{0.75\linewidth}{\centering\ttfamily Missing figure: 2direction.pdf}}%
}
\caption{\small{Illustration of the double exploration process on a binary tree. The set $\hat W_n$ is represented in blue. The subtrees $\T_{\hat U_n \setminus \{u,v\}}$ and $\T_{\hat V_n \setminus \{u,v\}}$ are in yellow.}}\label{fig:2direction}
\end{figure}

Since each step of the double exploration is performed from the active endpoint with the larger arrival time, the same argument as in the simple exploration process shows that
$$
\hat{W}_n \cap \T_{\hat{U}_n \setminus \{u,v\}} = \{\hat{U}_n\}
\qquad \text{and} \qquad
\hat{W}_n \cap \T_{\hat{V}_n \setminus \{u,v\}} = \{\hat{V}_n\},
$$
and that the sets $\T_{\hat{U}_n \setminus \{u,v\}}$ and $\T_{\hat{V}_n \setminus \{u,v\}}$ are disjoint 
as soon as $\hat{U}_n\neq \hat{V}_n$. Define the filtrations
\begin{eqnarray*}
  \mathcal{F}'_n&:=&\sigma\bigl((I'_k)_{k\ge 1}, (\beta_k)_{k\ge 1}, ((\zi_j^{\hat{U}_i})_{j\ge 1})_{i< n}, ((\zi_j^{\hat{V}_i})_{j\ge 1})_{i< n}\bigr)\\
\mathcal{H}'_n&:=&\sigma((\hat{U}_k,\hat{V}_k,\hat{W}_k), k\le n),
\end{eqnarray*}
and the event $A_n$ that, starting from time $n$, the two explorations remain in their respective sectors:
\[
A_n:=\left\{\forall m\ge 0,\ 
\hat{U}_{n+m}\in \T_{\hat{U}_n \setminus \{u,v\}},\ 
\hat{V}_{n+m}\in \T_{\hat{V}_n \setminus \{u,v\}}\right\}.
\]
Fix $M\ge1$ such that the law $\mu$ defined in \eqref{eq:defimu} has positive expectation. Using an argument similar to the proof of Lemma \ref{lemm:VncondHn}, we now find that
$$\P(\hat{\sigma}<\infty \ \text{or}\ A_n \mid \mathcal{F}'_n) \;\ge \; \P\Big(\forall m\ge 1, \sum_{k=0}^m Y'_{k} > 0\Big)^2 \; 1_{\{I'_{n+m+1} \ge K^{M+1}, m\ge 0\}},$$
where $(Y'_{k})_{k\ge 0}$ are independent random variables\footnote{$Y'_0$ has this particular distribution because the vertex $\hat{U}_n$ may have degree $K-1$ in the tree $\T_{\hat{U}_n \setminus \{u,v\}}$.} such that
$$\P(Y'_0=M)=\frac{K-1}{K+1} \qquad \mbox{ and } \qquad \P(Y'_0=-1)=\frac{2}{K+1},$$
and, for $k\ge 1$, $Y'_{k}$ has law $\mu$ defined by \eqref{eq:defimu}. We have $\P(\forall m\ge0,\sum_{k=0}^mY'_k\ge0) >0$ because $\P(Y'_0=M)>0$ and the subsequent random walk has increment law $\mu$ with positive drift.
The proof of Proposition~\ref{prop:Arnold} adapts directly to the lower 2-record process and gives
$$\P(I'_{n+m+1} \ge K^{M+1}, m\ge 0\mid \mathcal{H}'_n) \;\ge\; \P( I'_{m+2} \ge K^{M+1}, m\ge 0) \;>\; 0.$$
Thus, we deduce the existence of $c'>0$ such that, for any $n\ge 1$, a.s.
$$\P(\hat{\sigma}<\infty \ \text{or}\ A_n\mid \mathcal{H}'_n)\ge c'.$$
Finally, a standard restart argument at the successive exit times from the current sectors gives that
$$\P(\hat{\sigma}<\infty \ \text{or}\ A_n \hbox{ for some $n$})= 1.$$
By Lemma \ref{lem:memeloi}, the same dichotomy holds for the original coupled exploration process. So either $\sigma < \infty$ and the two ancestral paths $\Gamma(u)$ and $\Gamma(v)$ eventually coalesce, or they eventually remain in disjoint subtrees, and therefore $\dir \Gamma(u)\neq \dir \Gamma(v)$.
\end{proof}

\subsection{Proof of Theorem \ref{thm:GwithoutSeeds}, Item 2 }

Let $u\in \T$. Recall that $p(u)$ denotes the direct ancestor of $u$ in the genealogical graph (\emph{i.e.}, the vertex from which $u$ inherits its color) and that $D(u)$ stands for the set of descendants of $u$ (\emph{i.e.}, the set of vertices whose color inheritance can be traced back to $u$). Rigorously, this means that, given $v\in D(u)$, there exists a finite sequence of vertices $(x_0,x_1,\ldots, x_n)$ such that 
    \begin{itemize}
        \item[(i)] $x_0=u$ and $x_n=v$;
        \item[(ii)] for $0\le i \le n-1$, $p(x_{i+1})=x_i$.
    \end{itemize}
Points (i) and (ii) imply in particular that
\begin{itemize}
    \item[(a)] $\tau_u<\tau_{x_1}<\ldots<\tau_{x_{n-1}}<\tau_v$;
        \item[(b)] for $0\le i \le n-1$, the ball $B(x_{i+1},d(x_i,x_{i+1})-1)$ contains no vertices with arrival time smaller than $\tau_{x_{i+1}} > \tau_u$. 
    \end{itemize}
In this section, we prove that $D(u)$ is a.s. finite. To do so, we upper bound the probability that points (a) and (b) occur. The difficulty is that the balls in point (b) may overlap heavily. We therefore extract below a subsequence $(x_{i_1},\ldots,x_{i_m})$ and integers $r_1,\ldots,r_m$ such that the balls $B(x_{i_j},r_j-1)$ are pairwise disjoint, contain no colored vertex at time $\tau_u$, and still control the total displacement from $u$ to $v$. 

To make this idea precise, we use the following notation: for $u,v\in \T$, $m\ge 1$ and positive integers $r_1,\ldots,r_m$, we define $A(u,v,m,r_1,\ldots,r_m)$ to be the set of sequences of vertices $(\bar{x}_1,\ldots,\bar{x}_m)$ for which one can find vertices $(v_1,\ldots, v_{m-1})$ on the path $[u,v]$ such that, setting $v_0:=u$, the vertices $v_0,v_1,\ldots,v_{m-1}$ are ordered by strictly increasing distance from $u$ and we have
    \begin{itemize}
        \item $d(\bar{x}_i,v_i)=d(\bar{x}_i,v_{i-1})=r_i$ for $1\le i\le m-1$, 
        \item $d(\bar{x}_m,v_{m-1})=r_m$ and $d(\bar{x}_m,v)\le r_m$.
    \end{itemize}
See Figure \ref{fig:chemin} for an illustration.
\begin{figure}
    \centering
\includegraphics[width=11cm]{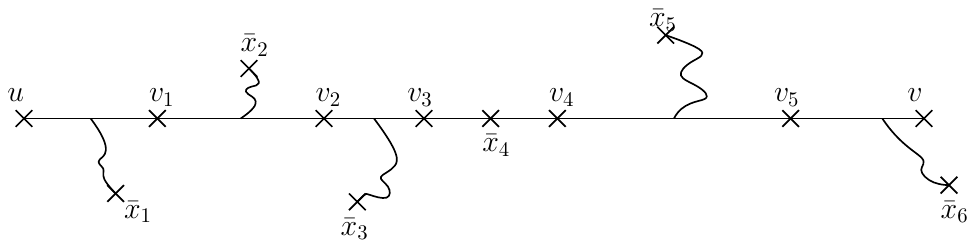}
\caption{\small{Illustration of a sequence $(\bar{x}_1,\ldots,\bar{x}_6)\in A(u,v,6,r_1,\ldots,r_6)$.}}\label{fig:chemin}
\end{figure}

\begin{lemma}\label{lemm:descendance1} Let $v\in D(u)\setminus\{u\}$ and let $(x_0:=u,x_1,\dots, x_n:=v)$ be such that $p(x_{i+1})=x_i$ for every $0\le i\le n-1$. Then there exist $m\ge 1$, integers $r_1,\ldots,r_m\ge 1$, and indices $1\le i_1<\ldots <i_m\le n$ such that 
\begin{itemize}
    \item $(x_{i_1},\ldots, x_{i_m})\in A(u,v,m,r_1,\ldots,r_m)$.
    \item At time $\tau_u$, the union of balls $\bigcup_{j=1}^m B(x_{i_j},r_j-1)$ does not contain a colored vertex.
\end{itemize}

\end{lemma}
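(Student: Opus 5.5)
The plan is to build the subsequence greedily along the geodesic $[u,v]$, using the balls that the parent relation certifies to be free of early arrivals. For $1\le i\le n$ set $\rho_i:=d(x_i,x_{i-1})$. Because $p(x_i)=x_{i-1}$, property (b) says that the open ball $B(x_i,\rho_i-1)$ contains no vertex with arrival time smaller than $\tau_{x_i}$. By (a) we have $\tau_{x_i}>\tau_u$, so this ball contains no vertex coloured at time $\tau_u$. In particular it contains none of $u=x_0,x_1,\dots,x_{i-1}$.

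\textbf{Covering.} I will first show that the closed balls $\bar B_i:=B(x_i,\rho_i)$, $1\le i\le n$, cover $[u,v]$. Each $\bar B_i$ is a connected subtree of $\T$ containing $x_{i-1}$ and $x_i$. Hence consecutive balls intersect and $\bigcup_{i\ge k}\bar B_i$ is connected for every $k$. Since $u=x_0\in\bar B_1$ and $v=x_n\in\bar B_n$, the union contains the unique geodesic $[u,v]$. The same argument shows that, for each $k$, the chain $\bar B_k,\dots,\bar B_n$ covers the geodesic from any point of $\bar B_k$ to $v$.

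\textbf{Greedy step.} Set $v_0:=u$ and $i_0:=0$. Given $v_{j-1}\in[u,v]$ and $i_{j-1}$, let $w$ be the successor of $v_{j-1}$ on $[v_{j-1},v]$. Define $i_j$ as the largest index $i>i_{j-1}$ with $w\in\bar B_i$; such an index should exist by the chain-covering remark above. Put $x:=x_{i_j}$ and $r_j:=d(x,v_{j-1})$.

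\begin{itemize}
\item If $v\in B(x,r_j)$, stop with $m:=j$.
\item Otherwise let $v_j$ be the other endpoint of the subpath $[v_{j-1},v]\cap B(x,r_j)$.
\end{itemize}

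The distance from $x$ along a geodesic of a tree is V-shaped with slopes $\pm1$. Since $d(x,w)\le d(x,v_{j-1})$ (I must check $w$ lies on the decreasing side), $v_j$ is strictly further from $u$ than $v_{j-1}$ and satisfies $d(x,v_j)=r_j=d(x,v_{j-1})$. This is exactly the defining condition of $A(u,v,m,r_1,\dots,r_m)$.

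It remains to check two facts. First, $r_j\ge1$. Second, $r_j\le\rho_{i_j}$, which gives $B(x_{i_j},r_j-1)\subset B(x_{i_j},\rho_{i_j}-1)$; that ball contains no coloured vertex at time $\tau_u$ by the first paragraph. The union statement then follows immediately. Termination holds because the $v_j$ strictly progress along the finite path $[u,v]$.

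\textbf{Main obstacle.} The delicate step is making the choices of $i_j$ compatible with three requirements simultaneously:
\begin{itemize}
\item the indices are strictly increasing;
\item $v_{j-1}$ lies on the boundary sphere (not strictly inside) of the chosen ball, so that $r_j=d(x_{i_j},v_{j-1})\le\rho_{i_j}$ still gives progress;
\item the next point $w$ is on the ``descending'' side of $x_{i_j}$.
\end{itemize}
If the ball with maximal index containing $w$ has $v_{j-1}$ in its interior, I would shrink the radius to $d(x_{i_j},v_{j-1})$. The V-shape then still yields progress, provided $v_{j-1}$ precedes the projection of $x_{i_j}$ onto $[u,v]$. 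When this fails, I would instead move $v_{j-1}$ forward to the exit point of the previous ball, which lies on the sphere of the previous ball by construction. Proving that the maximal-index choice never forces a backtrack in the indices is the step I expect to need a careful case analysis. The key input there should be that $x_k\notin B(x_i,\rho_i-1)$ for all $k<i$.
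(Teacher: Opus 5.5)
There is a genuine gap, and it sits exactly at the step you postpone. The inequality $r_j=d(x_{i_j},v_{j-1})\le\rho_{i_j}$ is never proved, and with your selection rule (largest $i>i_{j-1}$ with $w\in B(x_i,\rho_i)$) it is false in general. Let $u=a_0,a_1,\dots,a_{10}=v$ be a geodesic and consider the chain $x_0=u$, $x_1=a_1$, $x_2=v$. This chain occurs with positive probability, for instance when
\begin{itemize}
\item $\tau_u<\tau_{a_1}<\tau_v$;
\item every vertex of $B(v,8)\setminus\{v\}$ and every neighbor of $a_1$ other than $u$ arrives after $\tau_v$;
\item the tie at distance $9$ from $v$ is broken in favor of $a_1$.
\end{itemize}
Then $\rho_1=1$, $\rho_2=9$ and $w=a_1\in B(v,9)$. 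Your rule therefore gives $i_1=2$ and $r_1=d(v,u)=10>\rho_2$, and stops with $m=1$. But $p(v)=a_1$ only certifies that $B(v,8)$ is free of earlier arrivals. A neighbor $y$ of $a_2$ off the geodesic has $d(v,y)=9$, and $\tau_y<\tau_u$ is compatible with both $p(a_1)=u$ and $p(v)=a_1$. So $B(v,r_1-1)$ may contain a vertex colored at time $\tau_u$, and the second bullet fails.

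The property $x_k\notin B(x_i,\rho_i-1)$ for $k<i$, which you propose as the key input, holds in this example and does not rescue the rule. Your fallback of moving $v_{j-1}$ forward is also unavailable at $j=1$, where $v_0=u$ is forced. The correct output here is $i_1=1$, $r_1=1$, $v_1=a_2$, then $i_2=2$, $r_2=8$.

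The missing idea is to select by first entrance into a branch, not by last covering ball. Let $i_j$ be the first index $i>i_{j-1}$ such that $x_i$ lies in $\T_{v_{j-1}\to v}$, the component of $\T\setminus\{v_{j-1}\}$ containing $v$. It exists because $x_n=v$ lies there and $x_{i_{j-1}}\neq v$. Then $x_{i_j-1}\notin\T_{v_{j-1}\to v}$, for one of three reasons:
\begin{itemize}
\item by minimality of $i_j$;
\item because $x_0=u=v_0$, when $i_j=1$;
\item because $v_{j-1}$ was chosen strictly beyond the projection of $x_{i_{j-1}}$ onto $[u,v]$, when $i_j-1=i_{j-1}\ge1$.
\end{itemize}
Hence the geodesic from $x_{i_j}$ to its parent passes through $v_{j-1}$, which gives $r_j\le\rho_{i_j}$ at once. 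At the same time, $x_{i_j}\in\T_{v_{j-1}\to v}$ gives $r_j\ge1$. It also puts the projection of $x_{i_j}$ onto $[v_{j-1},v]$ strictly after $v_{j-1}$, which is exactly your ``decreasing side'' requirement. This produces $v_j$ with $d(x_{i_j},v_j)=r_j$ whenever $d(x_{i_j},v)>r_j$, and strict monotonicity of the indices is built in. This is the paper's argument, and with it your covering step becomes unnecessary.
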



\begin{proof} 
We construct inductively, for $j\ge 1$, a subsequence $(x_{i_1},\ldots, x_{i_j})$, a sequence of positive integers $(r_1,\ldots,r_j)$ and a sequence of vertices $(v_1,\ldots, v_{j-1})$ such that
\begin{itemize}
\item[(i)] The vertices $v_1,\ldots,v_{j-1}$ lie on the path $]u,v[$ and are ordered by increasing distance from $u$.
 \item[(ii)] The vertices $v_1,\ldots,v_{j-1}$ witness that $(x_{i_1},\ldots, x_{i_j})\in A(u,x_{i_j},j,r_1,\ldots,r_j)$.
    \item[(iii)] At time $\tau_u$, the union of balls $\bigcup_{h=1}^j B(x_{i_h},r_h-1)$ contains no colored vertex.
\end{itemize}

Let us start with $j=1$. Removing the $K+1$ edges around $u$, we obtain $K+1$ disjoint subtrees of $\T$. We denote by $\T_{u\to v}$ the one containing the vertex $v$. Observe that $x_1\in \T_{u\to v}$. Indeed, set
$$I:=\inf\{i\ge 1, x_i\in \T_{u\to v}\},$$
which is well defined since $x_n=v\in \T_{u\to v}$ and hence $I\le n$. Moreover, since $x_{I}$ inherited its color from $x_{I-1}$, this implies that $d(x_I,x_{I-1})\le d(x_I,u)$. So, either $x_{I-1}=u$, \emph{i.e.}, $x_1\in \T_{u\to v}$, or $x_{I-1}\in \T_{u\to v}$, but this would contradict the definition of $I$. Therefore $I=1$. We set
$$i_1=1 \qquad r_1:=d(u,x_{1}).$$
 Of course, we have $$(x_{i_1})\in A(u,x_{i_1},1,r_1).$$
Since $x_{1}$ inherits its color from $u=x_0$, it implies that the ball $B(x_{i_1},r_1-1)$ contains no colored vertices at time $\tau_u$. If $d(x_1,v)\le r_1$, we set $m=1$ and the sequence $(x_1)$ satisfies the condition of Lemma \ref{lemm:descendance1}.

\medskip

Assume now that $(x_{i_1},\ldots, x_{i_j})$, a sequence of positive integers $(r_1,\ldots,r_j)$ and a sequence of vertices $(v_1,\ldots, v_{j-1})$ have been constructed such that (i), (ii), (iii) hold. 
If $d(v,x_{i_j})\le r_j$, then we set $m=j$ and we have as required
\begin{itemize}
    \item $(x_{i_1},\ldots, x_{i_m})\in A(u,v,m,r_1,\ldots,r_m)$.
    \item At time $\tau_u$, the union of balls $\bigcup_{h=1}^m B(x_{i_h},r_h-1)$ does not contain a colored vertex.
\end{itemize}
Otherwise, $d(v,x_{i_j})> r_j$ and, with the convention $v_0:=u$, we define $v_j$ as the unique vertex on $]v_{j-1},v[$ such that $d(x_{i_j},v_j)=r_j$. Removing the $K+1$ edges around $v_j$, we obtain $K+1$ disjoint subtrees of $\T$ and we denote by $\T_{v_j\to v}$ the one containing the vertex $v$. 
Let 
$$i_{j+1}:=\inf\{i> i_j, x_i\in \T_{v_j\to v}\} \qquad r_{j+1}:=d(v_j,x_{i_{j+1}}).$$
As before, note that $x_n=v\in \T_{v_j\to v}$ so $i_{j+1}$ is well defined and $i_{j+1}\le n$. Moreover, note that 
$$d(x_{i_{j+1}},x_{i_{j+1}-1}) \ge d(x_{i_{j+1}},v_j)=r_{j+1}.$$
Since $x_{i_{j+1}}$ inherited its color from $x_{i_{j+1}-1}$, this implies that the ball $B(x_{i_{j+1}},r_{j+1}-1)$ contains no colored vertex at time $\tau_u$. Moreover, by construction
$(x_{i_1},\ldots, x_{i_{j+1}})\in A(u,x_{i_{j+1}},j+1,r_1,\ldots,r_{j+1})$.
Together with the induction hypothesis, this gives (i)--(iii) at rank $j+1$. Since the indices \(i_j\) are strictly increasing and are bounded by \(n\), the induction stops after finitely many steps.
\end{proof}

\begin{proof}[Proof of Theorem \ref{thm:GwithoutSeeds}, Item 2]
Fix $u,v\in \T$ with $u\neq v$ and let $\ell:=d(u,v)$. To lighten the notation, we use $\bar{\mathbf{x}}_m$ to denote a sequence of vertices $(\bar{x}_1,\ldots,\bar{x}_m)$ and $\mathbf{r}_m$ to denote a sequence of positive integers $(r_1,\ldots,r_m)$. Let us also define the event
$$\mathcal{B}(\bar{\mathbf{x}}_m,\mathbf{r}_m):=\{\hbox{At time $\tau_u$, $\bigcup_{j=1}^m B(\bar{x}_{j},r_j-1)\setminus\{\bar{x}_1,\ldots,\bar{x}_m\}$ does not contain a colored vertex}\}.$$
Using the previous lemma, we know that if $v\in D(u)$, there exist $m\ge 1$, $\mathbf{r}_m\ge 1$, $\bar{\mathbf{x}}_{m}\in A(u,v,m,\mathbf{r}_m)$ such that 
\begin{itemize}
    \item $\tau_{\bar{x}_1}<\ldots<\tau_{\bar{x}_m}$;
    \item $\mathcal{B}(\bar{\mathbf{x}}_m,\mathbf{r}_m)$ occurs.
\end{itemize}
For fixed $\bar{\mathbf{x}}_m$, conditionally on $\tau_u$, the event $\mathcal{B}(\bar{\mathbf{x}}_m,\mathbf{r}_m)$ only involves arrival times outside $\{\bar{x}_1,\ldots,\bar{x}_m\}$, whereas the event $\{\tau_{\bar{x}_1}<\ldots<\tau_{\bar{x}_m}\}$ only involves the arrival times of $\bar{x}_1,\ldots,\bar{x}_m$. These two events are therefore independent, and the latter has probability $1/m!$. Thus, we get 
\begin{eqnarray}    \P(v\in D(u)\mid \tau_u) & \le & \sum_{m\ge 1} \sum_{\mathbf{r}_m\ge 1}  \sum_{\bar{\mathbf{x}}_m\in A(u,v,m,\mathbf{r}_m)}\P(\tau_{\bar{x}_1}<\ldots<\tau_{\bar{x}_m},\, \mathcal{B}(\bar{\mathbf{x}}_m,\mathbf{r}_m) \mid \tau_u) \nonumber \\
& = & \sum_{m\ge 1} \sum_{\mathbf{r}_m\ge 1}  \sum_{\bar{\mathbf{x}}_m\in A(u,v,m,\mathbf{r}_m)} \frac{1}{m!}\P(\mathcal{B}(\bar{\mathbf{x}}_m,\mathbf{r}_m) \mid \tau_u). \label{eq:descendance1}
\end{eqnarray}

Let us note that if $\bar{\mathbf{x}}_m\in A(u,v,m,\mathbf{r}_m)$, the balls $ (B(\bar{x}_{j},r_j-1))_{1\le j\le m}$ are pairwise disjoint. Indeed, for $i<j$, the ball $B(\bar{x}_{i},r_i-1)$ is contained in $\T_{v_i\to u}$
whereas $B(\bar{x}_{j},r_j-1)$ is contained in $\T_{v_i\to v}$. Moreover, we have the rough lower bound $\sharp B(\bar{x}_{i},r_i-1)\ge K^{r_i-1}$ so we get 
$$ \sharp \Big(\bigcup_{j=1}^m B(\bar{x}_{j},r_j-1)\setminus\{\bar{x}_1,\ldots,\bar{x}_m\}\Big) \ge \sum_{i=1}^m K^{r_i-1}-m.$$
This yields
\begin{equation}\label{eq:descendance2}
  \P( \mathcal{B}(\bar{\mathbf{x}}_m,\mathbf{r}_m) \mid \tau_u) \,\le\, \P( \tau>\tau_u \mid \tau_u)^{\sum_{i=1}^m K^{r_i-1}-m}.
\end{equation}
We now give an upper bound on the number of sequences $\bar{\mathbf{x}}_m\in A(u,v,m,\mathbf{r}_m)$. First note that 
$$\ell=d(u,v)\le 2\sum_{i=1}^m r_i.$$
Hence 
\begin{equation}\label{eq:descendance3}
  \sharp A(u,v,m,\mathbf{r}_m)=0 \quad \hbox{ if }  2\sum_{i=1}^m r_i<\ell.
\end{equation}
Otherwise, by definition, $\bar{x}_1$ must lie in the branch from $u$ toward $v$, so, since $d(\bar{x}_1,u)=r_1$, there are at most $K^{r_1}$ choices for $\bar{x}_1$. Once the vertex $\bar{x}_1$ is chosen, $v_1$ is entirely determined. The same argument applies recursively from $v_i$ toward $v$. Thus, using that $d(\bar{x}_2,v_1)=r_2$, we obtain that there remain at most $K^{r_2}$ choices for $\bar{x}_2$ and so on. This yields
\begin{equation}\label{eq:descendance4}
  \sharp A(u,v,m,\mathbf{r}_m) \le K^{\sum_{i=1}^m r_i}.
\end{equation}
As already remarked, the law of the arrival times does not matter (as long as it is atomless), so we may assume here that $(\tau_w)_{w\in \T}$ are i.i.d.\ uniform in $[0,1]$. Combining \eqref{eq:descendance1}, \eqref{eq:descendance2}, \eqref{eq:descendance3} and \eqref{eq:descendance4}, we get
 \begin{eqnarray*}    \P(v\in D(u)\mid \tau_u) 
&\le & \sum_{m\ge 1} \sum_{\substack{\mathbf{r}_m\ge 1\\ 2\sum_{i=1}^m r_i\ge \ell}}K^{\sum_{i=1}^m r_i} \frac{1}{m!} (1-\tau_u)^{\sum_{i=1}^m K^{r_i-1}-m}\\
&=&\sum_{m\ge 1} \sum_{k\ge \ell/2m}\sum_{\substack{||\mathbf{r}_m||_{\infty}=k \\ 2||\mathbf{r}_m||_{1}\ge \ell}}\frac{1}{m!}\prod_{i=1}^m\left(K^{ r_i}  (1-\tau_u)^{ K^{r_i-1}-1}\right).
\end{eqnarray*}
Summing this upper bound over all the vertices $v \in \T$, we get
   \begin{eqnarray*}  \E[\sharp D(u)\mid \tau_u]
   &=& 1+\sum_{v\in \T\setminus\{u\}}{\P(v\in D(u)\mid \tau_u)}\\
   &\le & 1+ (K+1)\sum_{\ell \ge 1} K^\ell  \sum_{m\ge 1} \sum_{k\ge \ell/2m}\sum_{\substack{||\mathbf{r}_m||_{\infty}=k \\ 2||\mathbf{r}_m||_{1}\ge \ell}}\frac{1}{m!}\prod_{i=1}^m\left(K^{ r_i}  (1-\tau_u)^{ K^{r_i-1}-1}\right)\\
    &\le & 1+ (K+1)\sum_{\ell \ge 1}   \sum_{m\ge 1} \sum_{k\ge \ell/2m}\sum_{\substack{||\mathbf{r}_m||_{\infty}=k \\ 2||\mathbf{r}_m||_{1}\ge \ell}}K^{2||\mathbf{r}_m||_{1}}\frac{1}{m!}\prod_{i=1}^m\left(K^{ r_i}  (1-\tau_u)^{ K^{r_i-1}-1}\right)\\
    &=& 1+ (K+1)\sum_{\ell \ge 1}   \sum_{m\ge 1} \sum_{k\ge \ell/2m}\sum_{\substack{||\mathbf{r}_m||_{\infty}=k \\ 2||\mathbf{r}_m||_{1}\ge \ell}}\frac{1}{m!}\prod_{i=1}^m\left(K^{ 3r_i}  (1-\tau_u)^{ K^{r_i-1}-1}\right).
\end{eqnarray*}
Note that for any $\tau_u\in (0,1)$, the function $z\mapsto K^{3z}(1-\tau_u)^{K^{z-1}-1}$ is bounded on $[0,\infty)$ by a constant $C\ge 1$ (which depends on $\tau_u$). Therefore, we get
 \begin{eqnarray*}  \E[\sharp D(u)\mid \tau_u]
    &\le & 1+(K+1)\sum_{\ell \ge 1}   \sum_{m\ge 1} \sum_{k\ge \ell/2m}\sum_{||\mathbf{r}_m||_{\infty}=k }\frac{C^m}{m!}\left(K^{ 3k}  (1-\tau_u)^{ K^{k-1}-1}\right)\\
    &\le & 1+(K+1)\sum_{k\ge 1}   \sum_{m\ge 1} \sum_{\ell\le 2mk}\frac{(kC)^m}{m!}\left(K^{ 3k}  (1-\tau_u)^{ K^{k-1}-1}\right)\\
    &= & 1+2(K+1)\sum_{k\ge 1}   \sum_{m\ge 1} k\frac{(kC)^m}{(m-1)!}\left(K^{ 3k}  (1-\tau_u)^{ K^{k-1}-1}\right)\\
     &\le & 1 + 2(K+1)\sum_{k\ge 1}  k^2C\left(e^{C}K^{3}\right)^{k}  (1-\tau_u)^{ K^{k-1}-1}\,<\, \infty.
\end{eqnarray*}
This proves that \(\E[\sharp D(u)\mid \tau_u]<\infty\), and hence \(D(u)\) is finite a.s. Since \(\T\) is countable, the conclusion holds simultaneously for every \(u\in\T\).
\end{proof}

\begin{remark} The argument of the proof above can be refined to give a faster-than-polynomial tail for the size of the descendant cluster and prove, in particular, that $\E[(\sharp D(u))^p\mid \tau_u]<\infty$ a.s.\ for every $p$.
\end{remark}

\section{The genealogical graph with seeds}\label{s:Gwithseed}

This section is devoted to the proof of Theorem \ref{thm:GwithSeeds}. Most statements of the theorem are consequences of Theorem \ref{thm:GwithoutSeeds} together with the preliminary results obtained in Section \ref{s:genealogicalprocess}. The main remaining work is to determine when a seed has infinitely many children. 

\medskip

Consider a finite set \( S := \{\tilde{s}_1, \ldots, \tilde{s}_M\} \) of seeds.
We construct the genealogical graph without seeds, \( \mathcal{G} \), and the one with seeds, \( \tilde{\mathcal{G}} \), on the same probability space using:
\begin{itemize}
    \item the same arrival times \( \tau_v \) for all \( v \notin S \),
    \item the same random indexing \( (\zi_j^v)_{j \ge 1} \) for every \( v \in \T \).
\end{itemize}
For \( v \in \T \), we still denote by \( \Gamma(v) = (V_i(v))_{i \ge 1} \) its ancestral path in \( \mathcal{G} \), and by \( W_n(v) \) the set of vertices explored to determine \( V_n(v) \). We similarly denote by \( \tilde{\Gamma}(v) = (\tilde{V}_i(v))_{i \ge 1} \) the ancestral path of \( v \) in \( \tilde{\mathcal{G}} \). For \(v\notin S\), we have the following dichotomy:
\begin{itemize}
    \item If $W_\infty(v)\cap S =\emptyset$, then $\tilde{\Gamma}(v)=\Gamma(v)$, meaning that the ancestral path $\tilde{\Gamma}(v)$ of $v$ is infinite or, equivalently, that this vertex does not belong to the genealogical component of a seed in \( \tilde{\mathcal{G}} \).
    \item Otherwise, let $n_0:=\inf\{n\ge 1:\; W_n(v)\cap S\neq \emptyset\}$. Then $\tilde{\Gamma}(v)=(V_1,\ldots,V_{n_0-1},s)$, where $s$ is the first seed encountered. In this case, $v$ belongs to the genealogical component of the seed $s$ in \( \tilde{\mathcal{G}} \) and its ancestral path $\tilde{\Gamma}(v)$ is finite.
\end{itemize}
Moreover, if $v$ is not a seed, the set of descendants $\tilde{D}(v)$ of $v$ in $\tilde{\mathcal{G}}$ is a subset of $D(v)$, the set of descendants of $v$ in $\mathcal{G}$.
We deduce that, if we write the decomposition of $\mathcal G$ into its connected components as
$$
\mathcal G = \bigsqcup_{i \in I} \mathcal{C}_i,
$$
At this stage, some components of $\mathcal G$ may a priori be entirely captured by seed components. Thus, the decomposition of $\tilde{\mathcal{G}}$ into connected components satisfies
$$
\tilde{\mathcal G} = \bigsqcup_{i \in \tilde I} \tilde{\mathcal{C}}_i \sqcup \bigsqcup_{j = 1}^M\tilde{\mathcal{U}}_j
$$
where 
\begin{itemize}
    \item $\tilde I\subset I$ and $\tilde{\mathcal{C}}_i$ is an infinite oriented subtree of $\mathcal{C}_i$.
    \item For each $j\in \{1,\ldots, M\}$, $\tilde{\mathcal{U}}_j$ is an oriented tree rooted at $\tilde{s}_j$. Moreover, for $v \in \tilde{\mathcal{U}}_j$, $v \neq \tilde{s}_j$, we have $\E[\sharp\tilde{D}(v) \mid \tau_v] \le \E[\sharp D(v) \mid \tau_v] <\infty$. In particular, we have
    $$
    \hbox{$\tilde{\mathcal{U}}_j$ is finite} \quad \Longleftrightarrow \quad \hbox{$\tilde{s}_j$ has finite degree in $\tilde{\mathcal{U}}_j$}\qquad\hbox{a.s.}
    $$    
\end{itemize}
In order to complete the proof of Theorem \ref{thm:GwithSeeds}, it remains to show
that
\begin{enumerate}
    \item[(i)] For every $i\in I$, there exists $v\in \mathcal{C}_i$ such that $\Gamma(v)$ does not end at a seed, \emph{i.e.},
    \begin{equation}\label{eq:theo3_1}
        \tilde I=I
    \end{equation} 
    \item[(ii)] We have
    \begin{equation}\label{eq:theo3_2}
        \P(\mbox{$\tilde{s}_j$ has infinite degree in $\tilde{\mathcal{U}}_j$})=
\begin{cases}
    1 & \mbox{if $\{y\in \T:\; d(y,\tilde{s}_j)=\min_{i\le M} d(y,\tilde{s}_i)\}$ is infinite}\\
    0 & \mbox{otherwise}
\end{cases}
    \end{equation} 
\end{enumerate}

\begin{figure}
\begin{center}
\begin{tabular}{c@{\hspace{3cm}}c}
\includegraphics[width=5cm]{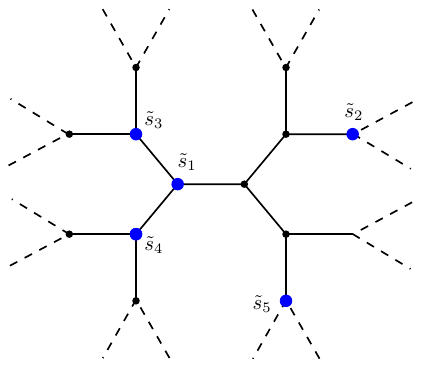} &
\includegraphics[width=5cm]{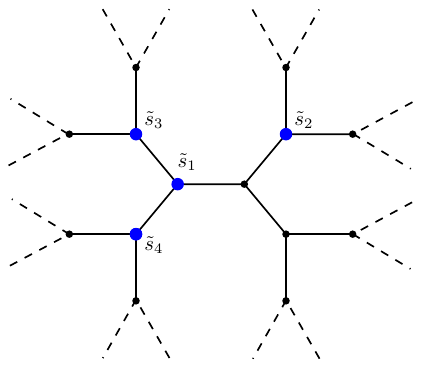}
\end{tabular}
\end{center}
\caption{Examples of seed configurations on the binary tree. Seeds are shown in blue. On the left, the seed $\tilde{s}_1$ has almost surely a finite set of descendants. On the right, the seed $\tilde{s}_1$ has almost surely infinite degree in $\tilde{\mathcal{G}}$.}
\label{Fig:cas_seed}
\end{figure}

\begin{proof}[Proof of \eqref{eq:theo3_1}] Fix $i\in I$ and $v\in \mathcal{C}_i$, and let $\Gamma(v):=(V_n)_{n\ge 1}$ be its ancestral path in the genealogical graph without seeds. We prove that, for $n$ large enough, $W_\infty(V_n)\cap S=\emptyset$, and hence $V_n\in \tilde{\mathcal{C}}_i \neq \emptyset$.

Recall that $R_{n+1}$ denotes the distance between $V_n$ and $V_{n+1}$. By Corollary \ref{coro:V_nenn2}, it holds that, almost surely,
$$ \log(K)R_n\le 2n \quad \mbox{ and } \quad
|V_{n+1}|\ge  \frac{n^2}{3\log K} \qquad\hbox{for $n$ large enough.}$$
Using that the set of seeds $S$ is finite, we deduce that, for $N$ large enough,
\[
S\cap \bigcup_{n\ge N}B(V_{n},R_{n+1})=\emptyset.
\]
Since $W_\infty(V_{N})\subset \bigcup_{n\ge N}B(V_{n},R_{n+1})$, this implies that $W_\infty(V_N)\cap S=\emptyset$, and therefore $V_N\in \tilde{\mathcal{C}}_i$.
\end{proof}

\begin{proof}[Proof of \eqref{eq:theo3_2}] For $v\in \T\setminus S$, recall that
$\tilde{p}(v)$ denotes the direct ancestor of $v$ in $\tilde{\mathcal{G}}$. Let 
$$\tilde{D}_1(\tilde{s}_j):=\{v\in \T:\; \tilde{p}(v)=\tilde{s}_j\}$$
denote the set of direct descendants of the seed $\tilde{s}_j$ in $\tilde{\mathcal{G}}$. 
Let
$$
\T':=\{u\in \T:\; d(u,\tilde{s}_j)=\min_{i\le M} d(u,\tilde{s}_i)\}.
$$
Note that $\tilde{D}_1(\tilde{s}_j)\subset \T'$ because if \( v \notin \T' \), then there exists another seed closer to \( v \) than \( \tilde{s}_j \), so it cannot happen that \( \tilde{p}(v) = \tilde{s}_j \). Hence 
$$\T' \mbox{ finite }\Longrightarrow \P(\tilde{s}_j \mbox{ has infinite degree in }\tilde{\mathcal{U}}_j )=0.$$
This shows the second case of \eqref{eq:theo3_2}. Let us now assume that $\T'$ is infinite. Note that $\T'$ is a subtree of $\T$ and, because it is infinite, there exists a vertex $o'\in \T$ such that $\T_{o'} \subset \T'$ and we can furthermore assume w.l.o.g. $\tilde{s}_j\notin \T_{o'}$. We prove that
\begin{equation}\label{eq:seed_infini}
    \T_{o'} \cap \tilde{D}_1(\tilde{s}_j) \hbox{ is infinite a.s.}
\end{equation}

Let $h:=d(\tilde{s}_j,o')$. For $n\ge 1$, let $u_n$ be the vertex with minimal arrival time among the vertices $v$ of $\T_{o'}$ such that $d(o',v)=n$. Define the event
$$
\mathcal{E}_n:=\left\{\tau_{u_n}\le \tau_v \text{ for all } v \text{ such that } d(v,u_n)\le n+h \text{ and } d(v,o')\ge \frac{n}{3}\right\}.
$$

Note that $d(u_n,\tilde{s}_j)=n+h$, so the ancestor of $u_n$ is at distance at most $n+h$ from $u_n$. Hence, if $\mathcal{E}_n$ occurs, then
$$
\tilde{p}(u_n)\in \{v\in \T_{o'}:\ d(o',v)<n/3\}\cup \{v\in \T:\ d(o',v)\le h\}.
$$

We now prove the following two facts:
\begin{itemize}
    \item[(i)] Infinitely many events $\mathcal{E}_n$ occur almost surely.
    \item[(ii)] $\tilde{p}(u_n)\notin \{v\in \T_{o'}:\ d(o',v)<n/3\}$ for all $n$ large enough almost surely.
\end{itemize}

\begin{figure}
    \centering
\includegraphics[width=7cm]{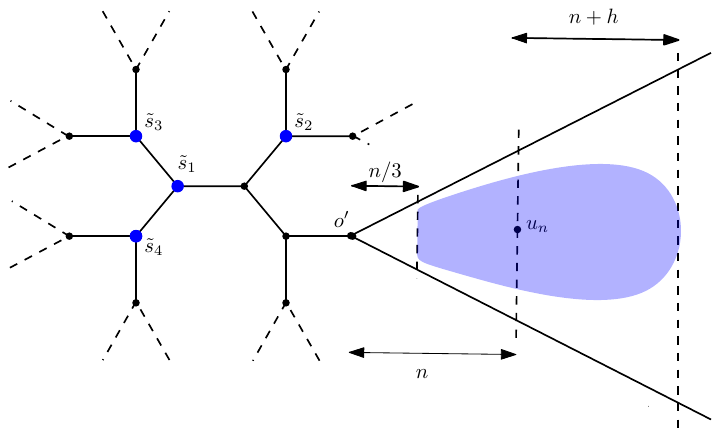}
\caption{\small{Illustration of the notation used in the proof of \eqref{eq:theo3_2} for the seed $\tilde{s}_1$ in the configuration on the right of Figure \ref{Fig:cas_seed}. On the event $\mathcal{E}_n$, no vertex in the blue region has been colored before $u_n$. For $n$ large enough, if this event occurs, the ancestor of $u_n$ is either $\tilde{s}_1$ or $\tilde{s}_2$, each with probability $1/2$.}}
\label{fig:infiniteseed}
\end{figure}

On the one hand, we note that
$$
\sharp\left\{v:\ d(v,u_n)\le n+h \text{ and } d(v,o')\ge \frac{n}{3}\right\}
\le \sharp B(u_n,n+h)
\le (K+1)K^{n+h}.
$$
Since $\tau_{u_n}$ is the minimum of $K^n$ i.i.d.\ random variables, the bound above yields
$$
\P(\mathcal{E}_n) \ge \frac{K^n}{K^n+(K+1)K^{n+h}} = \frac{1}{1+(K+1)K^h} =:c_0>0.
$$
Moreover, for $n>3h$, the event $\mathcal{E}_n$ only depends on the arrival times of the vertices $v$ of $\T_{o'}$ such that
$$
\frac{n}{3}\le d(o',v)<3n.
$$
Thus, the events $(\mathcal{E}_{9^n})_{n>3h}$ are independent and the Borel--Cantelli lemma implies that (i) holds.

On the other hand, a similar computation using that $\sharp\{v:\ d(o',v)<n/3\} \le (K+1)K^{\lfloor n/3 \rfloor}$ shows that
$$
\P\left(\min \{\tau_v:\ d(o',v)<n/3\}\le \tau_{u_n}\right)
\le
\frac{(K+1)K^{n/3}}{K^n+(K+1)K^{n/3}}
\le
(K+1)K^{-2n/3}.
$$
Hence, the Borel--Cantelli lemma now implies that
$$
\min \{\tau_v:\ d(o',v)<n/3\}>\tau_{u_n}
$$
for all $n$ large enough. This implies that the ancestor of $u_n$ cannot belong to $\{v\in \T_{o'}:\ d(o',v)<n/3\}$, so (ii) holds. 

Let us now conclude that \eqref{eq:seed_infini} holds.
Using (i) and (ii), we find that, for infinitely many $n$,
$$
\tilde{p}(u_n)\in \{v\in \T:\ d(o',v)\le h\}.
$$
Since $\tau_{u_n}$ tends to $0$ almost surely, it follows that, ultimately, 
$\tilde{p}(u_n)$ must be a seed, \emph{i.e.}
$$
\tilde{p}(u_n)\in \{s\in S:\ d(o',s)=d(o',\tilde{s}_j)\}\qquad\hbox{infinitely often a.s.}
$$
Finally, by the independence of the tie-breaking indexings at the distinct vertices $u_n$, each seed in this finite set is chosen infinitely often; in particular, $\tilde{s}_j$ has infinitely many direct descendants a.s.
\end{proof}

\section{Local limits}\label{s:local_limits}

We now come back to the case of finite trees and study the local limits of the random coloring in different settings. Recall that $\T^{(\ell)} := \T \cap B(o,\ell)$ denotes the finite tree of height $\ell$ rooted at $o$.
\medskip

Let $s_1,\ldots, s_m$ be a (possibly empty) set of seeds. We always assume $\ell$ is sufficiently large so that $s_i \in \T^{(\ell)}$ for all $i$. We can construct the genealogical graph $\tilde{\mathcal{G}}$ on $\T$ and the genealogical graph $\tilde{\mathcal{G}}^{(\ell)}$ on the finite tree $\T^{(\ell)}$ (both with seeds $s_1,\ldots, s_m$) on the same probability space using the natural coupling: we use the same arrival times $(\tau_v)_{v\in \T}$ and the same uniform increasing indexings $((\zi_j^w)_{j\ge 1}, w\in \T)$ to break ties (and we simply ignore the vertices not in $\T^{(\ell)}$ when constructing $\tilde{\mathcal{G}}^{(\ell)}$).
\medskip

By definition, the genealogical graph is constructed by a local exploration around each vertex of the tree, which makes it nicely behaved with respect to the notion of local convergence. More precisely, we have the straightforward result, stated here for trees but valid for any locally finite connected graph:

\begin{prop} We have
$$
\tilde{\mathcal{G}}^{(\ell)} \underset{\ell\to\infty}{\longrightarrow}  \tilde{\mathcal{G}}
$$
for the topology of local convergence. In other words, for any $N\geq 0$ we have
$$
\tilde{\mathcal{G}} \cap B(o,N) \; = \; \tilde{\mathcal{G}}^{(\ell)} \cap B(o,N) \qquad\hbox{for all $\ell$ large enough a.s.}
$$
where $\tilde{\mathcal{G}} \cap B(o,N)$ (resp.\ $\tilde{\mathcal{G}}^{(\ell)} \cap B(o,N)$) stands for the genealogical graph $\tilde{\mathcal{G}}$ (resp.\ $\tilde{\mathcal{G}}^{(\ell)}$) restricted to the set of vertices $B(o,N)$.
\end{prop}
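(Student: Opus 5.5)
The plan is to show that, for a fixed vertex $x\in B(o,N)$, the parent of $x$ in $\tilde{\mathcal{G}}^{(\ell)}$ agrees with its parent in $\tilde{\mathcal{G}}$ for all $\ell$ large enough, almost surely. Since $B(o,N)$ is finite, we can then take the maximum of the finitely many thresholds, and the result follows. Indeed, the restriction of a genealogical graph to $B(o,N)$ is determined by the parent pointers of its vertices, since edges are oriented from a vertex to its parent. If $x$ is a seed, it has no parent in either graph and there is nothing to prove, so fix a non-seed vertex $x$.

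In $\tilde{\mathcal{G}}$, the parent is obtained by scanning $(\zi_j^x)_{j\ge1}$ and stopping at $J(x)=\inf\{j:\tau_{\zi_j^x}<\tau_x\}$, with seeds having $\tau=0$. The first step is to check that $J(x)<\infty$ almost surely. If $S$ is nonempty this is immediate, because some seed occurs at a finite index. Otherwise, infinitely many vertices have arrival times that are i.i.d. atomless, independent of $\tau_x>0$, so almost surely one of them has $\tau<\tau_x$. Put $r:=r(x)=d(x,\zi^x_{J(x)})$.

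In $\tilde{\mathcal{G}}^{(\ell)}$, the parent is obtained by scanning the subsequence of $(\zi_j^x)$ consisting of vertices in $\T^{(\ell)}$. This subsequence is still an increasing indexing of $\T^{(\ell)}$ centered at $x$, and it is the one used in the natural coupling. Choose $\ell\ge N+r$. Then $B(x,r)\subset B(o,N+r)\subset\T^{(\ell)}$, so the terms $\zi_1^x,\dots,\zi_{J(x)}^x$ are unchanged by the restriction. Because the ordering is increasing, these are exactly the first $J(x)$ terms of the restricted sequence. The stopping rule therefore selects the same index and the same vertex, which gives $p^{(\ell)}(x)=p(x)$ for all $\ell\ge N+r(x)$.

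The only point needing care is the coupling convention: we must make sure that ignoring vertices outside $\T^{(\ell)}$ means restricting the indexing, and that this does not perturb the order among the first $J(x)$ terms. This is clear because those terms all lie in the ball $B(x,r)$ and precede every vertex farther away. Beyond this, the argument is routine. It also applies verbatim to any locally finite connected graph exhausted by balls.
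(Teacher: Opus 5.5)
Your proposal is correct and follows the paper's argument: a non-seed vertex $v$ has the same parent in both graphs as soon as $\ell \ge d(o,v)+d(v,\tilde{p}(v))$, because the exploration around $v$ then never leaves $\T^{(\ell)}$, and finiteness of $B(o,N)$ finishes the proof. Your extra checks just make explicit what the paper leaves implicit: that $J(x)<\infty$ a.s., and that restricting the increasing indexing to $\T^{(\ell)}$ preserves its first $J(x)$ terms.
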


\begin{proof}
We just need to prove that any vertex $v$ (that is not a seed) has the same direct ancestor in $\tilde{\mathcal{G}}^{(\ell)}$ and $\tilde{\mathcal{G}}$ for $\ell$ large enough. Since both graphs are constructed with the same arrival times and the same increasing indexing, this is true as soon as the exploration around $v$ does not reach the boundary of $\T^{(\ell)}$, \emph{i.e.}, as soon as $\ell \geq d(o, v) + d(v, \tilde{p}(v))$, where $\tilde{p}(v)$ denotes the direct ancestor of $v$ in $\tilde{\mathcal{G}}$. 
\end{proof}

While the genealogical graph is continuous with respect to the local topology, the coloring itself does not have this property because the color of a vertex is related to the connectedness of vertices in the genealogical graph and therefore to the existence of an infinite ancestral path, which is a non-local notion. In the rest of this section, we study the limits of the coloring in two particular cases: 
\begin{itemize}
\item[(a)] Two fixed seeds colored blue and red, respectively. 
\item[(b)] A single blue seed at the root and a red seed on each leaf of $\T^{(\ell)}$. 
\end{itemize}

\subsection{Local limit with two seeds (Theorem \ref{theo:twoseed})}

We fix another vertex $o'$ and consider the random coloring on $\T^{(\ell)}$ with a blue seed at the root $o$ and a red seed at $o'$. This gives a partition of the vertices of $\T^{(\ell)}$ into the blue set $\tilde{B}^{(\ell)}$ and the red set $\tilde{R}^{(\ell)}$, which are precisely the vertices of the two connected components of the genealogical graph $\tilde{\mathcal{G}}^{(\ell)}$ rooted respectively at $o$ and $o'$.

\medskip

As in the previous sections, we denote by $\tilde{\Gamma}(v)=(\tilde{V}_i)_{i\le n(v)}$ the ancestral path of $v$ in $\tilde{\mathcal{G}}$. According to Theorem \ref{thm:GwithSeeds}, this path may be infinite ($n(v)=\infty$) or finite ($n(v) < \infty$) and, in the latter case, it ends either at $o$ or $o'$. Similarly, we denote by $\tilde{\Gamma}^{(\ell)}(v)=(\tilde{V}_i^{(\ell)})_{i\le n_\ell(v)}$ the ancestral path of $v$ in $\tilde{\mathcal{G}}^{(\ell)}$. In this case, the path is necessarily finite and $\tilde{V}^{(\ell)}_{n_\ell(v)}\in \{o,o'\}$ defines the color of the vertex $v$.
\medskip

Let $(a_0,a_1,\ldots,a_d)$ denote the vertices on the path $[o,o']$, ordered by increasing distance from $o$. Thus $a_0=o$, $a_d=o'$, and $d(o, a_i) = i$.
Let
$$
\overline{\T}_i := \T_{a_i\setminus\{o,o'\}}
$$
be the subtree of $\T$ composed of $\{a_i\}$ and all the connected components of $\T\setminus\{a_i\}$ that contain neither $o$ nor $o'$
(see Figure \ref{fig:2seed} for an illustration of $\overline{\T}_i$). The family $(\overline{\T}_i)_{0\le i\le d}$ forms a partition of $\T$. Moreover, a vertex $v\in\overline{\T}_i$ is closer to $o$ than to $o'$ if $i<d/2$, at equal distance from both if $i=d/2$, and closer to $o'$ if $i>d/2$.

\begin{figure}
    \centering
\includegraphics[width=11cm]{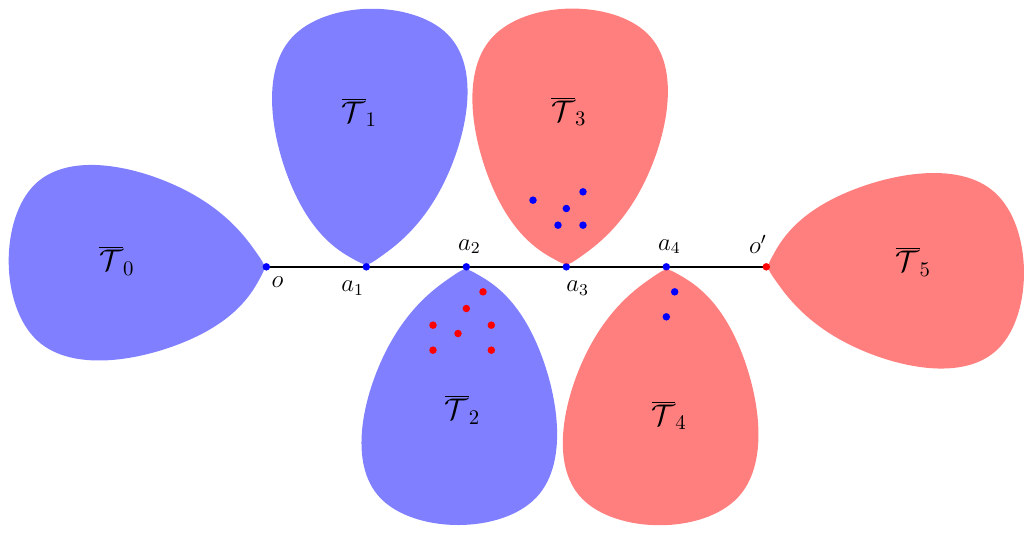}
\caption{\small{Illustration of the trees $(\overline{\T}_i)_{0\le i\le d}$ for $d=5$. The two-seed coloring process on $\T^{(\ell)}$ converges locally to the coloring shown in the figure. All vertices of $\overline{\T}_0$ (resp.\ $\overline{\T}_5$) are almost surely blue (resp.\ red). Moreover, $\overline{\T}_1$ and $\overline{\T}_2$ contain only finitely many red vertices, whereas $\overline{\T}_3$ and $\overline{\T}_4$ contain only finitely many blue vertices.}}
\label{fig:2seed}
\end{figure}

\begin{lemma}\label{lemm:derniersaut}
    Let $v\in \T$. We have
\begin{itemize}
    \item[(i)]  If $\tilde{\Gamma}(v)$ is finite, then for $\ell$ large enough, 
    $$\tilde{\Gamma}^{(\ell)}(v)=\tilde{\Gamma}(v).$$
    \item[(ii)]  If $\tilde{\Gamma}(v)$ is infinite, then, for any vertex $w\in \dir \tilde{\Gamma}(v)$ and for all $\ell$ large enough, the penultimate vertex of $\tilde{\Gamma}^{(\ell)}(v)$ is in $\T_w$, \emph{i.e.},
    $$\tilde{V}^{(\ell)}_{n_\ell(v)-1}\in \T_w.$$
\end{itemize}
\end{lemma}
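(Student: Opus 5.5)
The proof splits according to whether $\tilde\Gamma(v)$ is finite or infinite. In both cases we only use the natural coupling of $\tilde{\mathcal G}$ and $\tilde{\mathcal G}^{(\ell)}$ (same arrival times, same indexings $(\zi^w_j)$), together with the observation already made in the proof of the previous proposition. That observation says that a vertex $x$ has the same direct ancestor in $\tilde{\mathcal G}^{(\ell)}$ and in $\tilde{\mathcal G}$ as soon as the exploration ball $B(x,d(x,\tilde p(x)))$ lies in $\T^{(\ell)}$. This holds as soon as $\ell\ge |x|+d(x,\tilde p(x))$.

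\emph{Case (i).} If $\tilde\Gamma(v)=(\tilde V_1,\dots,\tilde V_{n(v)})$ is finite, it ends at $o$ or $o'$. We set
\[
\ell_0:=\max_{i<n(v)}\bigl(|\tilde V_i|+d(\tilde V_i,\tilde V_{i+1})\bigr)<\infty .
\]
For $\ell\ge\ell_0$ every step of the ancestral path is determined inside $\T^{(\ell)}$. An induction on $i$ then gives $\tilde V^{(\ell)}_i=\tilde V_i$ for all $i\le n(v)$. The finite path stops at the same seed, so $\tilde\Gamma^{(\ell)}(v)=\tilde\Gamma(v)$.

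\emph{Case (ii).} Let $\tilde\Gamma(v)=(\tilde V_i)_{i\ge1}$ be infinite, and fix $w$ on the ray $\dir\tilde\Gamma(v)$. The path $\tilde\Gamma(v)$ coincides after finitely many steps with $\Gamma(\tilde V_N)$ in the seedless graph, so Corollary \ref{coro:V_nenn2} and \eqref{eq:minVn} apply to its tail. There is $k_0$ such that $\tilde V_k\in\T_w$ for $k\ge k_0$, with $|\tilde V_{k+1}|\ge|\tilde V_k|+R_{k+1}/2$ and $R_{k+1}\le 2k/\log K$. Let
\[
k_\ell:=\inf\{k:\ |\tilde V_k|+R_{k+1}>\ell\}.
\]
Then $k_\ell\to\infty$, and by the same induction as in (i) the two paths agree up to index $k_\ell$. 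Moreover $|\tilde V_{k_\ell}|\ge \ell-R_{k_\ell+1}$, so $\tilde V_{k_\ell}$ lies within distance $O(\sqrt{\ell})$ of the leaves and deep inside $\T_w$. It then suffices to show that the finite path, after reaching $\tilde V_{k_\ell}$, stays in $\T_w$ until its final jump to a seed. The penultimate vertex is then in $\T_w$. Since the seeds $o,o'$ are outside $\T_w$ for $w$ far enough on the ray, and $\T_{w'}\subset\T_w$ for $w'$ further along, it is enough to prove this for $w$ far on the ray.

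\emph{Main obstacle.} The hard step is controlling the finite-volume path after divergence. Near the leaves, spheres of $\T^{(\ell)}$ are truncated, so Proposition \ref{prop:V_nconditionnelle} does not apply verbatim. I would argue as follows. Every vertex $y$ of $\tilde\Gamma^{(\ell)}(v)$ after $\tilde V_{k_\ell}$ has $\tau_y<\tau_{\tilde V_{k_\ell}}$, and $\tau_{\tilde V_{k_\ell}}$ is tiny, of order $e^{-k_\ell}$ by Proposition \ref{prop:Arnold}(i). Suppose a jump goes from $y\in\T_w$ to a non-seed vertex outside $\T_w$. Then the ball of radius $d(y,\T\setminus\T_w)-1\ge |y|-|w|-1$ around $y$, intersected with $\T^{(\ell)}$, contains no vertex earlier than $y$.

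This ball has at least about $K^{(|y|-|w|)/2}$ vertices, even near the leaves, because one can go up about half the radius and then down. So the event forces $\gtrsim K^{c\sqrt{\ell}}$ vertices to have arrival time larger than a value that is itself a record among these explored regions. I would bound its probability by a union bound over the possible positions $y$ along the path and over $\ell$, in the spirit of the descendant estimate in the proof of Item 2. The resulting bound should be summable in $\ell$, and Borel--Cantelli would then conclude. If the summation fails, the fallback is a weaker statement that suffices for the lemma: the last non-seed vertex must have an empty ball of radius about $d(\cdot,\{o,o'\})$, so it is the minimal-time vertex of a huge region. One would then locate it in $\T_w$ by comparing the volumes of $\T_w\cap\T^{(\ell)}$ and of its complement near $\tilde V_{k_\ell}$.
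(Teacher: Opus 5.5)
Part (i) is fine and matches the paper. In part (ii), however, the step you yourself label the main obstacle is not proved. You only expect the union bound over positions $y$ to be summable, and you keep a fallback in case it is not. The heuristic behind it also does not hold up as stated. The volume bound $K^{c\sqrt{\ell}}$ for the empty ball around $y$ presupposes $|y|-|w|\gtrsim\sqrt{\ell}$ for every later vertex $y$ of the finite path. Nothing you have shown prevents $\tilde\Gamma^{(\ell)}(v)$ from moving back up toward $w$ after $\tilde V_{k_\ell}$, where that ball is small. Moreover, the positions $y$ are random and correlated with the very arrival times you want to bound. So the argument has a genuine gap exactly where the content of (ii) lies.

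The missing observation is that no estimate is needed, because the seed $o$ caps the length of every jump.

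\begin{itemize}
\item \emph{The escape region is finite.} Take $w$ far enough on the ray that $o,o'\notin\T_w$. For $y\in\T_w\cap\T^{(\ell)}$, the seed $o$ has arrival time $0$, so the parent of $y$ in $\tilde{\mathcal G}^{(\ell)}$ is at distance at most $d(y,o)=d(y,w)+|w|$. Since $d(y,u)=d(y,w)+d(w,u)$ for every $u\notin\T_w$, a non-seed parent outside $\T_w$ must lie in the set $A_w:=B(w,|w|)\setminus(\T_w\cup\{o,o'\})$. This set is finite and does not depend on $\ell$.
\item \emph{Choice of $N$.} Because $A_w$ is finite and $\tau_{\tilde V_n}\to0$, you can fix $N$, independent of $\ell$, with $\tilde V_n\in\T_w$ for $n\ge N$ and $\tau_{\tilde V_N}<\min_{u\in A_w}\tau_u$. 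For $\ell$ large, the first $N$ vertices of $\tilde\Gamma^{(\ell)}(v)$ and $\tilde\Gamma(v)$ coincide.
\item \emph{Conclusion.} Let $Y^{(\ell)}$ be the last non-seed vertex of $\tilde\Gamma^{(\ell)}(v)$ lying in $\T_w$. Then $\tau_{Y^{(\ell)}}\le\tau_{\tilde V_N}$, so its parent is not in $A_w$. By maximality, the parent is not a non-seed vertex of $\T_w$. By the distance bound, it is not outside $\T_w\cup A_w\cup\{o,o'\}$. Hence the parent is a seed, and $Y^{(\ell)}$ is the penultimate vertex.
\end{itemize}

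You already noted that arrival times along the finite path fall below $\tau_{\tilde V_{k_\ell}}$. Combining this with the fixed finite buffer $A_w$ makes $k_\ell$, the growth estimates of Corollary~\ref{coro:V_nenn2}, and Borel--Cantelli unnecessary.
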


\begin{proof}
Item (i) is a direct consequence of the previous proposition, applied to a ball containing the finite path $\tilde{\Gamma}(v)$. Assume now that $\tilde{\Gamma}(v)$ is infinite, and fix a vertex $w$ on the ray $\dir\tilde{\Gamma}(v)$. If $w=o$, there is nothing to prove. We therefore assume that $w\neq o$. There exists $N\ge 1$ such that
$\tilde{V}_n\in\T_w$ for every $n\ge N$. Set
$$
A_w:=B(w,d(w,o))\setminus(\T_w\cup\{o,o'\}).
$$
Since $A_w$ is finite and $\tau_{\tilde{V}_n}\to 0$, by possibly increasing $N$ we may assume that $\tau_{\tilde{V}_N}<\min\{\tau_u:\ u\in A_w\}$.

By the previous proposition, applied to a ball containing $(\tilde{V}_1,\ldots,\tilde{V}_N)$, the first $N$ vertices of $\tilde{\Gamma}^{(\ell)}(v)$ coincide with those of $\tilde{\Gamma}(v)$ for all sufficiently large $\ell$.
Let $Y^{(\ell)}$ denote the last non-seed vertex of the path $\tilde{\Gamma}^{(\ell)}(v)$ that belongs to $\T_w$. Since $\tilde{V}_N \in \T_w$, it implies that $Y^{(\ell)}$ appears at or after that vertex in the ancestral path:
$$
\tau_{Y^{(\ell)}}
\le \tau_{\tilde{V}_{N}}
<
\min\{\tau_u:\ u\in A_w\}.
$$
Consequently, the direct ancestor of $Y^{(\ell)}$ in $\tilde{\mathcal{G}}^{(\ell)}$ cannot belong to $A_w$. If this ancestor were not a seed, then, by the definition of $Y^{(\ell)}$, it could not belong to $\T_w$ either. It would then belong to $\T\setminus(\T_w\cup A_w\cup\{o,o'\})$, and hence would be farther from $Y^{(\ell)}$ than the seed $o$, which is impossible by the definition of the direct ancestor. Thus this direct ancestor belongs to $\{o,o'\}$, and hence $Y^{(\ell)}$ is the penultimate vertex of $\tilde{\Gamma}^{(\ell)}(v)$, \emph{i.e.}
$$\tilde{V}^{(\ell)}_{n_\ell(v)-1}\in \T_w.$$
\end{proof}

\begin{proof}[Proof of Proposition \ref{prop:twoseed}]
We assume here that $d=d(o,o')$ is odd and show almost-sure convergence of the coloring under the natural coupling. Furthermore, we establish that the limiting color of a vertex can be read from the ancestral path in $\tilde{\mathcal G}$: either from its terminal seed, if it is finite, or from the ``side'' of its asymptotic direction, if it is infinite. 
This follows directly from the following observations. Fix $v\in \T$; we have the following alternatives:
\begin{itemize}
    \item If $\tilde{\Gamma}(v)$ is finite and ends at $o$ (resp.\ $o'$), then the same property holds for $\tilde{\Gamma}^{(\ell)}(v)$ for $\ell$ large enough and so $v\in \tilde{B}^{(\ell)}$ (resp.\ $v\in \tilde{R}^{(\ell)}$) for $\ell$ large enough.
    \item Assume now that $\tilde{\Gamma}(v)$ is infinite, with $\dir\tilde{\Gamma}(v) \in \partial \overline{\T}_i$ and $i<d/2$. Choose a vertex $w$ on the ray $\dir\tilde{\Gamma}(v)$ such that $\T_w\subset \overline{\T}_i$. By the previous lemma, for $\ell$ large enough, $\tilde{V}^{(\ell)}_{n_\ell(v)-1}\in \T_w\subset \overline{\T}_i$. Thus, we have $d(\tilde{V}^{(\ell)}_{n_\ell(v)-1},o)<d(\tilde{V}^{(\ell)}_{n_\ell(v)-1},o')$ and so necessarily $\tilde{V}^{(\ell)}_{n_\ell(v)}=o$, \emph{i.e.}, $v\in \tilde{B}^{(\ell)}$.
    \item For a similar reason, $v\in \tilde{R}^{(\ell)}$ for $\ell$ large enough if $\dir\tilde{\Gamma}(v) \in \partial \overline{\T}_i$ with $i>d/2$.
\end{itemize}
These facts show the a.s. convergence stated above and the description of the limiting blue and red sets $\tilde{B}^{(\infty)}, \tilde{R}^{(\infty)}$ as
$$\tilde{B}^{(\infty)}:=\{v\in \T:\; \tilde{\Gamma}(v) \hbox{ ends at $o$  or } \tilde{\Gamma}(v) \hbox{ is infinite and }\dir \tilde{\Gamma}(v)\in \cup_{i<d/2} \partial\overline{\T}_i\}, $$
$$\tilde{R}^{(\infty)}:=\{v\in \T:\; \tilde{\Gamma}(v) \hbox{ ends at $o'$  or } \tilde{\Gamma}(v) \hbox{ is infinite and }\dir \tilde{\Gamma}(v)\in \cup_{i>d/2} \partial\overline{\T}_i\}. $$
To complete the proof of Proposition \ref{prop:twoseed}, it remains to show that 
 $$\tilde{B}^{(\infty)} \cap \{v \in \T : d(v,o)>d(v,o')\}\hbox{ is a.s. finite. }$$ 
 \emph{i.e.}, only finitely many vertices of $\overline{\T}_i$ for $i>d/2$ have an ancestral path $\tilde{\Gamma}(v)$ which ends at $o$ or is infinite with asymptotic direction $\dir \tilde{\Gamma}(v)\in \partial \overline{\T}_j$ for some $j<d/2$. Let $v$ be such a vertex. Then, there exists $z\in \tilde{\Gamma}(v)$ such that 
 $$z\in \cup_{i>d/2} \overline{\T}_i \qquad \hbox{ and } \qquad \tilde{p}(z) \in  \cup_{i<d/2}\overline{\T}_i.$$
 Using the fact that $d(\tilde{p}(z),z)\le d(z,o')$, we get that $\tilde{p}(z)\in B(o',d-1)\setminus\{o'\}$.
 Thus
 $$\tilde{B}^{(\infty)} \cap \{v \in \T : d(v,o)>d(v,o')\}\subset \bigcup_{u\in B(o',d-1)\setminus\{o'\}}\tilde{D}(u),$$
 where we recall that $\tilde{D}(u)$ denotes the set of descendants of $u$ in the graph $\tilde{\mathcal{G}}$. We conclude using the fact that $B(o',d-1)\setminus\{o'\}$ is a finite set, together with Theorem \ref{thm:GwithSeeds}, which states that $\tilde{D}(u)$ is a.s. finite for $u\notin \{o,o'\}$. The second finiteness statement follows by symmetry.
\end{proof}
\medskip

\begin{remark}
When $d(o,o')$ is even, the natural coupling does not give an a.s. local limit in general. Indeed, for a vertex whose infinite ancestral path has asymptotic direction in $\partial\overline{\T}_{d/2}$, the penultimate vertex of $\tilde{\Gamma}^{(\ell)}(v)$ eventually lies in the middle subtree, where the two seeds are at the same distance. As $\ell$ grows, infinitely many fresh tie-breakings between $o$ and $o'$ are therefore used, and the color of such a vertex does not stabilize, but takes both colors infinitely often. This is why Theorem \ref{theo:twoseed} proved below is stated as a convergence in distribution, with an additional Bernoulli choice for each middle-direction component.
\end{remark}
\medskip

\begin{proof}[Proof of Theorem \ref{theo:twoseed}]
It suffices to show that for any finite collection of vertices $(v_1,\ldots,v_k)$, the following properties hold:
\begin{itemize}
    \item[(i)] If $\tilde{\Gamma}(v_i)$ is finite and ends at $o$ (resp.\ $o'$), or if $\dir \tilde{\Gamma}(v_i)\in \partial \overline{\T}_j$ with $j<d/2$ (resp.\ $j>d/2$), then $v_i\in \tilde{B}^{(\ell)}$ (resp.\ $\tilde{R}^{(\ell)}$) for all sufficiently large $\ell$.

    \item[(ii)] If $\dir \tilde{\Gamma}(v_i)\in \partial \overline{\T}_{d/2}$, define
    $$
    \mathcal I:=\bigl\{i'\le k:\ \tilde{\Gamma}(v_{i'}) \text{ coalesces with } \tilde{\Gamma}(v_i)\bigr\}.
    $$
    Then, for all sufficiently large $\ell$, the vertices $(v_{i'})_{i'\in\mathcal I}$ receive the same color in the two-seed coloring process on $\T^{(\ell)}$, and this color is blue or red with probability $1/2$ each. Moreover, this color is asymptotically independent of the colors of $(v_{i'})_{i'\notin\mathcal I}$.
\end{itemize}

Property (i) was established in Proposition \ref{prop:twoseed} when $d=d(o,o')$ is odd. The proof given there in fact applies verbatim when $d$ is even and $j \neq d/2$. We now prove (ii). We assume that $d$ is even, since otherwise there is nothing to prove. Also, to avoid cumbersome notation, we only treat the case of two vertices $v_1$ and $v_2$; the general case follows by the same argument. 
\medskip

Assume that $\dir \tilde{\Gamma}(v_1)\in \partial \overline{\T}_{d/2}$. Suppose first that $\tilde{\Gamma}(v_1)$ and $\tilde{\Gamma}(v_2)$ coalesce. Then, for all sufficiently large $\ell$, the truncated ancestral paths $\tilde{\Gamma}^{(\ell)}(v_1)$ and $\tilde{\Gamma}^{(\ell)}(v_2)$ also coalesce. Consequently, $v_1$ and $v_2$ receive the same color in the two-seed coloring process on $\T^{(\ell)}$. By Lemma \ref{lemm:derniersaut}, the penultimate vertex of $\tilde{\Gamma}^{(\ell)}(v_1)$ is in $\overline{\T}_{d/2}$ for all sufficiently large $\ell$. Since this vertex is at equal distance from $o$ and $o'$, it is connected to either seed with probability $1/2$ each. This proves (ii) in the case of coalescing paths.

\medskip

Assume now that $\tilde{\Gamma}(v_1)$ and $\tilde{\Gamma}(v_2)$ do not coalesce. We show that, for all sufficiently large $\ell$, the paths $\tilde{\Gamma}^{(\ell)}(v_1)$ and $\tilde{\Gamma}^{(\ell)}(v_2)$ are disjoint, except possibly for their last vertex. By Item 5 of Theorem \ref{thm:GwithoutSeeds}, we have
$$
\dir \tilde{\Gamma}(v_1)\neq \dir \tilde{\Gamma}(v_2).
$$
(We assume here that $\tilde{\Gamma}(v_2)$ is infinite; the finite case is even simpler.) Hence, there exist vertices $w_1\in \overline{\T}_{d/2}$ and $w_2\in \T$ such that $\T_{w_1}$ and $\T_{w_2}$ are disjoint, while
$$
\dir \tilde{\Gamma}(v_1)\in \partial \T_{w_1},
\qquad
\dir \tilde{\Gamma}(v_2)\in \partial \T_{w_2}.
$$
By Lemma \ref{lemm:derniersaut}, for all sufficiently large $\ell$,
$$
\tilde{V}^{(\ell),1}_{n_\ell(v_1)-1}\in {\T}_{w_1},
\qquad
\tilde{V}^{(\ell),2}_{n_\ell(v_2)-1}\in {\T}_{w_2},
$$
where $(\tilde{V}_n^{(\ell),i})_{n\ge1}$ denotes the ancestral path of $v_i$ in $\T^{(\ell)}$. Since $\T_{w_1}$ and $\T_{w_2}$ are disjoint, the paths $\tilde{\Gamma}^{(\ell)}(v_1)$ and $\tilde{\Gamma}^{(\ell)}(v_2)$ are disjoint, except possibly at their final vertex. As before, the fact that $\dir \tilde{\Gamma}(v_1)\in \partial \overline{\T}_{d/2}$ implies that the penultimate vertex of $\tilde{\Gamma}^{(\ell)}(v_1)$ is at equal distance from $o$ and $o'$. Hence, the color of $v_1$ is blue or red with probability $1/2$ each, depending on the increasing ordering attached to $\tilde{V}^{(\ell),1}_{n_\ell(v_1)-1}$. Since this ordering is independent of the ancestral path of $v_2$ (hence of the color of $v_2$), it follows that
$$
\lim_{\ell \to \infty}\P\bigl(v_1\in \tilde{B}^{(\ell)} \mid v_2\in \tilde{B}^{(\ell)}\bigr)=\lim_{\ell \to \infty}\P\bigl(v_1\in \tilde{B}^{(\ell)} \mid v_2\in \tilde{R}^{(\ell)}\bigr)
=
\frac12.
$$
This proves (ii) and completes the proof of the theorem.
\end{proof}

\subsection{Local limit of a tree with seeds at the root and the leaves (Theorem \ref{theo:main})}

We now consider the coloring process on the finite tree $\T^{(\ell)}$, with a blue seed at the root $o$ and a red seed at each leaf of $\T^{(\ell)}$. As usual, we let $B^{(\ell)}$ and $R^{(\ell)}$ denote the final sets of blue and red vertices, respectively.
We also consider the genealogical graph $\tilde{\mathcal{G}}^{o}$ on $\T$ with a single seed located at $o$. These processes are constructed on the same probability space using the same arrival times $(\tau_w)_{w\in \T}$ and the same random orderings $((\zi_j^w)_{j\ge 1})_{w\in \T}$ used to break ties.
\medskip

As already observed in the introduction, if a vertex $v$ is colored blue in $\T^{(\ell)}$, then it is also colored blue in all subsequent $\T^{(\ell')}$ for every $\ell'>\ell$. Hence,
$$
B^{(\ell)}\subset B^{(\ell')} \qquad \text{for every $\ell<\ell'$}
$$
and, as $\ell \to \infty$, the set $B^{(\ell)}$ converges a.s. to $\tilde{\mathcal{U}}_o$, the set of vertices of $\T$ whose ancestral path ends at the root $o$, \emph{i.e.}, the genealogical component of $\tilde{\mathcal{G}}^{o}$ containing the root. This proves the local-limit statement in Theorem \ref{theo:main}. Furthermore, recalling the notation
$$
m_\ell:=\min\{|v|:\ v\in R^{(\ell)}\}
$$
for the minimum height of the red vertices, we find that
\begin{equation*}
m_\infty := \lim_{\ell \to \infty} m_{\ell}
=
\min\{|v|:\ v\notin \tilde{\mathcal{U}}_o\}.
\end{equation*}
Now, by Theorem \ref{thm:GwithSeeds}, the random variable $m_\infty$ is almost surely finite. Moreover, Proposition \ref{prop:keyresult} implies that
$$
\P(m_\infty=1)>0.
$$
This proves statement \eqref{eq:min} of Theorem \ref{theo:main}.
\medskip

We now turn to the study of
$$
M_\ell:=\max\{|v|:\ v\in B^{(\ell)}\}.
$$
Again, we have
$$
M_\infty := \lim_{\ell\to\infty} M_\ell = \max\{|v|:\ v\in \tilde{\mathcal{U}}_o\},
$$
and, by Item 2(b) of Theorem \ref{thm:GwithSeeds}, the component $\tilde{\mathcal{U}}_o$ is almost surely infinite. Consequently, $M_\infty=\infty$ a.s. This completes the part of Theorem \ref{theo:main} which follows from local convergence and from the infinite-volume genealogical picture.
It remains only to prove the last part of Theorem \ref{theo:main}, namely that, with high probability, the blue vertices reach the leaves of $\T^{(\ell)}$ as $\ell\to\infty$, \emph{i.e.},
\begin{equation}\label{eq:touchleaves}
\lim_{\ell\to\infty}\P(M_\ell=\ell-1)=1,
\end{equation}
which is exactly the estimate \eqref{eq:max}, restated here for convenience. This estimate is of a different nature: it concerns the behavior of the coloring near the boundary of $\T^{(\ell)}$, which escapes to infinity, and therefore cannot be read from the convergence of any fixed neighborhood of the root.

\section{The blue vertices reach the leaves}\label{sec:max}

This last section is devoted solely to the proof of \eqref{eq:touchleaves}, and hence completes the proof of Theorem \ref{theo:main}. It is somewhat standalone: rather than reconstructing colors through backward genealogical explorations, as in the previous sections, we follow the evolution of the coloring process forward in time. 
\medskip 

The proof uses a recursive argument on finite descendant subtrees and a coupling with a supercritical Galton--Watson process. For this reason, it is convenient to work with a generic finite rooted $K$-ary tree $\Tf$ instead of $\T^{(\ell)}$. The only discrepancy between the two trees is at the global root, which has $K+1$ children in $\T^{(\ell)}$ and only $K$ children in $\Tf$; this first-generation difference will play no role, so we will work with $\Tf$ from now on to avoid having to distinguish the case of the root vertex.
\medskip

\noindent We use the following notation throughout the proof.
\begin{itemize}
\item $\Tf$ denotes a finite rooted $K$-ary tree, with root $o$ and height $|\Tf|$. We write $\partial \Tf$ for its set of leaves, and $|v|$ for the height of a vertex $v\in \Tf$.

\item For a vertex $v\in \Tf$, we denote by $\Tf_v$ the subtree of $\Tf$ rooted at $v$, consisting of all its descendants. Thus $\Tf_v$ is itself a rooted $K$-ary tree of height $|\Tf|-|v|$. Whenever this notation is applied to a rooted subtree, heights are understood relative to the root of that subtree.

\item For two disjoint subsets $B_0,R_0\subset \Tf$, we consider the coloring process on $\Tf$ with blue seeds in $B_0$ and red seeds in $R_0$. The vertices in $B_0\cup R_0$ have arrival time $0$, while the variables $(\tau_u)_{u\notin B_0\cup R_0}$ are i.i.d.\ exponential random variables with parameter $1$. For $t \ge 0$, we denote by $B_t$ (resp.\ $R_t$) the set of vertices colored blue (resp.\ red) up to time $t$.

\item Finally, $X^{\Tf}_1,X^{\Tf}_2,\ldots$ denote the vertices of $\Tf\setminus\{o\}$ whose heights are strictly smaller than $|\Tf|/2$, ordered according to their coloring times in increasing order.
\end{itemize}
Note that, once $v$ has been colored, it separates the subtree $\Tf_v$ from the rest of the tree: conditionally on the state $(B_{\tau_v}\cap \Tf_v,R_{\tau_v}\cap \Tf_v)$, the evolution of the coloring inside $\Tf_v$ after time $\tau_v$ depends only on the clocks $(\tau_w)_{w\in \Tf_v}$ and on the increasing orderings $((\zi_j^w)_{j\ge 1})_{w\in \Tf_v}$, and is therefore independent of the exterior.

\medskip

The event $\mathcal{A}_i$ defined below plays the role of a ``good event'' for the recursive step: it says that $X_i^{\Tf}$ becomes a blue vertex near the middle of the tree, while the part of its descendant subtree that has already been colored remains confined to a bounded layer near the leaves, which enables the recursion to be applied in $\Tf_{X_i^{\Tf}}$. More precisely, for $C,H\ge 0$ and $i\ge 1$, we define the event $\mathcal{A}_i:=\mathcal{A}_i(\Tf,C,H,B_0,R_0)$ by
\begin{equation}\label{eq:defai}
\begin{aligned}
\mathcal{A}_i := \Bigl\{&
X^{\Tf}_i \text{ is colored blue},\\
& |\Tf|/2 - H < |X^{\Tf}_i| < |\Tf|/2,\\
& \text{at time }(\tau_{X^{\Tf}_i})^{-},\ 
\Tf_{X^{\Tf}_i} \text{ contains at most } C \text{ colored non-leaf vertices,}\\
& \text{and all these vertices are at distance at most } 2H \text{ from the leaves}
\Bigr\}.
\end{aligned}
\end{equation}

\subsection{Coupling with a supercritical Galton--Watson process}

The next proposition is the key ingredient in the proof. It states that with high probability one can find two disjoint descendant subtrees in which the good event $\mathcal{A}_i$ occurs. Thanks to the independence observation mentioned previously, the coloring can then be restarted in these subtrees, which will allow us to compare the growth of the blue region with a supercritical Galton--Watson process.

\begin{prop}\label{prop:main}
Recall the definition of the events $\mathcal{A}_i$ given in \eqref{eq:defai}.
Let $\eta>0$. There exist $H,C,n_0\ge 0$ such that, for all $|\Tf|\ge n_0$, and for any choice of vertices $w_1,\ldots,w_C$ of height at least $|\Tf|-2H$, starting from the initial blue seed $B_0=\{o\}$ and red seeds $R_0=\partial \Tf \cup\{w_1,\ldots,w_C\}$, we have
\begin{equation}\label{eq1}
\P\bigl(\exists i<j,\ \Tf_{X^{\Tf}_i}\cap \Tf_{X^{\Tf}_j}=\emptyset \ \text{ and }\ \mathcal{A}_i\cap \mathcal{A}_j\bigr)\ge 1-\eta.
\end{equation}
\end{prop}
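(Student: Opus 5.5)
The approach is a forward-in-time analysis of the coloring on $\Tf$ in the time window where the middle layer of the tree is colored. Write $n:=|\Tf|$ and work with exponential clocks. By time $t$, the colored non-seed vertices at height about $k$ number roughly $K^k t$. A vertex at height $k<n/2$ with $K^{k}\tau\approx 1$, i.e. $\tau\approx K^{-k}$, sees an environment of the following kind: its own descendant subtree $\Tf_v$ down to height about $n-2H$ is still essentially empty, while the leaves and the seeds $w_1,\dots,w_C$ lie at distance about $n-k>n/2$. Since $n-k>k$, the root is closer than the leaves, so such a vertex has a genuine chance to inherit blue.

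\textbf{Step 1: the bad red seeds and the colored vertices near the leaves.} Fix $H$ large. I would show that, for $v$ at height in $(n/2-H,n/2)$ and up to time $\tau_v$, the set of colored non-leaf vertices of $\Tf_v$ lies within distance $2H$ of the leaves and has size at most $C$, with probability close to one. The key point is that vertices of $\Tf_v$ near the leaves are colored quickly, at times of order $K^{-(n-2H)}$ at depth $n-2H$, and inherit red from the nearby leaves. In contrast, vertices of $\Tf_v$ at intermediate depth have typical arrival times much larger than $\tau_v\approx K^{-n/2}$ when $v$ is one of the first colored vertices at its height.

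To count the points of $\Tf_v$ at height $\ge n-2H$ with $\tau<\tau_v$, I would use a first-moment estimate: their expected number is $K^{n-|v|}\tau_v$. For this to be $O(C)$ one needs $\tau_v\lesssim K^{|v|-n}$. This forces me to choose, instead of the very first vertices at height $\approx n/2$, vertices $v$ colored early enough. The exponential growth gives many candidates at height $n/2-H$ whose clocks satisfy $\tau_v\le K^{-n/2-H}$. This is the point to balance against the presence of already colored ancestors, so that $v$ can still find blue at distance $<$ the distance to red.

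\textbf{Step 2: producing blue vertices.} I would mimic the argument of \eqref{eq:theo3_2} and Proposition~\ref{prop:keyresult}. Consider the ancestral exploration of a candidate $v$, following Lemma~\ref{lemm:AcapB}, and view it in reverse: with probability bounded below by some $p>0$ independent of $n$, the path from $v$ climbs directly to the root. The jumps $R$ are increasing and the exploration stays inside the ball $B(o,n/2)$, away from the red seeds at height $\ge n-2H$. This gives that $v$ is blue with probability $\ge p$.

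\textbf{Step 3: two disjoint successes.} The decisive issue is obtaining two successes in disjoint subtrees with probability $1-\eta$, not merely one with probability $p$. For this I would take $L$ candidate vertices in distinct subtrees $\Tf_{u_1},\dots,\Tf_{u_L}$, where $u_1,\dots,u_L$ are at a fixed small height $h$. The events "the candidate in $\Tf_{u_j}$ satisfies the conditions of $\mathcal{A}$" are approximately independent for different $j$: as in the proof of Items~1 and~4, they depend mostly on the clocks inside $\Tf_{u_j}$, plus the common blue seed at $o$.

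The main obstacle I expect is making this independence rigorous, since the candidate ancestral paths all pass near the root. I would handle it by conditioning on the clocks in $B(o,h)$ and applying the sector-avoidance estimate of Proposition~\ref{prop:keyresult} inside each $\Tf_{u_j}$, which yields independent success events with probability $\ge p'$. Choosing $L$ such that $\P(\mathrm{Bin}(L,p')<2)\le\eta/2$, then $H$ and $C$, and finally $n_0$, would give \eqref{eq1}.
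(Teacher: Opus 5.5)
\textbf{There is a genuine gap in Steps 2--3.} You assume a success probability $p$ (then $p'$) that is bounded below independently of $L$. This is neither justified nor true.

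\emph{What success requires.} For $|v|<|\Tf|/2$, the mechanism that makes $v$ blue is that at time $(\tau_v)^-$ no vertex of $B(v,|v|)$ other than $o$ is colored, so that $o$ is the parent of $v$. This ball does not stay inside $B(o,|\Tf|/2)$, contrary to what you write: it reaches height $2|v|$. Its part at heights $\ge|\Tf|/2$ carries of order $K^{|v|}$ independent clocks. So, given $\tau_v$, the event has probability about $\exp(-c\,\tau_vK^{|v|})$, which is bounded below only when $\tau_v\lesssim K^{-|v|}$.

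\emph{The results you cite do not apply.} Lemma~\ref{lemm:AcapB} and Proposition~\ref{prop:keyresult} concern seedless ancestral paths on $\T$ that run \emph{away} from their starting point with growing jumps. Reversing such a path says nothing about whether the exploration ball of $v$ meets $o$ before any earlier clock. The sector-avoidance estimate is likewise irrelevant to the independence you need.

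\emph{Why fixing $p'$ fails.} Take your scheme with $L\asymp K^h$ subtrees $\Tf_{u_j}$ rooted at height $h$. The first upper-half vertex of $\Tf_{u_j}$ rings at time of order $K^{h-|\Tf|/2}$. It lies at height $|\Tf|/2-a$ with probability of order $K^{-a}$. Its success probability is therefore of order $\sum_a K^{-a}(1+K^{h-a})^{-1}\asymp hK^{-h}$. Thus $p'\to0$ as $L$ grows, and you cannot first fix $p'$ and then choose $L$ with $\P(\mathrm{Bin}(L,p')<2)\le\eta/2$.

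\textbf{What actually saves the argument.} The expected number of successes is $Lp'\asymp h\asymp\log L$, which still diverges. This logarithmic divergence is the heart of the proof, and your proposal has no trace of it.

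\emph{The paper's route.} It orders the upper-half vertices by ringing time; the $i$-th one, $X_i$, succeeds with probability of order $1/i$. Conditionally on the upper-half clocks and on $\mathcal{E}_1\cap\mathcal{E}_2$, the regions $B_{\ge|\Tf|/2}(X_i,|X_i|)\cup\Tf_{X_i}$ are disjoint. Hence the failure events are independent, and the product of failure probabilities tends to $0$ because of a harmonic-type divergence (Lemma~\ref{lem:technique}). A second window $(N,N']$, together with the disjointness of the $\Tf_{X_i}$ for $i\le N'$, then yields two good vertices.

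\emph{Repairing your version.} Once you prove the lower bound $p'\gtrsim hK^{-h}$ and handle the clocks near the root by conditioning, your subtree decomposition could plausibly be completed along similar lines.

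\textbf{Two smaller errors.}
\begin{itemize}
\item In Step~1 there are typically no vertices at height $|\Tf|/2-H$ with $\tau_v\le K^{-|\Tf|/2-H}$; their expected number is $K^{-2H}$. That restriction is unnecessary anyway, since the first-moment count $K^{|\Tf|-|v|}\tau_v$ is absorbed by choosing $C$ large after $H$.
\item The seeds $w_k$ need not be at distance $>|\Tf|/2$ from $v$. When $v$ is an ancestor of some $w_k$, the distance can be $|\Tf|-2H-|v|<|v|$, which would steal the parent from the root. This case must be excluded, as the paper does with $\mathcal{E}_3$.
\end{itemize}
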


\noindent See Figure \ref{fig:Ai} for an illustration of the initial condition and of the event $\mathcal{A}_1$.

\begin{figure}
    \centering
\includegraphics[width=11cm]{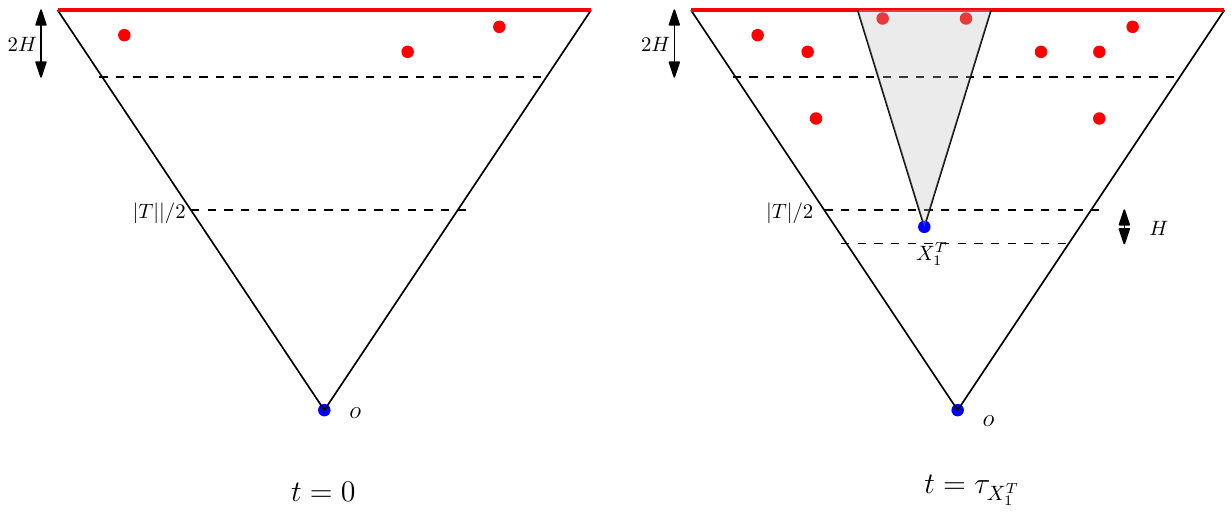}
\caption{\small{Illustration of the initial condition in Proposition \ref{prop:main} and of the event $\mathcal{A}_1$, with $C=3$. If $\mathcal{A}_1$ occurs, then at time $t=\tau_{X_1^{\Tf}}$, at most $C$ colored non-leaf vertices lie in the subtree $\Tf_{X_1^{\Tf}}$, and they are all close (at distance at most $2 H$) from its leaves.}}
\label{fig:Ai}
\end{figure}
  
\medskip

We first explain how Proposition \ref{prop:main} implies \eqref{eq:touchleaves} and hence completes the proof of Theorem \ref{theo:main}, before proving the proposition itself. 

\begin{proof}[Proof of \eqref{eq:touchleaves} using Proposition \ref{prop:main}]
Set
$$
\mathcal{C}_{i,j}
:=
\mathcal{C}_{i,j}(\Tf,C,H,B_0,R_0)
:=
\bigl\{\Tf_{X^{\Tf}_i}\cap \Tf_{X^{\Tf}_j}=\emptyset \ \text{and}\ \mathcal{A}_i\cap \mathcal{A}_j\bigr\}.
$$
Proposition \ref{prop:main} asserts that, for suitable choices of $(\Tf,C,H,B_0,R_0)$, with high probability there exist indices $i<j$ such that $\mathcal{C}_{i,j}$ occurs. 
\medskip

Fix $\eta\in(0,1/4)$ and choose $H,C,n_0\ge 0$ such that \eqref{eq1} holds. By possibly increasing $n_0$, we may assume that $C<K^{n_0-2H-1}$ and $n_0/2-2H>2H$. Consider the following process $(\mathcal{V}_l)_{l\ge 0}$ taking values in the subsets of $\Tf$:
\begin{itemize}
\item $\mathcal{V}_0=\{o\}$. If there exist $i<j$ such that $\mathcal{C}_{i,j}(\Tf,C,H,\{o\},\partial \Tf)$ occurs, we set $\mathcal{V}_1:=\{X^{\Tf}_i,X^{\Tf}_j\}$ and call $X^{\Tf}_i,X^{\Tf}_j$ the children of $o$. Otherwise, we set $\mathcal{V}_1=\emptyset$.

By definition of the event $\mathcal{A}_i$, for any vertex $v\in \mathcal{V}_1$, at time $(\tau_v)^-$, the subtree $\Tf_v$ contains at most $C$ colored vertices that are not leaves and have height at least $|\Tf_v|-2H$. Note that these vertices must be red, since they are at distance at most $2H$ from the leaves and at distance at least $|\Tf|/2-2H\ge n_0-2H>2H$ from any blue vertex. Hence $B_{\tau_v}\cap \Tf_v=\{v\}$.
Moreover, for $v_1\neq v_2$ in $\mathcal{V}_1$, we have $\Tf_{v_1}\cap \Tf_{v_2}=\emptyset$. Thus, conditionally on $(B_{\tau_v},R_{\tau_v})_{v\in \mathcal{V}_1}$, the processes
$\bigl((B_t\cap \Tf_v)_{t\ge \tau_v}, (R_t\cap \Tf_v)_{t\ge \tau_v}\bigr), \quad v\in \mathcal{V}_1$
are independent.

\item Assume that $\mathcal{V}_l=\{U_1^{(l)},\ldots,U_{\sharp\mathcal{V}_l}^{(l)}\}$. For each $v\in \mathcal{V}_l$, consider the coloring process on $\Tf_v$ for $t\ge \tau_v$. By induction, these processes are independent for $v\in \mathcal{V}_l$. Each of them starts from an initial configuration where only the root $v$ is blue, while the red vertices are the leaves together with at most $C$ additional vertices, each located at distance at most $2H$ from the leaves.

If there exist $i<j$ such that $\mathcal{C}_{i,j}(\Tf_v,C,H,\{v\},R_{\tau_v}\cap \Tf_v)$ occurs, we declare $\{X^{\Tf_v}_i,X^{\Tf_v}_j\}$ to be the children of $v$. We then define $\mathcal{V}_{l+1}$ as the union of the children of all vertices $v\in \mathcal{V}_l$.
\end{itemize}

By induction, we have that if $v\in \mathcal{V}_l$, then
$$
\frac{|\Tf|}{2^l}\le |\Tf_v|\le \frac{|\Tf|}{2^l}+2H.
$$
Let $L\in \mathbb{N}$ be such that
$
\frac{|\Tf|}{2^L}\ge n_0 > \frac{|\Tf|}{2^{L+1}}
$
that is,
$$
L:=\left\lfloor \frac{\log(|\Tf|/n_0)}{\log 2} \right\rfloor.
$$
Thus, for every $v\in \mathcal{V}_L$, we have
$$
n_0 \le |\Tf_v| \le 2n_0 + 2H.
$$

Using Proposition \ref{prop:main}, we see that $(\sharp\mathcal{V}_l)_{l\le L}$ stochastically dominates a Galton--Watson process $(Z_l)_{l\le L}$ starting from $Z_0=1$ with offspring distribution
$$
\mu(0)=\eta, \qquad \mu(2)=1-\eta.
$$
Since $\eta<1/4$, this Galton--Watson process is supercritical. It survives with probability $(1-2\eta)/(1-\eta)$, and for $l$ large enough,
$$
\P\bigl(Z_l>(2(1-2\eta))^l \mid Z_l>0\bigr)\ge 1-\eta.
$$
Hence, if $|\Tf|$ is large enough, we obtain
$$
\P\bigl(\sharp\mathcal{V}_L\ge (2(1-2\eta))^L\bigr)
\ge \P\bigl(Z_L>(2(1-2\eta))^L\bigr)
\ge 1-2\eta.
$$
Set $\beta:=\frac{\log(2(1-2\eta))}{\log 2} >0$ so that
$$
(2(1-2\eta))^L = 2^{\beta L}
\ge \left(2^{\frac{\log(|\Tf|/n_0)}{\log 2}-1}\right)^\beta
= \left(\frac{|\Tf|}{2n_0}\right)^\beta.
$$
Therefore,
$$
\P\left(\sharp\mathcal{V}_L\ge \left(\frac{|\Tf|}{2 n_0}\right)^\beta\right)\ge 1-2\eta.
$$

This shows that, with probability at least $1-2\eta$, there exist at least $(|\Tf|/2n_0)^\beta$ disjoint subtrees of height between $n_0$ and $2n_0+2H$ such that, in each of them, the coloring process colors the root $v$ blue and, at the time when it becomes blue, the subtree contains at most $C$ red vertices that are not leaves and have height at least $|\Tf_v|-2H$. Each of these subtrees then evolves independently.

Using the facts that $C<K^{n_0-2H-1}$ and $|\Tf_v|\ge n_0$, we obtain that the number of vertices in $\Tf_v$ at height $|\Tf_v|-2H$ is strictly larger than $C$. Therefore, at time $\tau_v$, there necessarily exists a path from the root $v$ to a leaf consisting entirely of uncolored vertices (except, of course, for the root and the leaf). Hence, the probability that no blue vertex reaches distance one from the leaves is strictly less than one. Since the sizes of these $(|\Tf|/2n_0)^\beta$ trees are uniformly bounded, we in fact obtain a uniform upper bound $\alpha(n_0,H,C)<1$ for the probability that a given tree has no blue vertex at distance one from the leaves.

Thus, writing $M_{\Tf}$ for the maximal height reached by a blue vertex in the coloring process on $\Tf$, we obtain
$$
\P(M_{\Tf}<|\Tf|-1)\le 2\eta+ \alpha(n_0,H,C)^{(|\Tf|/2n_0)^\beta},
$$
which can be made arbitrarily small as $|\Tf|\to\infty$, provided that $\eta$ is chosen sufficiently small.

\end{proof}

\begin{remark} 
    The proof that the blue vertices reach the leaves presented here is rather intricate, and the definition of the event $\mathcal{A}_i$ used to run the recursion may seem overcomplicated at first, but this complexity is necessary. For instance, requiring that the whole subtree $\Tf_{X_i^{\Tf}}$ be uncolored at time $(\tau_{X_i^{\Tf}})^-$ would be too restrictive and prevent the recursion from being applied. Indeed, after one step of the construction, the descendant subtrees naturally inherit a bounded number of red vertices close to their leaves. The event $\mathcal{A}_i$ allows precisely this boundary layer and keeps the class of admissible initial configurations stable under the recursion.
\end{remark}

\subsection{Existence of a blue vertex at height around $|\Tf|/2$}

In order to prove Proposition \ref{prop:main}, we first establish a weaker version by showing that there exists $N$ such that, with high probability, one of the first $N$ non-root vertices of height smaller than $|\Tf|/2$ to be colored is colored blue and has an almost uncolored subtree in front of it. This is the content of Proposition \ref{prop:unsommetvide} below, which will later be strengthened to Proposition \ref{prop:main}.

\begin{prop}\label{prop:unsommetvide} Let $\eta>0$. Then, there exist $H,C,N,n_0\ge 0$ such that, if $|\Tf|\ge n_0$, then for any choice of vertices $w_1,\ldots,w_C$ of height greater than or equal to $|\Tf|-2H$, starting from $B_0=\{o\}$ and $R_0=\partial \Tf \cup\{w_1,\ldots,w_C\}$, we have
\begin{equation*}
 \P(\forall 1\le i\le N,\; \mathcal{A}_i^c)\le \eta.
\end{equation*}
\end{prop}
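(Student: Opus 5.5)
We follow the coloring forward in time on the ``middle band''. Let $h:=\lfloor|\Tf|/2\rfloor$ and set $H$ large, to be fixed. The vertices $X^{\Tf}_1,X^{\Tf}_2,\dots$ are the non-root vertices of height $<|\Tf|/2$ in increasing order of arrival. Fix $N$. The number of such vertices is of order $K^{h}$, while the band $\{|\Tf|/2-H<|v|<|\Tf|/2\}$ contains a proportion $1-K^{-H+1}$ of them. The first $N$ arrivals among these vertices are therefore, with probability close to $1$, all in the band. Moreover, they are pairwise far apart (at distance comparable to $|\Tf|$) and have arbitrarily small arrival times, of order $N K^{-h}$.

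For a given $X_i$ in the band, arriving at time $t_i$, we then control three things.

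\emph{Colored vertices below $X_i$.} By the time $t_i$, only $O(N)$ vertices of height $<|\Tf|/2$ are colored. The colored vertices of $\Tf_{X_i}$ are vertices $w$ arriving before $t_i$. Such a vertex is at distance $<2H$ from the leaves with high probability, because $\Tf_{X_i}$ has $\approx K^{|\Tf|-|X_i|}$ vertices, a proportion $K^{-2H}$ of which lie above height $|\Tf|-2H$. The expected number of colored vertices of $\Tf_{X_i}$ with arrival time $\le t_i\approx N K^{-h}$ is of order $N K^{|\Tf|-|X_i|-h}\le N K^{H}$, with the bulk near the leaves. By Markov's inequality, at most $C=C(N,H,\eta)$ such vertices are colored, and with high probability none is at distance $>2H$ from the leaves. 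The red seeds $w_1,\dots,w_C$ only add at most $C$ more, all within $2H$ of the leaves by hypothesis, so we replace $C$ by $2C$.

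\emph{Color of $X_i$.} At time $t_i$, the closest colored vertex to $X_i$ is either the root $o$ (distance $|X_i|\approx|\Tf|/2$), a leaf or red seed (distance $\ge |\Tf|-|X_i|>|\Tf|/2$), or a previously colored vertex. With positive probability $p>0$, uniformly in $|\Tf|$, no earlier colored vertex lies within distance $|X_i|$. This is the crux. Conditionally on the first $N$ arrivals being spread out, the number of vertices colored before $t_i$ in the ball $B(X_i,|X_i|)$ is controlled by the same counting as above: the ball has $\approx K^{|X_i|}\approx K^{h}$ vertices and $t_i\approx iK^{-h}$. So with probability bounded below (depending on $i\le N$ only), $o$ is strictly the closest colored vertex, and $X_i$ is blue. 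The strictness uses $|X_i|<|\Tf|/2<|\Tf|-|X_i|$.

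\emph{Combining over $i$.} Finally we show that the events ``$X_i$ is blue'' for $i\le N$ are not too correlated. A clean way is to condition on the arrival ranks and positions. For distinct $i$ the relevant balls overlap, but the earlier $X_{i'}$ themselves are the main obstruction. Instead, we use a geometric sequence: the probability that none of $X_1,\dots,X_N$ is blue is at most $(1-p)^{N}+o(1)$. To prove this, expose the arrivals in increasing time order and bound, for each $i$, the conditional probability given the past that $X_i$ becomes blue from below by $p$. The configuration at time $t_i^-$ has at most $O(N)$ colored vertices outside the leaf layer, and a uniform random vertex of the band is, with probability $\ge p$, at distance $>|X_i|$ from all of them. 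This holds since a ball of radius $\approx|\Tf|/2$ around a band vertex captures a given far vertex with probability bounded away from $1$, as the tree branches. Choosing $N$ with $(1-p)^N<\eta/3$, then $H$, then $C$, then $n_0$, gives the claim.

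The main obstacle is the uniform lower bound $p$ on the conditional probability that $X_i$ is blue given the history. This needs a careful tree-geometric estimate showing that previously colored non-leaf vertices, which lie either near the leaves or are among $X_1,\dots,X_{i-1}$ near height $|\Tf|/2$, are avoided by the ball $B(X_i,|X_i|-1)$ with positive probability when $X_i$ is uniform in the band.
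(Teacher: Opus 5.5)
Your argument breaks at the step you yourself flag as the crux. The success probability of the $i$-th candidate is \emph{not} bounded below uniformly in $i$; it decays like $1/i$.

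The $i$-th candidate rings at time $\tau_{X_i}\asymp iK^{-|\Tf|/2}$. Suppose $X_i$ sits $j$ levels below the top of the band. Then the ball $B(X_i,|X_i|)$ contains $\asymp K^{|\Tf|/2-j}$ vertices of height $\ge|\Tf|/2$, both inside $\Tf_{X_i}$ and in the subtrees hanging off the ancestors of $X_i$. About $iK^{-j}$ of their clocks have rung by time $\tau_{X_i}$. These vertices are typically red, since the leaves are at least as close to them as the root, and any one of them turns $X_i$ red. Hence $X_i$ can only be blue if $j\gtrsim\log_K i$, which has probability $\asymp 1/i$.

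Your own count already shows this. A ball of $\approx K^h$ vertices at time $\approx iK^{-h}$ carries $\approx i$ rung clocks, so for $X_i$ near the top of the band the clean-ball probability is $e^{-\Theta(i)}$. It also makes your final choice circular: your $p$ ``depends on $i\le N$'', so $p=p(N)\asymp 1/N$, and $(1-p(N))^N$ stays bounded away from $0$ however large $N$ is. For the same reason, the claim that only $O(N)$ vertices outside the leaf layer are colored at time $\tau_{X_i}^-$ is false. There are $\asymp iK^{j'}$ colored vertices at each height $|\Tf|/2+j'$, and these, not the earlier $X_{i'}$, are the real obstruction.

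What saves the statement is that the per-candidate probabilities, although they vanish, are not summable. Indeed $\sum_i\sum_j K^{-j}e^{-ciK^{-j}}=\infty$, since each depth $j$ contributes a constant. The paper uses this as follows:
\begin{itemize}
\item It conditions on $\mathcal H=\sigma(\tau_v:|v|<|\Tf|/2)$.
\item On $\mathcal E_1\cap\mathcal E_2$ the regions $B_{\ge|\Tf|/2}(X_i,|X_i|)\cup\Tf_{X_i}$ are pairwise disjoint, so the failure events $\mathcal O_i\cup\mathcal F_i$ are conditionally independent.
\item Each conditional success probability is at least $e^{-(K+1)K^{|X_i|}\tau_i}\,q_{A,C,|\Tf|}$ on $\{\tau_i\le AK^{-|\Tf|/2}\}$.
\item After averaging over $|X_i|$, Lemma~\ref{lem:technique} shows that the resulting product tends to $0$ as $N,L\to\infty$, precisely because of this harmonic-type divergence.
\end{itemize}
To repair your argument you need the heterogeneous bound $p_i\gtrsim c/i$ together with $\prod_{i\le N}(1-c/i)\to0$. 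You also need a conditioning scheme, such as the paper's, under which the candidates' failure events are genuinely independent. Your proposed sequential bound ``given the past'' is not established.
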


\begin{proof}[Proof of Proposition \ref{prop:unsommetvide}]

For $v\in \Tf$ and $r,h\ge 0$, we denote
$$
B(v,r):=\{w\in \Tf:\ d(v,w)\le r\}
\qquad \text{and} \qquad
B_{\ge h}(v,r):=\{w\in \Tf:\ d(v,w)\le r \ \text{and}\ |w|\ge h\},
$$
the ball centered at $v$ of radius $r$ and the set of vertices in this ball whose height is at least $h$, respectively.
For $v_1,v_2\in \Tf$, we denote by $v_1\wedge v_2$ their most recent common ancestor.
Recall that $R_0=\partial \Tf \cup\{w_1,\ldots,w_C\}$ denotes the set of initial red vertices, and that $X^{\Tf}_1,X^{\Tf}_2,\ldots$ enumerates the vertices of $\Tf\setminus\{o\}$ whose heights are strictly smaller than $|\Tf|/2$, ordered according to their coloring times. Since $\Tf$ is fixed throughout this section, we will simplify the notation and write $X_1,X_2,\ldots$ instead of $X^{\Tf}_1,X^{\Tf}_2,\ldots$.
We now introduce the following events.

\begin{eqnarray*}
\mathcal{E}_1=\mathcal{E}_1(\Tf,N,H,C,R_0) &:=& \{\forall i\neq j \le N, |X_i\wedge X_j| < |\Tf|/8\},\\
\mathcal{E}_2=\mathcal{E}_2(\Tf,N,H,C,R_0) &:=& \{\forall i \le N, |X_i|> |\Tf|/2-H\},\\
\mathcal{E}_3=\mathcal{E}_3(\Tf,N,H,C,R_0) &:=& \{\forall i \le N, \forall k \le C, X_i\notin B(w_k,|\Tf|/2+H)\},\\
\mathcal{E}_4=\mathcal{E}_4(\Tf,N,H,C,R_0) &:=&\{\forall i \le N, \mathcal{A}_i^c\}\cap  \mathcal{E}_1 \cap  \mathcal{E}_2 \cap  \mathcal{E}_3.
\end{eqnarray*}
Thus, we have
$$
\P\bigl(\forall i \le N,\; \mathcal{A}_i^c\bigr)
\le \P(\mathcal{E}_1^c)+\P(\mathcal{E}_2^c)+\P(\mathcal{E}_3^c)+\P(\mathcal{E}_4).
$$
The first three terms on the right-hand side are easy to bound.

Indeed, to bound $\P(\mathcal{E}_1^c)$, observe that $X_i$ and $X_j$ are distributed as two distinct vertices chosen uniformly at random from the complete $K$-ary tree of height $\lceil |\Tf|/2-1 \rceil$, excluding the root. It is easy to see that the height of the most recent common ancestor of such vertices is stochastically dominated by a geometric random variable with parameter $(K-1)/K$. Hence, we obtain
\begin{equation}\label{eq:E1}
\P(\mathcal{E}_1^c)
\le N^2 \P\bigl( |X_1 \wedge X_2| \ge |\Tf|/8 \bigr)
\le N^2 \frac{1}{K^{|\Tf|/8}}.
\end{equation}

Concerning $\P(\mathcal{E}_2^c)$, and still using that $X_i$ is uniformly distributed on the complete $K$-ary tree of height $\lceil |\Tf|/2-1 \rceil$, excluding the root, together with a simple counting argument on the number of vertices of height smaller than $|\Tf|/2-H$, we obtain
\begin{equation}\label{eq:E2}
\P(\mathcal{E}_2^c)\le N K^{-H}.
\end{equation}

For $\P(\mathcal{E}_3^c)$, recall that by assumption, for any $k\le C$, $|w_k|\ge |\Tf|-2H$, whereas for any $i\ge 1$ we have $|X_i|\le \lceil |\Tf|/2-1 \rceil$. Let $z_k$ denote the ancestor of $w_k$ at height $\lceil |\Tf|/2-1 \rceil$. Then
$$
X_i\in B(w_k,|\Tf|/2+H)\ \Longrightarrow\ X_i\in B(z_k,3H).
$$
Hence,
\begin{equation}\label{eq:E3}
\P(\mathcal{E}_3^c)
\le CN\,\P\bigl(X_1\in B(z_1,3H)\bigr)
\le CN \frac{K^{3H}}{K^{|\Tf|/2-2}},
\end{equation}
again using that $X_1$ is uniformly distributed on the complete $K$-ary tree of height $\lceil |\Tf|/2-1 \rceil$ (excluding the root).
\medskip

It remains to upper bound $\P(\mathcal{E}_4)$, which requires more work. In the following, for a vertex $v$ and a time $t>0$, we say that the clock of $v$ has rung before time $t$ if $0<\tau_v<t$. We introduce the following two events:
\begin{itemize}
\item[(i)] $\mathcal{F}_i := \{\text{At time } (\tau_{X_i})^{-}, \text{ strictly more than } C \text{ clocks have rung in } \Tf_{X_i}\}$.
\item[(ii)] $\mathcal{O}_i := \{\text{A clock has rung in } B_{\ge |\Tf|/2}(X_i, |X_i|) \text{ before time } (\tau_{X_i})^{-}\}$.
\end{itemize}

\begin{lemma} Assume that $H< |\Tf|/4$. Then
   we have $$\mathcal{E}_4\subset \{\forall i\le N, \mathcal{O}_i \cup \mathcal{F}_i\}\cap \mathcal{E}_1\cap \mathcal{E}_2.$$
\end{lemma}

\begin{proof}

Recalling the definition of $\mathcal{A}_i$ given in \eqref{eq:defai}, we have
$
\mathcal{A}_i := \mathcal{X}_i \cap \mathcal{Y}_i \cap \mathcal{Z}_i,
$
where
\begin{eqnarray*}
\mathcal{X}_i &:=& \{|X_i|>|\Tf|/2-H\},\\
\mathcal{Y}_i &:=& \{\text{$X_i$ is colored blue}\},\\
\mathcal{Z}_i &:=& \Bigl\{\text{At time } (\tau_{X_i})^{-},\; \Tf_{X_i} \text{ contains at most } C \text{ colored vertices that are not leaves,}\\
&& \qquad\quad \text{and each of them is at distance at most } 2H \text{ from the leaves}\Bigr\}.
\end{eqnarray*}
We prove that
$$
\mathcal{F}_i^c \cap \mathcal{O}_i^c \cap \mathcal{E}_1 \cap \mathcal{E}_2 \cap \mathcal{E}_3 \subset \mathcal{A}_i,
$$
by showing that the event on the left-hand side implies $\mathcal{X}_i$, $\mathcal{Y}_i$, and $\mathcal{Z}_i$.

\medskip

\noindent We have $\mathcal{E}_2 \subset \mathcal{X}_i$. Assume now that $\mathcal{O}_i^c \cap \mathcal{E}_1 \cap \mathcal{E}_2 \cap \mathcal{E}_3$ occurs. Observe that for $i \neq j \le N$,
$$
d(X_i,X_j)
= |X_i| + |X_j| - 2|X_i \wedge X_j|
> |X_i| + \frac{|\Tf|}{2} - H - \frac{|\Tf|}{4}
> |X_i|.
$$
Hence, we have $X_j \notin B(X_i,|X_i|)$. By definition of the sequence $(X_j)_{j\le N}$, it follows that at time $(\tau_{X_i})^{-}$ no clock in $B(X_i,|X_i|)$ at height smaller than $|\Tf|/2$ has rung. Moreover, since $\mathcal{O}_i^c$ holds, no clock in $B(X_i,|X_i|)$ at height at least $|\Tf|/2$ has rung either. Therefore, no clock in $B(X_i,|X_i|)$ has rung before time $(\tau_{X_i})^{-}$. Using now $\mathcal{E}_3$ and the fact that $|X_i|<|\Tf|/2$, we deduce that $B(X_i,|X_i|)$ contains no vertex of $R_0$. Hence, at time $(\tau_{X_i})^{-}$, the ball $B(X_i,|X_i|)$ contains no red vertex but does contain the root, which is blue. Consequently, $X_i$ becomes blue, and we obtain
$$
\mathcal{O}_i^c \cap \mathcal{E}_1 \cap \mathcal{E}_2 \cap \mathcal{E}_3 \subset \mathcal{Y}_i.
$$

Moreover, on the event $\mathcal{O}_i^c \cap \mathcal{E}_1 \cap \mathcal{E}_2$, we have $|X_i|>|\Tf|/2-H$ and no clock has rung in $B(X_i,|X_i|)$ before time $\tau_{X_i}$. In particular, any clock that has rung in $\Tf_{X_i}$ before time $\tau_{X_i}$ must correspond to a vertex at height at least $2|X_i|\ge |\Tf|-2H$.
Hence, by definition of $\mathcal{F}_i$, on the event
$\mathcal{F}_i^c \cap \mathcal{O}_i^c \cap \mathcal{E}_1 \cap \mathcal{E}_2$,
at most $C$ vertices have rung in $\Tf_{X_i}$ before time $\tau_{X_i}$, and all of them lie at distance at most $2H$ from the leaves. Using additionally $\mathcal{E}_3$, which ensures that the initial red vertices in $\Tf_{X_i}$ are only leaves, we obtain
$$
\mathcal{F}_i^c \cap \mathcal{O}_i^c \cap \mathcal{E}_1 \cap \mathcal{E}_2 \cap \mathcal{E}_3
\subset \mathcal{Z}_i.
$$
Combining the previous observations, we deduce that
$$
\mathcal{F}_i^c \cap \mathcal{O}_i^c \cap \mathcal{E}_1 \cap \mathcal{E}_2 \cap \mathcal{E}_3
\subset \mathcal{A}_i,
$$
which implies
$$
\mathcal{A}_i^c \cap \mathcal{E}_1 \cap \mathcal{E}_2 \cap \mathcal{E}_3
\subset \mathcal{F}_i \cup \mathcal{O}_i.
$$
Hence,
$$
\mathcal{E}_4 \subset \Bigl\{\forall i \le N,\; \mathcal{F}_i \cup \mathcal{O}_i\Bigr\} \cap \mathcal{E}_1 \cap \mathcal{E}_2.
$$
This concludes the proof of the lemma.
\end{proof}

Let us now derive an upper bound on the probability of
$\{\forall i \le N,\; \mathcal{F}_i \cup \mathcal{O}_i\}\cap \mathcal{E}_1\cap \mathcal{E}_2$.
Recall that $\tau_v$ denotes the coloring time of a vertex $v$. We define the $\sigma$-algebra
$$
\mathcal{H}
:= \sigma\bigl(\{\tau_v:\ |v|<|\Tf|/2\}\bigr),
$$
so that $\mathcal{E}_1,\mathcal{E}_2,\mathcal{E}_3 \in \mathcal{H}$. For $i\le N$, we denote by
$\tau_i$ the coloring time of $X_i$, i.e. $\tau_i=\tau_{X_i}$. Note that $\tau_i$ is $\mathcal{H}$-measurable.
We observe that for any $i\le N$ and any $u\in B_{\ge |\Tf|/2}(X_i,|X_i|)$, we have
$$
2|u\wedge X_i|
= |u| + |X_i| - d(u,X_i)
\ge |u|
\ge \frac{|\Tf|}{2}.
$$
Moreover, if $u\in \Tf_{X_i}$, then $|u\wedge X_i|=|X_i|$. Hence, assuming $H<|\Tf|/4$, on the event $\mathcal{E}_2$ we obtain that for all
$u\in B_{\ge |\Tf|/2}(X_i,|X_i|)\cup \Tf_{X_i}$,
$$
|u\wedge X_i|>\frac{|\Tf|}{8}.
$$
In particular, on $\mathcal{E}_1 \cap \mathcal{E}_2$, for any $i\neq j \le N$, the sets
$B_{\ge |\Tf|/2}(X_i,|X_i|)\cup \Tf_{X_i}$ and
$B_{\ge |\Tf|/2}(X_j,|X_j|)\cup \Tf_{X_j}$
are disjoint. Therefore, conditionally on $\mathcal{H}$ and on $\mathcal{E}_1 \cap \mathcal{E}_2$, the events $\mathcal{F}_i \cup \mathcal{O}_i$, $i\le N$, are independent. Besides, we have
$$
\sharp B_{\ge |\Tf|/2}(X_i, |X_i|) \le (K+1)K^{|X_i|}
\qquad \text{and} \qquad
\sharp\Tf_{X_i} \le K^{|\Tf|/2+H+1}.
$$
Hence, conditionally on $\mathcal{H}$ and on the event $\mathcal{E}_1 \cap \mathcal{E}_2$, the time until the first clock rings in $B_{\ge |\Tf|/2}(X_i, |X_i|)$,
namely
\[
\min\{\tau_v:\ v\in B_{\ge |\Tf|/2}(X_i, |X_i|)\},
\]
is stochastically lower bounded by an exponential random variable with parameter $(K+1)K^{|X_i|}$. Moreover, the number of clocks that have rung before time $\tau_i$ in $\Tf_{X_i}$ is stochastically upper bounded by a binomial random variable with parameters $\bigl(K^{|\Tf|/2+H+1},\, 1-e^{-\tau_i}\bigr)$. Let $e_h$ denote an exponential random variable with parameter $(K+1)K^{h}$ and let $Y_t$ be a binomial random variable with parameters $\bigl(K^{|\Tf|/2+H+1},\, 1-e^{-t}\bigr)$. We obtain
\begin{eqnarray*}
\P(\mathcal{E}_4\mid \mathcal{H})&\le& 1_{\{\mathcal{E}_1\cap \mathcal{E}_2\}} \prod_{i=1}^N\P(e_{|X_i|}\le \tau_i \mbox{ or } Y_{\tau_i}\ge C \mid \mathcal{H})\\
&=& 1_{\{\mathcal{E}_1\cap \mathcal{E}_2\}} \prod_{i=1}^N\Big(1-\P(e_{|X_i|}> \tau_i \mid \mathcal{H})\P( Y_{\tau_i}<C \mid \tau_i)\Big).\end{eqnarray*}
We write, for a fixed $A>0$,
$$\P( Y_{\tau_i}<C \mid \tau_i)\ge 1_{\{\tau_i\le A K^{-|\Tf|/2}\}}\P(Y_{A K^{-|\Tf|/2}}<C).$$
We set $q_{A,C,|\Tf|}:=\P(Y_{A K^{-|\Tf|/2}}<C)$ so that 
\begin{eqnarray*}
\P(\mathcal{E}_4\mid \mathcal{H})&\le&  1_{\{\mathcal{E}_1\cap \mathcal{E}_2\}} \prod_{i=1}^N\left(1-\exp(-(K+1)K^{|X_i|}\tau_i)q_{A,C,|\Tf|}1_{\{\tau_i\le A K^{-|\Tf|/2}\}}\right).\end{eqnarray*}
Let $m$ denote the largest integer strictly smaller than $|\Tf|/2$, and let $\Tf_m^*$ be the set of vertices of $\Tf\setminus\{o\}$ whose height is at most $m$. We have
$$
\sharp\Tf_m^*=\frac{K(K^{m}-1)}{K-1}.
$$
Note that $X_1,\ldots,X_N$ are $N$ distinct vertices of $\Tf_m^*$ chosen uniformly at random. Moreover, the random variables $(\tau_i)_{i\le N}$ and $(X_i)_{i\le N}$ are independent, and $(\tau_i-\tau_{i-1})_{i\le N}$ are independent exponential random variables with parameters $\sharp\Tf_m^* - i$. We obtain, for any $i\ge 1$ and $h<m$,
$$
\P(|X_i|=m-h)
=
\frac{K^{m-h}}{\sharp\Tf_m^*}
=
\frac{(K-1)K^{m-h}}{K(K^m-1)}
\ge \frac{1}{2K^h}.
$$
Moreover, for any $h_i\ge 1$, we have
\begin{eqnarray*}
\P((|X_1|,\ldots,|X_N|)=(h_1,\ldots,h_N))&\le& \left(\frac{\sharp\Tf_m^*}{\sharp\Tf_m^* - N}\right)^N\prod_{i=1}^N\P(|X_i|=h_i)\\
&\le & \left(1-\frac{N}{K^{m}}\right)^{-N}\prod_{i=1}^N\P(|X_i|=h_i)\\
&\le & \left(1-\frac{N}{K^{|\Tf|/2-1}}\right)^{-N}\prod_{i=1}^N\P(|X_i|=h_i).
\end{eqnarray*}
Thus, we get 
\begin{eqnarray*}
\P(\mathcal{E}_4\mid (\tau_i)_{i\le N})&\le& \sum_{h_1,\ldots,h_N\ge 1} \P((|X_1|,\ldots,|X_N|)=(h_1,\ldots,h_N))\prod_{i=1}^N\left (1-e^{-\tau_i(K+1)K^{h_i}}q_{A,C,|\Tf|}1_{\{\tau_i\le A K^{-|\Tf|/2}\}}\right)\\
&\le &\left(1-\frac{N}{K^{|\Tf|/2-1}}\right)^{-N}\sum_{h_1,\ldots,h_N\ge 0} \prod_{i=1}^N\left (\P(|X_i|=h_i)(1-e^{-\tau_i(K+1)K^{h_i}}q_{A,C,|\Tf|}1_{\{\tau_i\le A K^{-|\Tf|/2}\}})\right)\\
&\le& \left(1-\frac{N}{K^{|\Tf|/2-1}}\right)^{-N}\prod_{i=1}^N \E\left(1-e^{-\tau_i(K+1)K^{|X_i|}}q_{A,C,|\Tf|}1_{\{\tau_i\le A K^{-|\Tf|/2}\}} \mid (\tau_i)_{i\le N}
\right)\\
&=&  \left(1-\frac{N}{K^{|\Tf|/2-1}}\right)^{-N}\prod_{i=1}^N \left(1-\E\left(e^{-\tau_i(K+1)K^{|X_i|}} \mid (\tau_i)_{i\le N}\right)q_{A,C,|\Tf|}1_{\{\tau_i\le A K^{-|\Tf|/2}\}}
\right)\\
&\le&  \left(1-\frac{N}{K^{|\Tf|/2-1}}\right)^{-N}\prod_{i=1}^N \left(1-\left(\sum_{h=0}^{L}\frac{1}{2K^h}e^{-\tau_i(K+1)K^{m-h}}\right) q_{A,C,|\Tf|}1_{\{\tau_i\le A K^{-|\Tf|/2}\}}
\right)
\end{eqnarray*}
valid for any $L<m$ (recall that $m$ is the largest integer strictly smaller than $|\Tf|/2$).
\medskip

We now use that the random variables $(\tau_i-\tau_{i-1})_{i\le N}$ are independent exponential random variables with parameters $\sharp\Tf_m^* - i$. If $i\le \sharp\Tf_m^*/2$, we have
$$\sharp\Tf_m^* - i\ge  \frac{\sharp\Tf_m^*}{2}\ge K^{m-1}\ge K^{|\Tf|/2-2}.$$
Thus, if $N\le K^m/2$, the sequence $(K^{|\Tf|/2-2}(\tau_i-\tau_{i-1}))_{i\le N}$ is stochastically smaller than a sequence $(\varepsilon_i)_{i\le N}$ of i.i.d.\ exponential random variables with parameter 1. Let $\sigma_i=\sum_{k=1}^i\varepsilon_k$.
We get 
\begin{eqnarray*}
\P(\mathcal{E}_4)
&\le&  \left(1-\frac{N}{K^{|\Tf|/2-1}}\right)^{-N}\E\left(\prod_{i=1}^N \left(1-\left(\sum_{h=0}^L\frac{1}{2K^h}e^{-\tau_i(K+1)K^{|\Tf|/2-h}}\right) q_{A,C,|\Tf|}1_{\{\tau_i\le A K^{-|\Tf|/2}\}}
\right)\right)\\
&\le& \left(1-\frac{N}{K^{|\Tf|/2-1}}\right)^{-N}\E\left(\prod_{i=1}^N \left(1-\left(\sum_{h=0}^L\frac{1}{2K^h}e^{-\sigma_i(K+1)K^{2-h}}\right) q_{A,C,|\Tf|}1_{\{\sigma_i\le A K^{-2}\}}
\right)\right).
\end{eqnarray*}
We now use the following lemma, whose proof is postponed until the end of this section.
\begin{lemma}\label{lem:technique}
    Let $\eta>0$. Then, there exist integers $N,L>0$ such that if $(\varepsilon_i)_{i\le N}$ are i.i.d.\ exponential random variables with parameter 1 and $\sigma_i=\sum_{k=1}^i\varepsilon_k$, we have 
    \begin{equation}\label{eq:technique}
        \E\left(\prod_{i=1}^N \left(1-\sum_{h=0}^L\frac{1}{2K^h}e^{-\sigma_i(K+1)K^{2-h}}\right) \right)\le \eta/5.
    \end{equation}    
\end{lemma}
We finish the proof of Proposition \ref{prop:unsommetvide} assuming that Lemma \ref{lem:technique} holds. Fix $\eta>0$ and choose $N,L$ such that \eqref{eq:technique} holds. Since for each $i\le N$, $1_{\{\sigma_i\le A K^{-2}\}}$ tends a.s. to 1 as $A$ tends to infinity, by the dominated convergence theorem, there exists $A$ large enough such that 
\begin{equation*}
        \E\left(\prod_{i=1}^N \left(1-1_{\{\sigma_i\le A K^{-2}\}}\sum_{h=0}^L\frac{1}{2K^h}e^{-\sigma_i(K+1)K^{2-h}}\right) \right)\le \eta/4.
    \end{equation*}
Using \eqref{eq:E2}, fix now $H$ such that 
\begin{equation*}
\P(\mathcal{E}_2^c)\le N\frac{1}{K^{H}}\le \eta/3.
\end{equation*}
Recall that 
$$q_{A,C,|\Tf|}:=\P(Y_{A K^{-|\Tf|/2}}<C) \mbox{ where } Y_{A K^{-|\Tf|/2}}\sim \mbox{Bin}(K^{|\Tf|/2+H+1}, 1-e^{-A K^{-|\Tf|/2}}).$$
Thus, we get that
$$1-q_{A,C,|\Tf|}=\P(Y_{A K^{-|\Tf|/2}}\ge C)\le \frac{\E(Y_{A K^{-|\Tf|/2}})}{C}\le \frac{A K^{H+1}}{C}.$$
This shows that, for fixed $A$ and $H$, we can find $C$ large enough such that $q_{A,C,|\Tf|}$ is close to 1. Using again the dominated convergence theorem, we deduce the existence of a $C$ such that 
$$\E\left(\prod_{i=1}^N \left(1-\left(\sum_{h=0}^L\frac{1}{2K^h}e^{-\sigma_i(K+1)K^{2-h}}\right) q_{A,C,|\Tf|}1_{\{\sigma_i\le A K^{-2}\}}
\right)\right)\le \eta/3,$$
and so, for this choice of $N,H,C$, we obtain $\P(\mathcal{E}_4)\le \eta/2$ if $|\Tf|$ is large enough. Combining this with \eqref{eq:E1} and \eqref{eq:E3}, we conclude that for this choice of $H,C,N$ and for $|\Tf|$ large enough,
\begin{equation*}
 \P(\forall 1\le i\le N,\; \mathcal{A}_i^c)\le \eta.
\end{equation*}
This is exactly the statement of Proposition \ref{prop:unsommetvide}.
\end{proof}

\begin{proof}[Proof of Lemma \ref{lem:technique}]
Let us note that $\sigma_i$ is the sum of $i$ i.i.d.\ exponential random variables; thus $\sigma_i/i$ tends a.s. to 1 as $i$ tends to infinity. Hence, we can find $D>0$ such that 
$$\P(\exists i\ge 1, K^2(K+1)\sigma_i\ge D i)\le \eta/10.$$
With this choice for $D$, we get 
$$ \E\left(\prod_{i=1}^N \left(1-\sum_{h=0}^L\frac{1}{2K^h}e^{-\sigma_i(K+1)K^{2-h}}\right) \right)\le \eta/10+ \prod_{i=1}^N \left(1-\sum_{h=0}^L\frac{1}{2K^h}e^{-DiK^{-h}}\right).$$
Define
\begin{equation*}
       \Pi_{L,N}:= \prod_{i=1}^N \left(1-\sum_{h=0}^L\frac{1}{2K^h}e^{-DiK^{-h}}\right)
    \end{equation*}
and 
\begin{equation*}
      \Pi_{\infty}:= \prod_{i=1}^\infty \left(1-\sum_{h=0}^\infty\frac{1}{2K^h}e^{-DiK^{-h}}\right).
\end{equation*}
Since $\Pi_{L,N}$ tends to $\Pi_\infty$ when $L,N$ tend to infinity, it only remains to show that $\Pi_{\infty}=0$, as this will imply that, for $L$ and $N$ large enough, $\Pi_{L,N}\le \eta/10$, as required. Indeed, we have
$$\log \Pi_\infty = \sum_{i=1}^\infty \log \left(1-\sum_{h=0}^\infty\frac{1}{2K^h}e^{-DiK^{-h}}\right)\le -\sum_{i=1}^\infty \sum_{h=0}^\infty\frac{1}{2K^h} e^{-DiK^{-h}},$$
and    
$$\sum_{h=0}^\infty \sum_{i=1}^\infty\frac{1}{2K^h}e^{-DiK^{-h}}=\sum_{h=0}^\infty \frac{1}{2K^h}\frac{e^{-DK^{-h}}}{1-e^{-DK^{-h}}}=\infty,$$
since $$\lim_{h\to \infty } \frac{1}{2K^h}\frac{e^{-DK^{-h}}}{1-e^{-DK^{-h}}}=\frac{1}{2D}.$$
This yields that $\Pi_\infty=0$, and so $\Pi_{L,N}$ tends to 0 as $N$ and $L$ tend to infinity.
\end{proof}

\subsection{Proof of Proposition \ref{prop:main}}
We first prove a two-window version of Proposition \ref{prop:unsommetvide}: with high probability, one can find a good vertex among the first $N$ candidates and another one among the candidates with indices between $N+1$ and $N'$. The final step, carried out afterwards, will be to show that these two good vertices may be chosen with disjoint descendant subtrees, as required in Proposition \ref{prop:main}.

\begin{prop}\label{prop:deuxcas} Let $\eta>0$. Then, there exist $H,C,N,N',n_0\ge 0$ such that, if $|\Tf|\ge n_0$, then for any choice of vertices $w_1,\ldots,w_C$ of height greater than or equal to $|\Tf|-2H$, starting from $B_0=\{o\}$ and $R_0=\partial \Tf \cup\{w_1,\ldots,w_C\}$, we have
\begin{equation}\label{eq:prop_point1}
 \P(\forall 1\le i\le N,\; \mathcal{A}_i^c)\le \eta.
\end{equation}
\begin{equation}\label{eq:prop_point2}
\P(\forall N<i\le N',\; \mathcal{A}_i^c)\le \eta.
\end{equation}
\end{prop}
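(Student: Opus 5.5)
Point \eqref{eq:prop_point1} is Proposition \ref{prop:unsommetvide} itself, so we fix $H,C,N$ (and a provisional $n_0$) given by that proposition for $\eta/2$. The task is then to show that the proof of Proposition \ref{prop:unsommetvide} can be rerun on the window of indices $N<i\le N'$, keeping $H$ and $C$ fixed and choosing $N'$ large. We reuse the decomposition $\P(\forall N<i\le N',\ \mathcal{A}_i^c)\le \P(\mathcal{E}_1'^c)+\P(\mathcal{E}_2'^c)+\P(\mathcal{E}_3'^c)+\P(\mathcal{E}_4')$. Here $\mathcal{E}_1',\mathcal{E}_2',\mathcal{E}_3'$ are the analogues of $\mathcal{E}_1,\mathcal{E}_2,\mathcal{E}_3$ with $i,j$ ranging over $\{1,\dots,N'\}$. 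We keep all $N'$ indices, not only the window, because the lemma showing that $X_i$ becomes blue uses that no earlier $X_j$ lies in $B(X_i,|X_i|)$.

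The bounds \eqref{eq:E1} and \eqref{eq:E3}, with $N'$ in place of $N$, still vanish as $|\Tf|\to\infty$. The difficulty is \eqref{eq:E2}: it becomes $N'K^{-H}$, and $H$ was already fixed before $N'$. We avoid this as follows. We only need $\mathcal{X}_i$ for the indices in the window. For all indices $j\le N'$ we use the weaker event $\{|X_j|>|\Tf|/4\}$, which suffices for the disjointness arguments. Under $H<|\Tf|/4$, its complement has probability at most $N'K^{-|\Tf|/4}$. Inside $\mathcal{E}_4'$, the requirement $|X_i|>|\Tf|/2-H$ for window indices $i$ is then simply part of $\mathcal{A}_i$. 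Concretely, $\mathcal{A}_i^c$ holds if either $|X_i|\le|\Tf|/2-H$, or $\mathcal{O}_i\cup\mathcal{F}_i$ holds. We absorb the first alternative into the factor $1-\sum_{h=0}^{L}\frac{1}{2K^h}(\cdots)$ by restricting the sum to $h<H$, that is, $L\le H-1$. This is harmless because $L$ is chosen from Lemma \ref{lem:technique} before $H$.

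The main step is the conditional product bound for $\P(\mathcal{E}_4')$ over $i\in(N,N']$. The key point is that the estimates $\tau_i\le AK^{-|\Tf|/2}$ and $q_{A,C,|\Tf|}\approx1$ must now hold at indices up to $N'$, so $A$ depends on $N'$, and then $C$ depends on $A$. This would break the order ``$C$ given by \eqref{eq:prop_point1} first''. We resolve it by choosing all parameters jointly rather than sequentially. First we pick $N$ and $L$ from Lemma \ref{lem:technique}. We then prove a shifted version of that lemma, replacing $\sigma_i$ by $\sigma_{N+i}$ for $i\le N'-N$. It follows from the same computation, since $\Pi_\infty=0$ comes from a divergent tail sum that does not depend on the first $N$ factors, once the bound $\sigma_{N+i}\le D(N+i)$ is used. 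This gives $N'$. Next we choose $A$ so that $\sigma_{N'}\le AK^{-2}$ with high probability, then $H>L$ with $N'K^{-H}$ small, then $C$ with $AK^{H+1}/C$ small, and finally $n_0$. Since \eqref{eq:prop_point1} only improves when $A$ and $C$ are increased (the same bound from the proof of Proposition \ref{prop:unsommetvide} applies), both statements hold for the common parameters.

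The main obstacle is this bookkeeping of parameter dependencies, in particular keeping \eqref{eq:E2} compatible with a large $N'$. The fallback is to require $\mathcal{E}_2$ only for the window indices, as described above.
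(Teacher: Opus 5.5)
Your proposal is correct and is essentially the paper's proof. The paper also keeps $\mathcal{E}'_1,\mathcal{E}'_2,\mathcal{E}'_3$ over all $N'$ indices, bounds the second window by the same product over $N<i\le N'$, and chooses $N,N',L$ jointly before taking $A$, then $H$ (so that $N'K^{-H}$ is small), then $C$ and $n_0$; your weakening of $\mathcal{E}_2$ is therefore an unnecessary detour once $H$ is chosen after $N'$. The one point to make explicit is that $L$ must be enlarged together with $N'$: for fixed $L$ the factors with $i\gg K^{L}$ are essentially $1$, so the product over $N<i\le N'$ stays bounded away from $0$ as $N'\to\infty$. This is harmless, because each factor is nonincreasing in $L$, so the bound for the first window survives.
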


\begin{proof}
For $N<N'$, we introduce the following events:
    \begin{eqnarray*}
\mathcal{E}'_1=\mathcal{E}_1(\Tf,N',H,C,R_0) &:=& \{\forall i\neq j \le N', |X_i\wedge X_j|< |\Tf|/8\},\\
\mathcal{E}'_2=\mathcal{E}_2(\Tf,N',H,C,R_0) &:=& \{\forall i \le N', |X_i|> |\Tf|/2-H\},\\
\mathcal{E}'_3=\mathcal{E}_3(\Tf,N',H,C,R_0) &:=& \{\forall i \le N', \forall k \le C, X_i\notin B(w_k,|\Tf|/2+H)\},\\
\mathcal{E}'_4=\mathcal{E}'_4(\Tf,N,N',H,C,R_0) &:=&\{\forall i \le N, \mathcal{A}_i^c\}\cap  {\mathcal{E}'_1} \cap  {\mathcal{E}'_2} \cap  {\mathcal{E}'_3},\\
\mathcal{E}'_5=\mathcal{E}'_5(\Tf,N,N',H,C,R_0) &:=&\{\forall N<i \le N', \mathcal{A}_i^c\}\cap  {\mathcal{E}'_1} \cap  {\mathcal{E}'_2} \cap  {\mathcal{E}'_3}.
\end{eqnarray*}
Note that $\P(\mathcal{E}'_4)\le \P(\mathcal{E}_4)$. Moreover, arguing exactly as in the proof of Proposition \ref{prop:unsommetvide}, we obtain
\begin{eqnarray*}
\P(\mathcal{E}'_5)
&\le&
\left(1-\frac{N'}{K^{|\Tf|/2-1}}\right)^{-N'}
\E\left[
\prod_{i=N+1}^{N'}
\left(
1-
\left(
\sum_{h=0}^{L}\frac{1}{2K^h}
e^{-\sigma_i(K+1)K^{2-h}}
\right)
q_{A,C,|\Tf|}
\,1_{\{\sigma_i\le A K^{-2}\}}
\right)
\right],
\end{eqnarray*}
where
$
\sigma_i=\sum_{k=1}^{i}\varepsilon_k,
$
and $(\varepsilon_k)_{k\le N'}$ are i.i.d.\ exponential random variables with parameter $1$.
Let $\eta>0$. Choose $N$, $N'$, and $L$ with $N'>N$ such that
\begin{equation*}
\E\left(
\prod_{i=1}^{N}
\left(
1-\sum_{h=0}^{L}\frac{1}{2K^h}
e^{-\sigma_i(K+1)K^{2-h}}
\right)
\right)
\le \frac{\eta}{5},
\end{equation*}
and
\begin{equation*}
\E\left(
\prod_{i=N+1}^{N'}
\left(
1-\sum_{h=0}^{L}\frac{1}{2K^h}
e^{-\sigma_i(K+1)K^{2-h}}
\right)
\right)
\le \frac{\eta}{5}.
\end{equation*}

Finally, choosing $A$, $H$, and $C$ large enough exactly as in the proof of Proposition~\ref{prop:unsommetvide}, we obtain that, for this choice of parameters $(N,N',H,C)$, both \eqref{eq:prop_point1} and \eqref{eq:prop_point2} hold whenever $|\Tf|$ is large enough.

\end{proof}

\begin{proof}[Proof of Proposition \ref{prop:main}]
Let $H$, $C$, $N$, $N'$, and $n_0$ be chosen so that Proposition \ref{prop:deuxcas} holds. Then, for every $|\Tf|\ge n_0$, with probability at least $1-2\eta$, there exist indices $i<N<j$ such that both $\mathcal{A}_i$ and $\mathcal{A}_j$ occur. Recall that $X_1,X_2,\ldots$ denote the vertices of $\Tf\setminus\{o\}$ of height smaller than $|\Tf|/2$, ordered according to their coloring times. We have
$$
\P\bigl(\exists\, i<j\le N' :
\Tf_{X_i}\cap\Tf_{X_j}\neq\emptyset\bigr)
\le
N'^2\,\P\bigl(\Tf_{X_1}\cap\Tf_{X_2}\neq\emptyset\bigr)
\le
\frac{|\Tf|\,N'^2}{K^{|\Tf|/2-1}}.
$$
Therefore,
\[
\P\bigl(\forall\, i<j\le N',\ \Tf_{X_i}\cap\Tf_{X_j}=\emptyset\bigr)
\longrightarrow 1
\qquad\text{as } |\Tf|\to\infty.
\]
Combining this estimate with Proposition \ref{prop:deuxcas}, we obtain that, for $|\Tf|$ sufficiently large,
\begin{equation*}
\P\bigl(\exists\, i<j,\ \Tf_{X_i}\cap\Tf_{X_j}=\emptyset
\text{ and }
\mathcal{A}_i\cap\mathcal{A}_j\bigr)
\ge 1-3\eta.
\end{equation*}
This completes the proof of Proposition \ref{prop:main}.
\end{proof}

\section*{Acknowledgements}
The authors warmly thank Nicolas Broutin and Guillaume Chapuy for insightful discussions during the early stages of this work.

\bibliographystyle{plain}
\bibliography{bilbio}
\newpage

\end{document}